\numberwithin{equation}{section}
\newtheorem{theorem}{Theorem}[section]
\newtheorem{proposition}[theorem]{Proposition}
\newtheorem{corollary}[theorem]{Corollary}
\newtheorem{lemma}[theorem]{Lemma}
\newtheorem{remark}[theorem]{Remark}
\def\A{\mathcal{A}}
\def\NN{\Bbb N}
\def\ZZ{\mathbb Z}
\def\de{\delta}
\def\ZZ{\mathbb{Z}}
\renewcommand{\eqref}[1]{{\rm (\ref{#1})}}
\begin{document}

\title[Cluster multiplication theorem in the quantum cluster algebra of type $A_{2}^{(2)}$]
{Cluster multiplication theorem in the quantum cluster algebra of type $A_{2}^{(2)}$}

\author{Liqian Bai, Xueqing Chen, Ming Ding and Fan Xu}
\address{Department of Applied Mathematics, Northwestern Polytechnical University, Xi'an, Shaanxi 710072, P. R. China}
\email{bailiqian@nwpu.edu.cn (L.Bai)}
\address{Department of Mathematics,
 University of Wisconsin-Whitewater\\
800 W. Main Street, Whitewater, WI.53190. USA}
\email{chenx@uww.edu (X.Chen)}
\address{School of Mathematical Sciences and LPMC,
Nankai University, Tianjin, P. R. China}
\email{m-ding04@mails.tsinghua.edu.cn (M.Ding)}
\address{Department of Mathematical Sciences\\
Tsinghua University\\
Beijing 100084, P. R. China} \email{fanxu@mail.tsinghua.edu.cn(F.Xu)}

%\keywords{$\mathbb{Z}[q^{\pm \frac{1}{2}}]-$basis, quantum cluster
%algebra.}

%\date{April 10, 2010}
\thanks{Liqian Bai was supported by NPU (No. 3102017OQD033), Ming Ding was supported by NSF of China (No. 11771217) and Specialized Research Fund for the Doctoral Program of Higher Education (No. 20130031120004) and Fan Xu was supported by NSF of China (No. 11471177).}

%    General info
%\subjclass[2000]{Primary  16G20, 17B67; Secondary  17B35, 18E30}

%\date{\today}

\keywords{quantum cluster algebra, cluster multiplication theorem, positivity}

\maketitle

\begin{abstract}
The objective of the present paper is to prove cluster multiplication theorem in the quantum cluster algebra of type $A_{2}^{(2)}$. As corollaries, we obtain
bar-invariant $\mathbb{Z}[q^{\pm\frac{1}{2}}]$-bases  established in \cite{cds}, and naturally deduce the positivity of the elements in these
bases. One bar-invariant basis as the triangular basis of this quantum cluster algebra is also explicitly described.
\end{abstract}

%\tableofcontents

\section{Introduction}

The concept of cluster algebras was introduced by Fomin and Zelevinsky \cite{ca1}\cite{ca2} in order
to develop an algebraic framework for understanding total positivity and canonical bases in semisimple algebraic groups.
As a noncommutative analogue of cluster algebras, quantum cluster algebras were defined by Berenstein and Zelevinsky in ~\cite{BZ2005}.
Under the specialization $q=1,$ the quantum cluster algebras are degenerated to cluster algebras.

For the classical cluster algebras, Sherman and Zelevinsky \cite{SZ} firstly
gave the cluster multiplication formula in rank $2$ cluster algebras of finite and affine types.
For the general case,  the cluster categories were introduced in ~\cite{BMRRT} as the categorification of acyclic cluster algebras.
Cluster algebras have a close link to quiver representations via cluster categories.
The link is explicitly characterized by the Caldero-Chapoton map \cite{CC} and the
Caldero-Keller multiplication theorems \cite{CK}\cite{CK-2}. The
Caldero-Chapoton map associates the objects in the cluster
categories to some Laurent polynomials, in particular, sends indecomposable rigid
objects to cluster variables. The remarkable Caldero-Keller multiplication
theorems show the multiplication rules between images of objects
under the Caldero-Chapoton map.  For
simply laced Dynkin quivers, Caldero and Keller constructed a
cluster multiplication formula (of finite type) between  two
generalized cluster variables in \cite{CK}. On the
one hand, this multiplication is similar to the multiplication in a dual Hall algebra
and unifies homological and geometric properties of cluster
categories and combinatorial properties of cluster algebras. Since cluster algebras were introduced  in order to study
canonical bases, it is important to construct integral bases of
cluster algebras. In the cluster theory, the Caldero-Chapoton map and the Caldero-Keller cluster multiplication theorem open a new
way to construct cluster algebras from 2-Calabi-Yau categories and play a
very important role to obtain structural results such as bases with good properties,
positivity conjecture, denominator conjecture and so on ~\cite{CK}\cite{dx}\cite{DXX}.
The cluster multiplication formula of finite type was generalized to
affine type in \cite{Hubery2005} and to any type in \cite{XiaoXu}.  Palu \cite{Palu2} further extended the formula to
2-Calabi-Yau categories with cluster tilting objects.  In~\cite{DWZ}, the full generalization of the  Caldero-Chapoton map was obtained for quivers with potentials. Following this link, some good bases have been constructed for finite and affine cluster algebras \cite{CK}\cite{calzel}\cite{DXX}.

%Note that the link between acyclic cluster algebras and
%representation theory of quivers were firstly revealed in ~\cite{MRZ}.

It is natural to ask the question: what are the quantum analogue of the link? Recently,
Rupel \cite{rupel} defined a quantum version of the
Caldero-Chapoton map %called the quantum Caldero-Chapoton map
for the quantum cluster algebras over finite fields associated with valued acyclic quivers
and he conjectured that cluster variables could be expressed as images of
indecomposable rigid objects under the quantum Caldero-Chapoton
formula. A key ingredient of the conjecture is to confirm the
mutation rules between quantum cluster variables given by
\cite{BZ2005}. Most recently, the conjecture has been proved by Qin
\cite{fanqin} for acyclic equally valued quivers. There the author
constructed a quantum cluster multiplication formula and then
confirmed the mutation rules between quantum cluster
variables. Note that Qin verified the formula for the usual quantum cluster algebras through
the existence of counting polynomials instead of working over the finite field.
The quantum Caldero-Chapoton maps are further generalized in ~\cite{DMSS}\cite{Davison}.
In ~\cite{dx-2}, Ding and Xu proved a multiplication theorem
for acyclic quantum cluster algebras which
generalized the quantum cluster multiplication formula in
\cite{fanqin} and could be viewed as a quantum analogue of the
one-dimensional Caldero-Keller multiplication theorem discussed in \cite{CK-2}.
Compared to the role which the Caldero-Keller multiplication
theorems play for cluster algebras, the quantum multiplication theorem is
worthy of highlighting and also reflects the information and the
difficulty to prove the more general quantum analog of the
Caldero-Keller multiplication theorems. By using this multiplication theorem, it is not too difficult to
construct some good $\mathbb{ZP}$-bases in quantum cluster
algebras of finite and affine types.  By specializing $q$ and
coefficients to $1$, these bases induce the good bases for cluster
algebras of finite \cite{CK} and affine types \cite{DXX},
respectively.

One may expect to explicitly express the multiplication of two basis elements in terms of basis elements in quantum cluster algebras, i.e., to get structure constants clearly and explicitly. Ding and Xu ~\cite{dx} gave the cluster multiplication formula in the quantum cluster algebra of the Kronecker quiver.
By using the multiplication formula, they constructed bar-invariant bases of the quantum cluster algebra of Kronecker quiver as quantum analogues of the canonical basis, semicanonical basis and dual semicanonical basis of the corresponding cluster algebra. As a byproduct, they also proved positivity of the elements in these bases. In this paper, we construct a nontrivial cluster multiplication formula in the quantum cluster algebra of the non-simply-laced valued quiver $A_{2}^{(2)}$, which is parallel to the results obtained for Kornecker quiver but is not a trivial generalization. This formula yields some important properties of quantum cluster algebra of type $A_{2}^{(2)}$. For example, we construct three integral bases of this quantum cluster algebra.  It is worthy of highlighting that the basis $\mathcal{B}$ we obtained coincide with ``quantum greedy basis" (or ``quantum atomic basis", ``quantum theta basis") defined in ~\cite{LLRZ}. In general, quantum greedy basis does not have positive structure constants. But it does in this case following from the main result  Theorem ~\ref{theorem2} of the paper . Whether quantum greedy bases exist or not for general cases are still not known.
The basis $\mathcal{S}$ is closely related to the dual canonical basis of a quantum unipotent cell (subset of the dual canonical basis of the $U_q(n))$, when the valued quiver is attached some correct frozen variables. In the last section, we prove that $\mathcal{S}$ is the triangular basis in the sense of Qin~\cite{fanqin2}, or the triangular basis in the sense of Berenstein-Zelevinsky ~\cite{BZ2014}. Note that Qin~\cite{fanqin3} proved that both definitions of the triangular bases are equivalent for the seeds associated with
acyclic quivers and for the seeds associated with bipartite skew-symmetrizable matrices. The basis $\mathcal{D}$ is similar to the quantum generic basis (quantum dual semi-canonical basis) in ~\cite{KQ}. To construct explicit multiplication formula is still open for general cases.

\section{Preliminaries}

For the terminology related to quantum cluster algebras, one can refer to ~\cite{BZ2005} for more details, to quantum cluster algebra of type $A_2^{(2)}$, refer to ~\cite{cds}.

In this paper, we consider the valued quiver associated to the compatible pair $(\Lambda,B)$ where
\begin{equation*}
\Lambda=\left(
  \begin{array}{cc}
    0 & 1 \\
    -1 & 0 \\
  \end{array}
\right)
\text{~and~}
B=\left(
  \begin{array}{cc}
    0 & 1 \\
    -4 & 0 \\
  \end{array}
\right).
\end{equation*}
Note that $\Lambda^{T}B=\left(
  \begin{array}{cc}
    4 & 0 \\
    0 & 1 \\
  \end{array}
\right)$.

Now let $q$ denote the formal variable and $\mathcal{F}$ be the skew field of fractions of the \emph{quantum torus} $\mathcal{T}=\ZZ[q^{\pm \frac{1}{2}}]\langle X_1,X_2~|~X_1X_2=qX_2X_1\rangle$. The \emph{quantum cluster algebra} $\A_q(1,4)$ is the $\ZZ[q^{\pm \frac{1}{2}}]$-subalgebra of $\mathcal{F}$ generated by the cluster variables $X_k$ for $k\in \mathbb{Z}$, recursively defined by
$$
X_{k-1}X_{k+1}=\left\{
  \begin{aligned}
    q^{\frac{1}{2}}X_k+1,& ~\text{ if }~k~\text{is odd};  \\
    q^{2}X_{k}^{4}+1,& ~\text{ if }~k~\text{is even}.
  \end{aligned}
  \right.
$$

Note that $X_k\in\mathcal{T}$ by the well-known quantum Laurent phenomenon ~\cite{BZ2005}.
For each $(a,b)\in\ZZ^{2}$, if we set $X^{(a,b)}=q^{-\frac{1}{2}ab}X_{1}^{a}X_{2}^{b}$, then
$X^{(a,b)}X^{(c,d)}=q^{-\frac{1}{2}(bc-ad)}X^{(a+c,b+d)}$. The  $\mathbb{Z}$-linear bar-involution on the based quantum torus $\mathcal{T}$ is defined as follows:
$$\overline{q^{\frac{r}{2}}X^{(a,b)}}=q^{-\frac{r}{2}}X^{(a,b)}, \ \ \ \text{for any}~r,a \text{ and } b\in \mathbb{Z}.$$

In \cite{cds}, the authors constructed three kinds of bar-invariant $\ZZ[q^{\pm \frac{1}{2}}]$-bases of the quantum cluster algebra $\A_q(1,4)$ by using the standard monomials discussed in \cite{BZ2005}.  We now briefly recall some notations and results in ~\cite{cds}.

We define that
\begin{align*}
 X_\delta:=&X^{(-1,-2)}+X^{(-1,2)}+X^{(1,-2)}+(q^{-\frac{1}{2}}+q^{\frac{1}{2}})X^{(0,-2)} &\\
 =&qX_{0}^{2}X_3-q^{2}(qX_1+q^{-\frac{1}{2}}+q^{\frac{1}{2}})X_{2}^{2}.&
\end{align*}

Let
$$
 \begin{aligned}
&\mathcal{B}=\{q^{-\frac{1}{2}ab}X^{a}_{m}X^{b}_{m+1}~|~m\in\ZZ,(a,b)\in\ZZ^{2}_{\geq0}\}\cup \{F_{n}(X_\delta)\},\\
&\mathcal{S}=\{q^{-\frac{1}{2}ab}X^{a}_{m}X^{b}_{m+1}~|~m\in\ZZ,(a,b)\in\ZZ^{2}_{\geq0}\}\cup \{S_{n}(X_\delta)\},\\
&\mathcal{D}=\{q^{-\frac{1}{2}ab}X^{a}_{m}X^{b}_{m+1}~|~m\in\ZZ,(a,b)\in\ZZ^{2}_{\geq0}\}\cup \{X_{\delta}^{n}\},
  \end{aligned}
$$
where $F_{n}(x)$ and $S_{n}(x)$ are well-known Chebyshev polynomials defined by
  $$
  F_0(x)=1,F_1(x)=x, F_2(x)=x^2-2, F_{n+1}(x)=F_{n}(x)x-F_{n-1}(x)~\text{for}~n\geq2,
  $$
  $$
  S_0(x)=1,S_1(x)=x, S_2(x)=x^2-1, S_{n+1}(x)=S_{n}(x)x-S_{n-1}(x)~\text{for}~n\geq2,
  $$
and $F_n(x)=S_n(x)=0$ for $n<0$.
The homomorphism $\sigma_2:\A_{q}(1,4) \rightarrow \A_{q}(1,4)$ defined by $X_m\mapsto X_{m+2}$ and $q^{\pm\frac{1}{2}}\mapsto q^{\pm\frac{1}{2}}$ is an automorphism of $\A_q(1,4)$~\cite[Section 4]{cds}. Note that $\sigma_2 (X_\delta)=X_\delta$.

We can define a partial order $\leq$ on $\mathbb{Z}^2$ as follows:
$(r_1,r_2)\leq(s_1,s_2)$ if $r_1\leq s_1$ and $r_2\leq s_2$ for $(r_1,r_2), (s_1,s_2)\in \mathbb{Z}^2$. Moreover if there exists some $i \in \{1,2\}$ such that $r_i< s_i$, we will write $(r_1,r_2)<(s_1,s_2)$. In ~\cite{cds}, the authors showed that every element in $\{X_n$ $(n\in \mathbb{Z}\setminus \{1,2\}), F_{n}(X_\delta)(n\geq 1), S_{n}(X_\delta)(n\geq 1)\}$ has a minimal non-zero term $X^{(a,b)}$ according to the partial order $\leq$. The vector $(-a,-b)$ associated to this minimal non-zero term $X^{(a,b)}$ of the corresponding element will be called the denominator vector.  Then by using the standard monomials, they proved that $\mathcal{B}$, $\mathcal{S}$ and $\mathcal{D}$ are bar-invariant $\ZZ[q^{\pm \frac{1}{2}}]$-bases of the quantum cluster algebra $\A_q(1,4)$. Unfortunately, the structure constants and the positivity are not presented in this construction. This motivated our study for the multiplication formulas.

\section{Cluster multiplication theorem and positive bases}

In this section, we mainly prove the cluster multiplication theorem of the quantum cluster algebra $\A_q(1,4)$. Since the element $X_\delta$ stated in previous section plays a crucial importance in the cluster multiplication theorem, we firstly address another expression of this element.

\begin{lemma}\label{lem1}
In $\A_q(1,4)$, we have that
$
X_\delta=q^{-1}X_{4}^{2}X_1-q^{-2}(q^{-1}X_3+q^{-\frac{1}{2}}+q^{\frac{1}{2}})X_{2}^{2}.
$
\end{lemma}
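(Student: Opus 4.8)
The plan is to verify the identity by reducing the claimed right-hand side to an element of the quantum torus $\mathcal{T}$ and comparing it with the already-established monomial expansion $X_\delta = X^{(-1,-2)}+X^{(-1,2)}+X^{(1,-2)}+(q^{-\frac12}+q^{\frac12})X^{(0,-2)}$. First I would use the exchange relation at $k=2$, namely $X_1X_3 = q^2 X_2^4 + 1$, to write $X_3 = X_1^{-1}(q^2 X_2^4 + 1)$, and the exchange relation at $k=3$, namely $X_2 X_4 = q^{\frac12}X_3 + 1$, to write $X_4 = X_2^{-1}(q^{\frac12}X_3 + 1)$; substituting the former into the latter expresses both $X_3$ and $X_4$ purely in terms of $X_1$ and $X_2$. (By the quantum Laurent phenomenon these lie in $\mathcal{T}$, so no question of well-definedness arises.) Plugging these into $q^{-1}X_4^2 X_1 - q^{-2}(q^{-1}X_3 + q^{-\frac12}+q^{\frac12})X_2^2$ turns the right-hand side into a Laurent expression in $X_1,X_2$.

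Next I would normalise every summand to canonical form using only $X_1 X_2 = q X_2 X_1$ and its consequence $X_2^{-1}X_1^{a} = q^{a}X_1^{a}X_2^{-1}$, so that each term becomes $q^{r}X_1^{a}X_2^{b} = q^{\,r+\frac12 ab}X^{(a,b)}$. Collecting the resulting $\ZZ[q^{\pm\frac12}]$-linear combination of the $X^{(a,b)}$ and checking that everything except the four monomials $X^{(-1,-2)}, X^{(-1,2)}, X^{(1,-2)}, X^{(0,-2)}$ cancels — with the last appearing with coefficient $q^{-\frac12}+q^{\frac12}$ and the first three with coefficient $1$ — then finishes the proof. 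Equivalently, one may expand both the claimed right-hand side and the earlier expression $qX_0^2 X_3 - q^2(qX_1 + q^{-\frac12}+q^{\frac12})X_2^2$ into the $X^{(a,b)}$ and observe that they agree; comparing directly with the reduced form of $X_\delta$ is slightly more economical.

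The computation could also be organised conceptually: one checks, using $X_1X_3 = q^2X_2^4+1$ together with $X_1X_2=qX_2X_1$, that $X_2X_3 = qX_3X_2$, whence the assignment $X_m \mapsto X_{4-m}$, $q^{\pm\frac12}\mapsto q^{\pm\frac12}$ extends to a $\ZZ$-algebra anti-automorphism $\rho$ of $\mathcal{F}$ restricting to $\A_q(1,4)$, and applying $\rho$ to $X_\delta = qX_0^2 X_3 - q^2(qX_1 + q^{-\frac12}+q^{\frac12})X_2^2$ yields exactly the claimed expression once one knows $\rho(X_\delta)=X_\delta$. However, proving this last invariance costs essentially the same torus computation, so it is not a genuine shortcut, and I would carry out the direct substitution instead. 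The only real obstacle is bookkeeping: substituting $X_4 = X_2^{-1}\bigl(q^{\frac52}X_1^{-1}X_2^4 + q^{\frac12}X_1^{-1}+1\bigr)$ and squaring produces on the order of a dozen terms, and one must track the powers of $q$ carefully when moving the $X_1$'s past the $X_2$'s so that all non-leading contributions cancel pairwise. There is no structural difficulty here; the risk is purely an arithmetic slip in the $q$-exponents, best controlled by reducing each summand to the form $q^{r}X^{(a,b)}$ before summing.
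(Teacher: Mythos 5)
Your plan is correct and is essentially the paper's own argument: the paper likewise verifies the identity by substituting the Laurent expansions $X_3=X^{(-1,4)}+X^{(-1,0)}$ and $X_4=X^{(-1,3)}+X^{(-1,-1)}+X^{(0,-1)}$ (which are exactly what your exchange-relation substitutions produce after normalisation) into the right-hand side and checking that the quantum-torus monomials collapse to $X^{(-1,-2)}+X^{(-1,2)}+X^{(1,-2)}+(q^{-\frac{1}{2}}+q^{\frac{1}{2}})X^{(0,-2)}=X_\delta$. The only cosmetic difference is that you re-derive these expansions from the exchange relations rather than quoting them, and you correctly judge that the anti-automorphism detour is not a genuine shortcut.
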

\begin{proof}
Note that $X_3=X^{(-1,4)}+X^{(-1,0)}$ and $X_4=X^{(-1,3)}+X^{(-1,-1)}+X^{(0,-1)}$, then we have that
\begin{align*}
&q^{-1}X_{4}^{2}X_1-q^{-2}(q^{-1}X_3+q^{-\frac{1}{2}}+q^{\frac{1}{2}})X_{2}^{2}\\
=& q^{-1}(X^{(-1,3)}+X^{(-1,-1)}+X^{(0,-1)})^{2}X^{(1,0)} -(q^{-3}(X_3+q^{\frac{1}{2}}+q^{\frac{3}{2}})X_{2}^{2}) \\
=&q^{-4}X^{(-1,6)}+X^{(-1,-2)}+X^{(1,-2)}+(q^{-\frac{1}{2}}+q^{\frac{1}{2}})q^{-2}X^{(0,2)}\\
 &+(q^{-2}+q^{2})q^{-2}X^{(-1,2)} +(q^{-\frac{1}{2}}+q^{\frac{1}{2}})X^{(0,-2)}\\
 &-(q^{-4}X^{(-1,6)}+q^{-4}X^{(-1,2)}+(q^{-\frac{5}{2}}+q^{-\frac{3}{2}})X^{(0,2)})\\
=&X^{(-1,-2)}+X^{(1,-2)}+X^{(-1,2)}+ (q^{-\frac{1}{2}}+q^{\frac{1}{2}})X^{(0,-2)}= X_\delta.
\end{align*}
%\begin{align*}
%&q^{-1}X_{4}^{2}X_1 =q^{-1}(X^{(-1,3)}+X^{(-1,-1)}+X^{(0,-1)})^{2}X^{(1,0)}\\
%=&q^{-4}X^{(-1,6)}+X^{(-1,-2)}+X^{(1,-2)}+(q^{-\frac{1}{2}}+q^{\frac{1}{2}})q^{-2}X^{(0,2)}\\
% &+(q^{-2}+q^{2})q^{-2}X^{(-1,2)} +(q^{-\frac{1}{2}}+q^{\frac{1}{2}})X^{(0,-2)}&
%\end{align*}
%and
%\begin{align*}
%&q^{-2}(q^{-1}X_3+q^{-\frac{1}{2}}+q^{\frac{1}{2}})X_{2}^{2}= q^{-3}(X_3+q^{\frac{1}{2}}+q^{\frac{3}{2}})X_{2}^{2}\\
%=&q^{-4}X^{(-1,6)}+q^{-4}X^{(-1,2)}+(q^{-\frac{5}{2}}+q^{-\frac{3}{2}})X^{(0,2)}.
%\end{align*}

%It follows that
%\begin{align*}
%&q^{-1}X_{4}^{2}X_1-q^{-2}(q^{-1}X_3+q^{-\frac{1}{2}}+q^{\frac{1}{2}})X_{2}^{2}\\
%=&X^{(-1,-2)}+X^{(1,-2)}+X^{(-1,2)}+ (q^{-\frac{1}{2}}+q^{\frac{1}{2}})X^{(0,-2)}= X_\delta.
%\end{align*}
\end{proof}

The following proposition is a special case discussed in \cite{BCDX}, here we give an alternative proof by using the above lemma.
\begin{proposition}\label{prop-gene}
The quantum cluster algebra $\A_{q}(1,4)$ is the $\ZZ[q^{\pm\frac{1}{2}}]$-algebra generated by $\{X_m,X_{m+1},X_{m+2},X_{m+3}\}$ for any $m\in\ZZ$.
\end{proposition}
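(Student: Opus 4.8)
The plan is to prove that every cluster variable $X_k$ lies in the $\ZZ[q^{\pm\frac12}]$-subalgebra $R_m:=\langle X_m,X_{m+1},X_{m+2},X_{m+3}\rangle$ of $\A_q(1,4)$; since $\A_q(1,4)$ is generated by the $X_k$, this forces $R_m=\A_q(1,4)$. It is convenient to reduce this to the two ``window-extension'' statements
$$
\mathrm{(A)}\ \ X_{m+4}\in R_m,\qquad\qquad\mathrm{(B)}\ \ X_{m-1}\in R_m\qquad(m\in\ZZ):
$$
(A) shows $R_{m+1}\subseteq R_m$ and (B) shows $R_{m-1}\subseteq R_m$ for every $m$, so all the $R_m$ coincide, and their common value contains every $X_k$. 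Since $\sigma_2$ is an automorphism with $\sigma_2(R_m)=R_{m+2}$, it then suffices to check (A) and (B) for $m=1$ and for $m=2$.

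For the even parities I would first record the two closed formulas
$$
X_4=q^{\frac12}X_0X_3-q^2X_2^3,\qquad\qquad X_0=q^{\frac12}X_1X_4-q^2X_2^3,
$$
each of which is verified by a direct expansion in the basis $\{X^{(a,b)}\}$, in the style of the proof of Lemma~\ref{lem1} (using $X_0=X^{(1,-1)}+X^{(0,-1)}$, $X_3=X^{(-1,4)}+X^{(-1,0)}$ and $X_4=X^{(-1,3)}+X^{(-1,-1)}+X^{(0,-1)}$). Applying powers of $\sigma_2$ to the first formula gives $X_{m+4}\in R_m$ for all even $m$, and to the second gives $X_{m-1}\in R_m$ for all odd $m$; in particular (A) holds for $m=2$ and (B) for $m=1$.

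For the odd parities I would use the element $X_\delta$. By Lemma~\ref{lem1}, $X_\delta\in R_1$; on the other hand, applying $\sigma_2$ to the defining expression $X_\delta=qX_0^2X_3-q^2(qX_1+q^{-\frac12}+q^{\frac12})X_2^2$ gives $X_\delta=qX_2^2X_5-q^2(qX_3+q^{-\frac12}+q^{\frac12})X_4^2$, so that $qX_2^2X_5=X_\delta+q^2(qX_3+q^{-\frac12}+q^{\frac12})X_4^2\in R_1$ and hence $X_2^2X_5\in R_1$. Feeding in the exchange relations $X_1X_3=q^2X_2^4+1$ and $X_3X_5=q^2X_4^4+1$, and using that $X_2^4X_5=X_2^2\cdot(X_2^2X_5)$ with no reordering, one computes $X_2^4X_5=q^{-2}(X_1X_3-1)X_5=q^{-2}\bigl(X_1(q^2X_4^4+1)-X_5\bigr)$ and solves
$$
X_5=q^2X_1X_4^4+X_1-q^2\,X_2^2\,(X_2^2X_5)\ \in\ R_1,
$$
which is (A) for $m=1$. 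An entirely parallel computation in $R_2$ — using Lemma~\ref{lem1} to get $X_4^2X_1\in R_2$ and the bar-conjugated relations $X_3X_1=q^{-2}X_2^4+1$, $X_5X_3=q^{-2}X_4^4+1$ — yields $X_1=q^{-2}X_5X_2^4+X_5-q^{-2}X_4^2\,(X_4^2X_1)\in R_2$, which is (B) for $m=2$. Together with the even case this establishes (A) and (B) for all $m$, and the proposition follows.

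The step I expect to be the main obstacle is the ``isolation'' move in the odd case: the $X_\delta$-identity only puts $X_2^2X_5$ (respectively $X_4^2X_1$) in the subalgebra, not $X_5$ (respectively $X_1$) itself, and one must check that multiplying by $X_2^2$ and substituting the two even-indexed exchange relations produces exactly the cancellation leaving a single copy of $X_5$. Keeping track of the powers of $q$ throughout — and noticing that $X_2^4X_5$ is literally $X_2^2\cdot(X_2^2X_5)$ — is where the care is needed; verifying the two closed formulas of the even case is a shorter but equally explicit monomial computation.
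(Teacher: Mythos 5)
Your argument is correct, and all the identities you use check out: $X_4=q^{\frac12}X_0X_3-q^2X_2^3$ and $X_0=q^{\frac12}X_1X_4-q^2X_2^3$ follow from the expansions $X_0=X^{(1,-1)}+X^{(0,-1)}$, $X_3=X^{(-1,4)}+X^{(-1,0)}$, $X_4=X^{(-1,3)}+X^{(-1,-1)}+X^{(0,-1)}$, the relations $X_3X_1=q^{-2}X_2^4+1$ and $X_5X_3=q^{-2}X_4^4+1$ do follow from the exchange relations by applying the bar anti-involution, and the isolation step $X_5=q^2X_1X_4^4+X_1-q^2X_2^2(X_2^2X_5)$ is a correct consequence. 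However, your route differs from the paper's in the propagation mechanism. The paper also starts by placing $X_\delta$ in every window $R_m=\ZZ[q^{\pm\frac12}]\langle X_m,\dots,X_{m+3}\rangle$ via Lemma~\ref{lem1} and $\sigma_2$, but then it invokes the identity $X_{2n}X_\delta=q^{-\frac12}X_{2n-2}+q^{\frac12}X_{2n+2}$ (quoted from \cite[Proposition 4.2]{cds}) to sweep up all even-indexed cluster variables in both directions at once, and finally extracts the odd-indexed ones from the odd exchange relation $X_{2n-2}X_{2n}=q^{\frac12}X_{2n-1}+1$. You instead prove the two even-parity window shifts by closed monomial formulas verified directly in the quantum torus, and handle the odd-parity shifts by combining the two expressions for $X_\delta$ with the even exchange relations to solve for $X_5$ and $X_1$. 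The paper's version is shorter because it can cite the $X_{2n}X_\delta$ recursion; yours is self-contained apart from the Laurent expansions already used in Lemma~\ref{lem1}, at the cost of two extra explicit computations. One presentational remark: in the parallel computation for $X_1\in R_2$ you should state explicitly that $X_\delta\in R_2$ (it is immediate from your displayed $\sigma_2$-shifted expression $X_\delta=qX_2^2X_5-q^2(qX_3+q^{-\frac12}+q^{\frac12})X_4^2$, but it is the hypothesis needed to read off $X_4^2X_1\in R_2$ from Lemma~\ref{lem1}).
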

\begin{proof}
By the definition of $X_\delta$, we know that $X_\delta\in\ZZ[q^{\pm\frac{1}{2}}]\langle X_0,X_1,X_2,X_3\rangle$. We have that $X_\delta\in\ZZ[q^{\pm\frac{1}{2}}]\langle X_1,X_2,X_3,X_4\rangle$ by Lemma \ref{lem1}. Then through the automorphism $\sigma_2$, we can deduce that $X_\delta\in\ZZ[q^{\pm\frac{1}{2}}]\langle X_m,X_{m+1},X_{m+2},X_{m+3}\rangle$ for any $m\in\ZZ$.

Note that for any $n\in\ZZ$, we have that $X_{2n}X_\delta=q^{-\frac{1}{2}}X_{2n-2}+q^{\frac{1}{2}}X_{2n+2}$ (see \cite[Proposition 4.2]{cds}). Then we can deduce that $X_{2n}\in\ZZ[q^{\pm\frac{1}{2}}]\langle X_m,X_{m+1},X_{m+2},X_{m+3}\rangle$ for any $n\in\ZZ$. Since $X_{2n-2}X_{2n}=q^{\frac{1}{2}}X_{2n-1}+1$, we obtain that all  cluster variables belong to $\ZZ[q^{\pm\frac{1}{2}}]\langle X_m,X_{m+1},X_{m+2},X_{m+3}\rangle$. Thus
$
\A_q(1,4)= \ZZ[q^{\pm\frac{1}{2}}]\langle X_m,X_{m+1},X_{m+2},X_{m+3}\rangle.
$
\end{proof}

For each $n\in\ZZ$, we denote by
\begin{align*}
\langle n\rangle=\left\{
\begin{aligned}
1,&~\text{if~}n~\text{is~odd};\\
2,&~\text{if~}n~\text{is~even}.
\end{aligned}
\right.
\end{align*}

Let $x\in\mathbb{R}$, we have the floor function $\lfloor x\rfloor:=\text{max}\{m\in\ZZ~|~m\leq x\}$ and the ceiling function $\lceil x\rceil:=\text{min}\{m\in\ZZ~|~m\geq x\}$.

For any $m>n\geq1$, it is easy to show that (see \cite[Proposition 4.2]{cds}):
$$
F_n(X_\delta)F_m(X_\delta)=F_{m+n}(X_\delta)+F_{m-n}(X_\delta)~~
\text{ and }
~~F_n(X_\delta)F_n(X_\delta)=F_{2n}(X_\delta)+2.
$$
The following cluster  multiplication theorem is the main result of the present paper.
\begin{theorem}\label{theorem2}
In $\A_q(1,4)$, we have that
\begin{enumerate}
\item \begin{enumerate}
\item[(i)]if $m$ is even and $n$ is positive, then
\begin{equation}\label{equation1}
X_{m}F_n(X_{\delta})=q^{-\frac{n}{2}}X_{m-2n}+q^{\frac{n}{2}}X_{m+2n}.
\end{equation}
\item[(ii)]if $m$ is odd and $n$ is positive, then
\begin{align}\label{equation2}
& X_mF_n(X_\delta)\\=&q^{-n}X_{m-n}^{\langle m-n\rangle}+q^{n}X_{m+n}^{\langle m+n\rangle}\nonumber
+\sum\limits_{k\geq1} (\sum\limits_{l=1}^{k} (q^{-\frac{4l-1}{2}}+q^{-\frac{4l-3}{2}}+q^{\frac{4l-3}{2}}+q^{\frac{4l-1}{2}}))F_{n-2k}(X_{\de}).
\end{align}
\end{enumerate}
\item if $m$ is even and $n$ is positive, then
\begin{equation}\label{equation3}
X_mX_{m+2n}=q^{\frac{n}{2}}X_{m+n}^{\langle m+n\rangle}+\sum\limits_{k\geq1}\big(\sum\limits_{l=1}^{2k-1}q^{-\frac{n+1}{2}+l}\big)F_{n-2k+1}(X_{\delta}).
\end{equation}
\item if $m$ is even and $n$ is positive odd, then
\begin{align}\label{equation4}
\nonumber
&X_{m-n}X_{m}\\=&\sum\limits_{1<2k<n}\big(\sum\limits_{l=1}^{\text{min}(4k,n-2k)}q^{-\frac{1}{2}-k+l}\big)X_{m-4k}
+\left\{
\begin{aligned}
q^{\frac{n}{2}}X^{3}_{m-\frac{2}{3}n},{\hskip 1.9cm}~~~~~~~~~ &~~ n\equiv 0~(\rm{mod}~3) \\
q^{\frac{n-1}{2}}X_{\lfloor m-\frac{2}{3}n\rfloor}X_{\lceil m-\frac{2}{3}n\rceil},&~~ \rm{otherwise,}
\end{aligned}
\right.
\end{align}
and
\begin{align}\label{equation5}
\nonumber
& X_{m+n}X_{m}\\=&\sum\limits_{1<2k<n}\big(\sum\limits_{l=1}^{\text{min}(4k,n-2k)}q^{\frac{1}{2}+k-l}\big)X_{m+4k}
+\left\{
\begin{aligned}
q^{-\frac{n}{2}}X^{3}_{m+\frac{2}{3}n},{\hskip 2.0cm}& ~n\equiv 0~(\rm{mod}~3)~~~~~~~ \\
q^{-\frac{n+1}{2}}X_{\lfloor m+\frac{2}{3}n\rfloor}X_{\lceil m+\frac{2}{3}n\rceil},&~\rm{otherwise.}
\end{aligned}
\right.
\end{align}
\item
if $m$ is odd and $n$ is positive, then
\begin{align}\label{equation6}
\nonumber
& X_mX_{m+2n}\\=&q^{2n}X_{m+n}^{2\langle m+n\rangle}+\sum\limits_{k=1}^{n-1}(\sum\limits\limits_{l=1}^{4\text{min}(k,n-k)}q^{-\frac{1}{2}+l}) X_{m+2n-2k}
+\sum\limits_{k\geq1}c_{n,k}F_{2n-2k}(X_{\delta}),
\end{align}
where
\begin{align*}
c_{n,k}=\sum\limits_{i=1}^{k}a_i(q^{-2(n-i)-1}+q^{4k-2(n+i)+1})+ \sum\limits_{i=1}^{k-1}b_i(q^{-2(n-i)}+q^{4k-2(n+i)}) +b_kq^{-2(n-k)}
\end{align*}
and $a_j=\frac{j(j-1)}{2}$, $b_j=\frac{j(j-1)}{2}+\lceil \frac{j}{2}\rceil$ for positive integer $j$.
\end{enumerate}
\end{theorem}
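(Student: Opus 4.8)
The plan is to prove the five families $(\ref{equation1})$--$(\ref{equation6})$ by induction on $n$, in an order that avoids circularity: first one checks the small-$n$ instances of $(\ref{equation3})$ directly --- for instance $X_mX_{m+2}=q^{\frac12}X_{m+1}+1$ is the mutation relation itself, and $X_mX_{m+4}=qX_{m+2}^2+q^{-\frac12}X_\delta$ follows from a short computation --- then establishes $(\ref{equation1})$ and $(\ref{equation2})$, then $(\ref{equation3})$ in full, and finally $(\ref{equation4})$, $(\ref{equation5})$, $(\ref{equation6})$. Every base case is verified by an explicit calculation in the quantum torus $\mathcal{T}$: one writes each of $X_0,X_1,X_2,X_3,X_4$ as a $\ZZ[q^{\pm\frac12}]$-combination of the monomials $X^{(a,b)}$ (as is already done for $X_3,X_4$ in the proof of Lemma~\ref{lem1}, and with $X_0=X^{(1,-1)}+X^{(0,-1)}$), substitutes the expansion of $X_\delta$, and multiplies out; the automorphism $\sigma_2$, which fixes $X_\delta$ and sends $X_k\mapsto X_{k+2}$, then propagates each identity to all $m$ of the given parity.

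The inductive steps rest on a small toolkit: the Chebyshev recursion $F_{n+1}(X_\delta)=F_n(X_\delta)X_\delta-F_{n-1}(X_\delta)$ for $n\ge 2$, with the boundary identities $F_1(X_\delta)X_\delta=F_2(X_\delta)+2$ and $F_0(X_\delta)X_\delta=F_1(X_\delta)$, and the product rule $F_a(X_\delta)F_b(X_\delta)=F_{a+b}(X_\delta)+F_{|a-b|}(X_\delta)$; the relation $X_{2a}X_\delta=q^{-\frac12}X_{2a-2}+q^{\frac12}X_{2a+2}$ of \cite[Proposition~4.2]{cds} together with its bar-involution image $X_\delta X_{2a}=q^{\frac12}X_{2a-2}+q^{-\frac12}X_{2a+2}$; the derived relation $X_{2a}^2X_\delta=q^{-1}X_{2a-1}+qX_{2a+1}+q^{\frac12}+q^{-\frac12}$, which follows from these together with $X_{2a-2}X_{2a}=q^{\frac12}X_{2a-1}+1$ and its bar image $X_{2a}X_{2a-2}=q^{-\frac12}X_{2a-1}+1$; and the bar-involution itself, used to reverse the order of a product $X_iX_j$. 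Granting these, $(\ref{equation1})$ follows by a straightforward induction --- multiply the inductive hypothesis on the right by $X_\delta$ and apply the relation for $X_{2a}X_\delta$. For $(\ref{equation2})$ one first establishes the case $n=1$, namely $X_mX_\delta=q^{-1}X_{m-1}^2+qX_{m+1}^2$ for $m$ odd, by substituting $X_m=q^{-\frac12}(X_{m-1}X_{m+1}-1)$, applying $X_{m+1}X_\delta=q^{-\frac12}X_{m-1}+q^{\frac12}X_{m+3}$, and invoking the distance-$4$ case of $(\ref{equation3})$; the general case then follows by induction from $X_mF_{n+1}(X_\delta)=\big(X_mF_n(X_\delta)\big)X_\delta-X_mF_{n-1}(X_\delta)$.

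For $(\ref{equation3})$ one induces on $n$ via $X_mX_{m+2n}=q^{-\frac12}\big(X_mX_{m+2n-2}\big)X_\delta-q^{-1}\big(X_mX_{m+2n-4}\big)$, a consequence of $X_{m+2n-2}X_\delta=q^{-\frac12}X_{m+2n-4}+q^{\frac12}X_{m+2n}$; both products on the right are supplied by the inductive hypothesis (the degenerate cases being $X_m^2$ and the bar image of the distance-$2$ instance of $(\ref{equation3})$), and the induction closes after multiplying the hypothesis on the right by $X_\delta$. Identities $(\ref{equation4})$ and $(\ref{equation5})$ are then obtained by expressing the odd-index cluster variable through a mutation relation and reducing to $(\ref{equation3})$, with the bar-involution used to interchange factors, and $(\ref{equation6})$ reduces in the same spirit to $(\ref{equation2})$ and $(\ref{equation3})$.

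The genuine difficulty is not the algebra but the bookkeeping of coefficients: at each inductive step one must verify that the explicit $q$-power sums on the right-hand sides reproduce themselves under the recursion just produced. Thus one checks that $\sum_{l=1}^{2k-1}q^{-\frac{n+1}{2}+l}$ transforms correctly as $n$ increases in $(\ref{equation3})$; that the truncated sums $\sum_{l=1}^{\min(4k,\,n-2k)}q^{-\frac12-k+l}$ and the trichotomy $n\equiv 0\pmod 3$ versus otherwise in $(\ref{equation4})$--$(\ref{equation5})$ together produce exactly the cubic terms $X^3_{m\mp\frac23 n}$ or the split $X_{\lfloor\cdot\rfloor}X_{\lceil\cdot\rceil}$; and that the coefficients $c_{n,k}$, assembled from $a_j=\frac{j(j-1)}{2}$ and $b_j=\frac{j(j-1)}{2}+\lceil\frac j2\rceil$, satisfy the recursion forced by the induction in $(\ref{equation6})$. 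In every case the point is that the ``overflow'' contributions --- the parts of $X_{2a}^2X_\delta$, of $F_1(X_\delta)X_\delta=F_2(X_\delta)+2$, and of $F_k(X_\delta)X_\delta$ lying outside the ranges already accounted for --- are absorbed exactly into the leading cluster-variable terms and into the shift of the $F$-coefficients, so that no spurious terms survive. One must also carry the parity case-splits (the exponents $\langle m\pm n\rangle$ and $2\langle m+n\rangle$) through every computation and keep careful track of the order of factors, since $X_\delta$ is not central: $X_{2a}X_\delta$ and $X_\delta X_{2a}$ differ precisely by interchanging $q^{\frac12}$ and $q^{-\frac12}$.
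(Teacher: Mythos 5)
Your overall strategy for \eqref{equation1}--\eqref{equation3} is the paper's: reduce to a single $m$ of each parity via $\sigma_2$, verify the base cases by explicit computation in the quantum torus, and induct on $n$ through the Chebyshev recursion together with $X_{2a}X_\delta=q^{-\frac12}X_{2a-2}+q^{\frac12}X_{2a+2}$ and $X_{2a}^2X_\delta=q^{-1}X_{2a-1}+qX_{2a+1}+q^{\frac12}+q^{-\frac12}$. The gap is that the content of the theorem lies precisely in the step you defer with ``one checks that the coefficients reproduce themselves.'' For \eqref{equation6} the induction forces the closed identity
\begin{equation*}
q^{2n-4}+q^{2n}+\sum\limits_{k=1}^{n-1}\sum\limits_{l=1}^{4\min(k,n-k)}(q^{-4+l}+q^{-3+l}+q^{-2+l}+q^{-1+l})+c_{n+1,n-1}=c_{n+1,n+1},
\end{equation*}
and verifying it is a nontrivial coefficient-by-coefficient comparison (split by the sign and the residue mod $4$ of the exponent, matching against $a_j=\frac{j(j-1)}{2}$ and $b_j=\frac{j(j-1)}{2}+\lceil\frac j2\rceil$) that occupies the bulk of the paper's argument; likewise, for \eqref{equation4}--\eqref{equation5} the induction only closes after a careful analysis of where the truncation $\min(4k,n-2k)$ switches branches, which depends on $\lfloor\frac{n\pm2}{6}\rfloor$ and $\lceil\frac{n\pm2}{6}\rceil$ in each residue class of $n$ mod $3$. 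Asserting that the overflow terms ``are absorbed exactly'' is the claim to be proved, not a proof of it.

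There is also a concrete problem with your proposed route for \eqref{equation4}--\eqref{equation5}. Writing the odd-index factor as $X_1=q^{-\frac12}(X_0X_2-1)$ and applying \eqref{equation3} to $X_2X_{1+n}$ leaves you with $X_0$ multiplied against $X_{\frac{n+3}{2}}^{\langle\frac{n+3}{2}\rangle}$; when $\frac{n+3}{2}$ is odd this is again a mixed odd--even product of the very type you are trying to compute, and when it is even you face $X_0X_{\text{even}}^2$, which is not an instance of \eqref{equation3} either. So the reduction does not visibly terminate. The paper avoids this by running a self-contained induction on $n$ in steps of $2$ inside \eqref{equation4} itself, via $X_1X_{n+3}=q^{-\frac12}X_1X_{n+1}X_\delta-q^{-1}X_1X_{n-1}$, handling the three residues of $n$ mod $3$ separately; if you want to keep your reduction you must supply the additional multiplication formulas it generates, or switch to the direct induction.
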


\begin{proof}
(1) In order to prove (\ref{equation1}), it suffices to show that
$$
X_2F_n(X_\delta)=q^{-\frac{n}{2}}X_{2-2n}+q^{\frac{n}{2}}X_{2+2n}.
$$
We will prove the claim by induction on $n$.  When $n=1$, it follows from \cite[Proposition 4.2]{cds}. When $n=2$, we have that
\begin{align*}
&X_2F_2(X_\delta)=X_2(X_{\delta}^2-2)=q^{-\frac{1}{2}}X_0X_\delta+q^{\frac{1}{2}}X_4X_\delta-2X_2\\
=&(q^{-1}X_{-2}+X_2)+(X_2+qX_6)-2X_2 =q^{-1}X_{-2}+qX_6.
\end{align*}
Assume that $X_2F_n(X_\delta)=q^{-\frac{n}{2}}X_{2-2n}+q^{\frac{n}{2}}X_{2+2n}$ for $n\geq2$. Then
\begin{align*}
&X_2F_{n+1}(X_\delta)=X_2F_n(X_\delta)X_\delta-X_2F_{n-1}(X_\delta)\\
=&q^{-\frac{n+1}{2}}X_{-2n}+q^{-\frac{n-1}{2}}X_{4-2n}+q^{\frac{n-1}{2}}X_{2n}+q^{\frac{n+1}{2}}X_{4+2n} -q^{-\frac{n-1}{2}}X_{4-2n}-q^{\frac{n-1}{2}}X_{2n}\\
=&q^{-\frac{n+1}{2}}X_{-2n}+q^{\frac{n+1}{2}}X_{4+2n}.
\end{align*}
To prove (\ref{equation2}), it suffices to show that
\begin{align}\label{equ2}
&X_1F_n(X_\delta)\\
=&q^{-n}X_{1-n}^{\langle 1-n\rangle}+q^{n}X_{1+n}^{\langle 1+n\rangle}\nonumber+\sum\limits_{k\geq1} \big(\sum\limits_{l=1}^{k} (q^{-\frac{4l-1}{2}}+q^{-\frac{4l-3}{2}}+q^{\frac{4l-3}{2}}+q^{\frac{4l-1}{2}})\big)F_{n-2k}(X_{\de}).
\end{align}
When $n=1$, we have that $X_1X_\delta=q^{-1}X_0^2+qX_2^2$ by \cite[Proposition 4.2]{cds}. When $n=2$, we have that
\begin{align*}
&X_1F_2(X_\delta)=X_1(X_\delta^2-2)=(q^{-1}X_0^2+qX_{2}^{2})X_\delta-2X_1\\
\pagebreak
=&q^{-\frac{3}{2}}(q^{-\frac{1}{2}}X_{-1}+1)+q^{-\frac{1}{2}}(q^{\frac{1}{2}}X_{1}+1)+ q^{\frac{1}{2}}(q^{-\frac{1}{2}}X_{1}+1)+q^{\frac{3}{2}}(q^{\frac{1}{2}}X_{3}+1)-2X_1\\
=&q^{-2}X_{-1}+q^{2}X_3+(q^{-\frac{3}{2}}+q^{-\frac{1}{2}}+q^{\frac{1}{2}}+q^{\frac{3}{2}}).
\end{align*}
Assume that (\ref{equ2}) is true. Note that $X_1F_{n+1}(X_\delta)=X_1F_n(X_\delta)X_\delta-X_1F_{n-1}(X_\delta)$. When $n$ is even,
\begin{align*}
&X_1F_{n+1}(X_\delta)\\
=&[q^{-n}X_{1-n}+q^{n}X_{1+n}+\sum\limits_{k\geq1}(\sum\limits_{l=1}^{k}(q^{-\frac{4l-1}{2}}+q^{-\frac{4l-3}{2}} +q^{\frac{4l-3}{2}}+q^{\frac{4l-1}{2}})F_{n-2k}(X_\delta))]X_\delta\\
&-q^{1-n}X^{2}_{2-n}-q^{n-1}X_{n}^{2}-\sum\limits_{k\geq1}(\sum\limits_{l=1}^{k}(q^{-\frac{4l-1}{2}}+q^{-\frac{4l-3}{2}} +q^{\frac{4l-3}{2}}+q^{\frac{4l-1}{2}})F_{n-1-2k}(X_\delta))\\
=&q^{-n-1}X_{-n}^{2}+q^{n+1}X^{2}_{n+2}+\sum\limits_{k=1}^{\frac{n}{2}-1} \sum\limits_{l=1}^{k}(q^{-\frac{4l-1}{2}}+q^{-\frac{4l-3}{2}} +q^{\frac{4l-3}{2}}+q^{\frac{4l-1}{2}})F_{n+1-2k}(X_\delta)\\
&+\sum\limits_{l=1}^{\frac{n}{2}}(q^{-\frac{4l-1}{2}}+q^{-\frac{4l-3}{2}} +q^{\frac{4l-3}{2}}+q^{\frac{4l-1}{2}})X_\delta \\
=&q^{-n-1}X^{2}_{-n}+q^{n+1}X_{n+2}^{2}+\sum\limits_{k=1}^{\frac{n}{2}}\sum\limits_{l=1}^{k} (q^{-\frac{4l-1}{2}}+q^{-\frac{4l-3}{2}} +q^{\frac{4l-3}{2}}+q^{\frac{4l-1}{2}})F_{n+1-2k}(X_\delta).
\end{align*}

The proof for the odd $n$ is similar.

(2) For $n\geq0$, it suffices to show that
\begin{equation}\label{equ3}
X_2X_{2+2n}=q^{\frac{n}{2}}X_{2+n}^{<2+n>}+\sum\limits_{k\geq1}(\sum\limits_{l=1}^{2k-1}q^{-\frac{n+1}{2}+l}) F_{n-2k+1}(X_\delta).
\end{equation}

When $n=1$, it is the exchange relation. When $n=2$, we have that
\begin{align*}
X_2X_6
=&q^{-\frac{1}{2}}X_2X_4X_\delta-q^{-1}X_{2}^{2}=q^{-\frac{1}{2}}(q^{\frac{1}{2}}X_3+1)X_\delta-q^{-1}X_{2}^{2}\\
=&X_3X_\delta+q^{-\frac{1}{2}}X_\delta-q^{-1}X_{2}^{2}=qX_{4}^{2}+q^{-\frac{1}{2}}X_\delta.
\end{align*}
Assume that (\ref{equ3}) is true. Now we calculate $X_2X_{4+2n}$. Note that
$$
X_{2+2n}X_\delta=q^{-\frac{1}{2}}X_{2n}+q^{\frac{1}{2}}X_{2n+4},$$
we have $X_{2n+4}=q^{-\frac{1}{2}}X_{2+2n}X_\delta-q^{-1}X_{2n}$.

When $n$ is even, it follows that
\begin{align*}
&X_2X_{4+2n}
=q^{-\frac{1}{2}}X_{2}X_{2+n}X_\delta-q^{-1}X_2X_{2n}\\
=&q^{\frac{n-1}{2}}X^{2}_{2+n}X_\delta+\sum\limits_{k\geq1}(\sum\limits_{l=1}^{2k-1}q^{-\frac{n}{2}-1+l}) F_{n+1-2k}(X_\delta)X_\delta -q^{\frac{n-3}{2}}X_{n+1}\\&-\sum\limits_{k\geq1}(\sum\limits_{l=1}^{2k-1}q^{-\frac{n}{2}-1+l}) F_{n-2k}(X_\delta).
\end{align*}

Note that $X_{2+n}^{2}X_\delta=q^{-1}X_{n+1}+qX_{n+3}+(q^{\frac{1}{2}}+q^{-\frac{1}{2}})$.

Therefore, we have that
\begin{align*}
&X_2X_{4+2n} \\
=&q^{\frac{n-3}{2}}X_{n+1}+q^{\frac{n+1}{2}}X_{n+3}+q^{\frac{n-1}{2}}(q^{-\frac{1}{2}}+q^{\frac{1}{2}}) +\sum\limits_{k\geq1}(\sum\limits_{l=1}^{2k-1}q^{-\frac{n}{2}-1+l})F_{n+1-2k}(X_\delta)X_\delta\\
&-q^{\frac{n-3}{2}}X_{n+1} -\sum\limits_{k\geq1}(\sum\limits_{l=1}^{2k-1}q^{-\frac{n}{2}-1+l})F_{n-2k}(X_\delta)\\
=&q^{\frac{n+1}{2}}X_{n+3}+(q^{\frac{n}{2}-1}+q^{\frac{n}{2}})+ \sum\limits_{k=1}^{\frac{n}{2}-1}(\sum\limits_{l=1}^{2k-1}q^{-\frac{n}{2}-1+l})F_{n+2-2k}(X_\delta) +\sum\limits_{l=1}^{n-1}q^{-\frac{n}{2}-1+l}X_{\delta}^{2}\\
&-\sum\limits_{l=1}^{n-1}q^{-\frac{n}{2}-1+l}\\
=&q^{\frac{n+1}{2}}X_{n+3}+ \sum\limits_{k=1}^{\frac{n}{2}+1} (\sum\limits_{l=1}^{2k-1}q^{-\frac{n}{2}-1+l})F_{n+2-2k}(X_\delta).
\end{align*}

Similarly, we can prove the statement for odd $n$.

(3) To prove (\ref{equation4}), it suffices to show that for a positive odd integer $n$, we have that
\begin{align}\label{equ4}
\nonumber
&X_{1}X_{1+n}\\
=&\sum\limits_{1<2k<n}\big(\sum\limits_{l=1}^{\text{min}(4k,n-2k)}q^{-\frac{1}{2}-k+l}\big)X_{n+1-4k}
+\left\{
\begin{aligned}
q^{\frac{n}{2}}X^{3}_{1+\frac{n}{3}},{\hskip 1.75cm}&~n\equiv 0~(\rm{mod}~3);\\
q^{\frac{n-1}{2}}X_{\lfloor 1+\frac{n}{3}\rfloor}X_{\lceil 1+\frac{n}{3}\rceil},&~\rm{otherwise.}
\end{aligned}
\right.
\end{align}

When $n=1$, it is trivial. When $n=3$, note that $X_4=q^{-\frac{1}{2}}X_2X_\delta-q^{-1}X_0$ by (\ref{equation1}). It follows that
\begin{align*}
&X_1X_4=q^{-\frac{1}{2}}X_1X_2X_\delta-q^{-1}X_1X_0= q^{\frac{1}{2}}X_2X_1X_\delta-q^{-1}X_1X_0\\
=&q^{\frac{1}{2}}X_2(q^{-1}X^{2}_{0}+q^{}X^{2}_{2})-q^{-1}X_1X_0
=q^{-\frac{1}{2}}X_0+q^{\frac{3}{2}}X_{2}^{3}.
\end{align*}
Since $X_4X_\delta=q^{-\frac{1}{2}}X_2+q^{\frac{1}{2}}X_6$ and $X_6=q^{-\frac{1}{2}}X_4X_\delta-q^{-1}X_2$, when $n=5$, we have that
\begin{align*}
&X_1X_6=X_1(q^{-\frac{1}{2}}X_4X_\delta-q^{-1}X_2)=q^{-1}X_0X_\delta+qX_{2}^{3}X_\delta-q^{-1}X_1X_2\\
=&q^{-\frac{3}{2}}X_{-2}+q^{-\frac{1}{2}}X_2+q^{\frac{1}{2}}X_{2}^{2}X_0+q^{\frac{3}{2}}X_{2}^{2}X_4-q^{-1}X_1X_2\\
=&q^{-\frac{3}{2}}X_{-2}+(q^{-\frac{1}{2}}+q^{\frac{1}{2}}+q^{\frac{3}{2}})X_2+q^2X_2X_3.
\end{align*}

Assume that (\ref{equ4}) is true. Note that $X_1X_{3+n}=q^{-\frac{1}{2}}X_1X_{1+n}X_\delta-q^{-1}X_1X_{n-1}$.

If $n\equiv0~(\rm{mod}~3)$, then
$$
X_1X_{n+1}=\sum\limits_{1<2k<n}(\sum\limits_{l=1}^{\text{min}(4k,n-4k)}q^{-\frac{1}{2}-k+l})X_{n+1-4k} +q^{\frac{n}{2}}X^{3}_{1+\frac{n}{3}}
$$
and
$$
q^{-1}X_1X_{n-1}=\sum\limits_{1<2k<n-2} (\sum\limits_{l=1}^{\text{min}(4k,n-2-2k)}q^{-\frac{3}{2}-k+l})X_{n-1-4k} +q^{\frac{n-5}{2}}X_{\frac{n}{3}}X_{1+\frac{n}{3}}.
$$
We then get
\begin{align*}
&q^{-\frac{1}{2}}X_1X_{n+1}X_\delta\\
=&\sum\limits_{1<2k<n} (\sum\limits_{l=1}^{\text{min}(4k,n-2k)}q^{-1-k+l})(q^{-\frac{1}{2}}X_{n-1-4k} +q^{\frac{1}{2}}X_{n+3-4k})\\ &+q^{\frac{n-1}{2}}X^{2}_{1+\frac{n}{3}}(q^{-\frac{1}{2}}X_{\frac{n}{3}-1} +q^{\frac{1}{2}}X_{3+\frac{n}{3}})\\
=&\sum\limits_{1<2k<n} (\sum\limits_{l=1}^{\text{min}(4k,n-2k)}q^{-\frac{3}{2}-k+l})X_{n-1-4k}
+\sum\limits_{1<2k<n} (\sum\limits_{l=1}^{\text{min}(4k,n-2k)}q^{-\frac{1}{2}-k+l})X_{n+3-4k}\\
&+q^{\frac{n-3}{2}}X_{1+\frac{n}{3}}X_{\frac{n}{3}}+q^{\frac{n+1}{2}}X_{1+\frac{n}{3}}X_{2+\frac{n}{3}} +(q^{\frac{n-2}{2}}+q^{\frac{n}{2}})X_{1+\frac{n}{3}}.
\end{align*}

Note that $\lfloor\frac{n-2}{6}\rfloor=\frac{n-3}{6}$, $\lceil\frac{n-2}{6}\rceil=\frac{n+3}{6}$, $\lfloor\frac{n}{6}\rfloor=\frac{n-3}{6}$, $\lceil\frac{n}{6}\rceil=\frac{n+3}{6}$, $\lfloor\frac{n+2}{6}\rfloor=\frac{n-3}{6}$ and $\lceil\frac{n+2}{6}\rceil=\frac{n+3}{6}$ since $n\equiv0$ (mod $3$). Then we have
\begin{align}\label{reln0}
\left\{
\begin{aligned}
4k<n-2-2k,&~\rm{if~}1\leq k\leq \frac{n-3}{6},\\
4k>n-2-2k,&~\rm{if~}\frac{n+3}{6}\leq k\leq \frac{n-3}{2},\\
4k<n-2k,{\hskip 0.6cm}&~\rm{if~}1\leq k\leq \frac{n-3}{6},\\
4k>n-2k,{\hskip 0.6cm}&~\rm{if~}\frac{n+3}{6}\leq k\leq \frac{n-1}{2},\\
4k<n+2-2k,&~\rm{if~}1\leq k\leq \frac{n-3}{6},\\
4k>n+2-2k,&~\rm{if~}\frac{n+3}{6}\leq k\leq \frac{n+1}{2}.
\end{aligned}
\right.
\end{align}

It follows that
\begin{align*}
&\sum\limits_{1<2k<n}(\sum\limits_{l=1}^{\text{min}(4k,n-2k)}q^{-\frac{3}{2}-k+l})X_{n-1-4k} -\sum\limits_{1<2k<n-2}(\sum\limits_{l=1}^{\text{min}(4k,n-2-2k)}q^{-\frac{3}{2}-k+l})X_{n-1-4k}\\
=&\sum\limits_{k=\frac{n-3}{6}}^{\frac{n-3}{2}}(\sum\limits_{l=n-1-2k}^{n-2k}q^{-\frac{3}{2}-k+l})X_{n-1-4k} +q^{-\frac{n}{2}}X_{1-n}-(q^{\frac{n-2}{2}}+q^{\frac{n}{2}})X_{1+\frac{n}{3}}.
\end{align*}
Hence
\begin{align*}
X_1X_{n+3}
=&\sum\limits_{k=\frac{n+3}{6}}^{\frac{n-1}{2}}(\sum\limits_{l=n+1-2k}^{n+2-2k}q^{-\frac{1}{2}-k+l})X_{n+3-4k} +q^{-\frac{n}{2}}X_{1-n}\\
&+\sum\limits_{1<2k<n}(\sum\limits_{l=1}^{\text{min}(4k,n-2k)}q^{-\frac{1}{2}-k+l})X_{n+3-4k} +q^{\frac{n+1}{2}}X_{1+\frac{n}{3}}X_{2+\frac{n}{3}}\\
=&\sum\limits_{1<2k<n+2}(\sum\limits_{l=1}^{\text{min}(4k,n+2-2k)}q^{-\frac{1}{2}-k+l})X_{n+3-4k} +q^{\frac{n+1}{2}}X_{1+\frac{n}{3}}X_{2+\frac{n}{3}}.
\end{align*}

If $n\equiv1~(\rm{mod}~3)$, then
$$
X_1X_{n+1}=\sum\limits_{1<2k<n}(\sum\limits_{l=1}^{\text{min}(4k,n-2k)}q^{-\frac{1}{2}-k+l})X_{n+1-4k}+ q^{\frac{n-1}{2}}X_{\frac{n+2}{3}}X_{\frac{n+5}{3}},
$$

$$
q^{-1}X_1X_{n-1}=\sum\limits_{1<2k<n-2}(\sum\limits_{l=1}^{\text{min}(4k,n-2-2k)}q^{-\frac{3}{2}-k+l})X_{n-1-4k} +q^{\frac{n-5}{2}}X_{\frac{n-1}{3}}X_{\frac{n+2}{3}}.
$$
It follows that
\begin{align*}
&q^{-\frac{1}{2}}X_1X_{n+1}X_{\delta}\\
=&\sum\limits_{1<2k<n}(\sum\limits_{l=1}^{\text{min}(4k,n-2k)}q^{-\frac{3}{2}-k+l})X_{n-1-4k} +\sum\limits_{1<2k<n}(\sum\limits_{l=1}^{\text{min}(4k,n-2k)}q^{-\frac{1}{2}-k+l})X_{n+3-4k}\\
&+q^{\frac{n-3}{2}}X_{\frac{n+2}{3}}X_{\frac{n-1}{3}}+q^{\frac{n-2}{2}}X_{\frac{n-1}{3}} +q^{\frac{n+2}{2}}X_{\frac{n+5}{3}}^{3}.
\end{align*}

Note that $\lfloor\frac{n-2}{6}\rfloor=\frac{n-7}{6}$, $\lceil\frac{n-2}{6}\rceil=\frac{n-1}{6}$, $\lfloor\frac{n}{6}\rfloor=\frac{n-1}{6}$, $\lceil\frac{n}{6}\rceil=\frac{n+5}{6}$, $\lfloor\frac{n+2}{6}\rfloor=\frac{n-1}{6}$ and $\lceil\frac{n+2}{6}\rceil=\frac{n+5}{6}$, therefore
\begin{align}\label{reln1}
\left\{
\begin{aligned}
4k<n-2-2k,&~\text{if~}1\leq k\leq \frac{n-7}{6},\\
4k>n-2-2k,&~\text{if~}\frac{n-1}{6}\leq k\leq \frac{n-3}{2},\\
4k<n-2k,{\hskip 0.6cm}&~\text{if~}1\leq k\leq \frac{n-1}{6},\\
4k>n-2k,{\hskip 0.6cm}&~\text{if~}\frac{n+5}{6}\leq k\leq \frac{n-1}{2},\\
4k<n+2-2k,&~\text{if~}1\leq k\leq \frac{n-1}{6},\\
4k>n+2-2k,&~\text{if~}\frac{n+5}{6}\leq k\leq \frac{n+1}{2}.
\end{aligned}
\right.
\end{align}

It follows that
\begin{align*}
&\sum\limits_{1<2k<n}(\sum\limits_{l=1}^{\text{min}(4k,n-2k)}q^{-\frac{3}{2}-k+l})X_{n-1-4k} -\sum\limits_{1<2k<n-2}(\sum\limits_{l=1}^{\text{min}(4k,n-2-2k)}q^{-\frac{3}{2}-k+l})X_{n-1-4k}\\
=&\sum\limits_{k=\frac{n+5}{6}}^{\frac{n-3}{2}}(\sum\limits_{l=n-1-2k}^{n-2k}q^{-\frac{3}{2}-k+l})X_{n-1-4k} +q^{\frac{n}{2}-2}X_{\frac{n-1}{3}}+q^{-\frac{n}{2}}X_{1-n}\\
=&\sum\limits_{k=\frac{n-1}{6}}^{\frac{n-3}{2}}(\sum\limits_{l=n-1-2k}^{n-2k}q^{-\frac{3}{2}-k+l})X_{n-1-4k} +q^{-\frac{n}{2}}X_{1-n}-q^{\frac{n}{2}-1}X_{\frac{n-1}{3}}.
\end{align*}
Hence
\begin{align*}
X_1X_{n+3}
=&\sum\limits_{k=\frac{n+5}{6}}^{\frac{n-1}{2}}(\sum\limits_{l=n+1-2k}^{n+2-2k}q^{-\frac{1}{2}-k+l})X_{n+3-4k} +q^{-\frac{n}{2}}X_{1-n}\\
&+\sum\limits_{1<2k<n}(\sum\limits_{l=1}^{\text{min}(4k,n-2k)}q^{-\frac{1}{2}-k+l})X_{n+3-4k} +q^{\frac{n+2}{2}}X_{\frac{n+5}{3}}^{3}\\
=&\sum\limits_{1<2k<n}(\sum\limits_{l=1}^{\text{min}(4k,n+2-2k)}q^{-\frac{1}{2}-k+l})X_{n+3-4k} +q^{-\frac{n}{2}}X_{1-n} +q^{\frac{n+2}{2}}X_{\frac{n+5}{3}}^{3}\\
=&\sum\limits_{1<2k<n+2}(\sum\limits_{l=1}^{\text{min}(4k,n+2-2k)}q^{-\frac{1}{2}-k+l})X_{n+3-4k} +q^{\frac{n+2}{2}}X_{\frac{n+5}{3}}^{3}.
\end{align*}

If $n\equiv2~(\rm{mod}~3)$, then
\begin{align*}
X_1X_{n+1}=\sum\limits_{1<2k<n}(\sum\limits_{l=1}^{\text{min}(4k,n-2k)}q^{-\frac{1}{2}-k+l})X_{n+1-4k}+q^{\frac{n-1}{2}} X_{\frac{n+1}{3}}X_{\frac{n+4}{3}},
\end{align*}
and
\begin{align*}
q^{-1}X_1X_{n-1}=\sum\limits_{1<2k<n-2}(\sum\limits_{l=1}^{\text{min}(4k,n-2-2k)}q^{-\frac{3}{2}-k+l})X_{n-1-4k} +q^{\frac{n-4}{2}} X^{3}_{\frac{n+1}{3}}.
\end{align*}

Note that
\begin{align*}
&q^{-\frac{1}{2}}X_1X_{1+n}X_\delta\\
=&\sum\limits_{1<2k<n}(\sum\limits_{l=1}^{\text{min}(4k,n-2k)}q^{-\frac{3}{2}-k+l})X_{n-1-4k} +\sum\limits_{1<2k<n}(\sum\limits_{l=1}^{\text{min}(4k,n-2k)}q^{-\frac{1}{2}-k+l})X_{n+3-4k}\\
&+q^{\frac{n-4}{2}}X_{\frac{n+1}{3}}^{3}+q^{\frac{n+1}{2}}X_{\frac{n+4}{3}}X_{\frac{n+7}{3}} +q^{\frac{n}{2}}X_{\frac{n+7}{3}}.
\end{align*}

Since $\lfloor\frac{n-2}{6}\rfloor=\frac{n-5}{6}$, $\lceil\frac{n-2}{6}\rceil=\frac{n+1}{6}$, $\lfloor\frac{n}{6}\rfloor=\frac{n-5}{6}$, $\lceil\frac{n}{6}\rceil=\frac{n+1}{6}$, $\lfloor\frac{n+2}{6}\rfloor=\frac{n+1}{6}$, $\lceil\frac{n+2}{6}\rceil=\frac{n+7}{6}$, we have that
\begin{align}\label{reln2}
\left\{
\begin{aligned}
4k<n-2-2k,&~\text{if~}1\leq k\leq \frac{n-5}{6},\\
4k>n-2-2k,&~\text{if~}\frac{n+1}{6}\leq k\leq \frac{n-3}{2},\\
4k<n-2k,{\hskip 0.6cm}&~\text{if~}1\leq k\leq \frac{n-5}{6},\\
4k>n-2k,{\hskip 0.6cm}&~\text{if~}\frac{n+1}{6}\leq k\leq \frac{n-1}{2},\\
4k<n+2-2k,&~\text{if~}1\leq k\leq \frac{n+1}{6},\\
4k>n+2-2k,&~\text{if~}\frac{n+7}{6}\leq k\leq \frac{n+1}{2}.
\end{aligned}
\right.
\end{align}
Note that
\begin{align*}
&\sum\limits_{1<2k<n}(\sum\limits_{l=1}^{\text{min}(4k,n-2k)}q^{-\frac{3}{2}-k+l})X_{n-1-4k} -\sum\limits_{1<2k<n-2}(\sum\limits_{l=1}^{\text{min}(4k,n-2-2k)}q^{-\frac{3}{2}-k+l})X_{n-1-4k}\\
=&\sum\limits_{k=\frac{n+1}{6}}^{\frac{n-3}{2}}(\sum\limits_{l=n-2k-1}^{n-2k}q^{-\frac{3}{2}-k+l})X_{n-1-4k} +q^{-\frac{n}{2}}X_{1-n}
\end{align*}
and
$$
\sum\limits_{l=1}^{\frac{2n+2}{3}}q^{-\frac{1}{2}-\frac{n+1}{6}+l}X_{\frac{n+7}{3}} -\sum\limits_{l=1}^{\frac{2n-1}{3}}q^{-\frac{1}{2}-\frac{n+1}{6}+l}X_{\frac{n+7}{3}}=q^{\frac{n}{2}}X_{\frac{n+7}{3}}.
$$

It follows that
\begin{align*}
&X_1X_{n+3}\\
=&\sum\limits_{k=\frac{n+7}{6}}^{\frac{n-1}{2}}(\sum\limits_{l=n+1-2k}^{n+2-2k}q^{-\frac{1}{2}-k+l})X_{n+3-4k} + q^{-\frac{n}{2}}X_{1-n}\\
&+\sum\limits_{1<2k<n}(\sum_{l=1}^{\text{min}(4k,n-2k)}q^{-\frac{1}{2}-k+l})X_{n+3-4k} +q^{\frac{n}{2}}X_{\frac{n+7}{3}} +q^{\frac{n+1}{2}}X_{\frac{n+4}{3}}X_{\frac{n+7}{3}}\\
=&\sum\limits_{1<2k<n+2}(\sum_{l=1}^{\text{min}(4k,n+2-2k)}q^{-\frac{1}{2}-k+l})X_{n+3-4k}+ q^{\frac{n+1}{2}}X_{\frac{n+4}{3}}X_{\frac{n+7}{3}}.
\end{align*}
This completes the proof of (\ref{equation4}). The proof of (\ref{equation5}) is similar to the proof of (\ref{equation4}).

(4) To prove (\ref{equation6}), it suffices to prove that
\begin{equation}\label{equ6}
X_1X_{1+2n}=q^{2n}X_{1+n}^{2\langle 1+n\rangle}+\sum\limits_{k=1}^{n-1}(\sum\limits_{l=1}^{4\text{min}(k,n-k)}q^{-\frac{1}{2}+l})X_{2n-2k+1}
+\sum\limits_{k\geq1}c_{n,k}F_{2n-2k}(X_{\delta}).
\end{equation}

When $n=1$, we have that $X_1X_3=q^2X^{4}_{2}+1$ which is the exchange relation. When $n=2$, note that
$
X_3F_{2}(X_\delta)=q^{-2}X_1+q^2X_5+(q^{-\frac{3}{2}}+ q^{-\frac{1}{2}}+q^{\frac{1}{2}}+q^{\frac{3}{2}})
$
by (\ref{equation2}), then we have that
\begin{align*}
&X_1X_5=q^{-2}X_1X_{3}F_{2}(X_\delta)-q^{-4}X_{1}^{2}- (q^{-\frac{7}{2}}+ q^{-\frac{5}{2}}+q^{-\frac{3}{2}}+q^{-\frac{1}{2}})X_1\\
=&q^{-1}X_{2}^{3}X_{-2}+qX_{2}^{3}X_{6}+q^{-2}F_{2}(X_\delta)-q^{-4}X_{1}^{2}-(q^{-\frac{7}{2}}+ q^{-\frac{5}{2}}+q^{-\frac{3}{2}}+q^{-\frac{1}{2}})X_1.
\end{align*}
Note that
\begin{enumerate}
\item[]$
q^{-1}X_{2}^{3}X_{-2}=q^{-2}X_{2}^{2}X_{0}^{2}+q^{-\frac{1}{2}}X_{2}^{2}X_{\delta},
$
\item[]$
qX_{2}^{3}X_{6}=q^{2}X_{2}^{2}X_{4}^{2}+q^{\frac{1}{2}}X_{2}^{2}X_{\delta},
$
\item[]$
X_{2}^{2}X_{0}^{2}=q^{-\frac{1}{2}}X_2X_1X_0+X_2X_0=q^{-2}X_{1}^{2}+(q^{-\frac{3}{2}}+q^{-\frac{1}{2}})X_1+1,
$
\item[]$
X_{2}^{2}X_{\delta}=X_2(q^{-\frac{1}{2}}X_0+q^{\frac{1}{2}}X_4)=q^{-1}X_1+qX_3+(q^{-\frac{1}{2}}+q^{\frac{1}{2}}),
$
\item[]$
X_{2}^{2}X_{4}^{2}=q^{\frac{1}{2}}X_2X_3X_4+X_2X_4=q^{2}X_{3}^{2}+(q^{\frac{1}{2}}+q^{\frac{3}{2}})X_3+1.
$
\end{enumerate}

It follows that
\begin{equation*}
X_1X_5=q^4X_{3}^{2}+(q^{\frac{1}{2}}+q^{\frac{3}{2}}+q^{\frac{5}{2}}+q^{\frac{7}{2}})X_3 +q^{-2}F_{2}(X_\delta)
+(q^{-2}+q^{-1}+2+q+q^{2}).
\end{equation*}

Assume that (\ref{equ6}) is true. We have that
\begin{align*}
X_1X_{3+2n}=q^{-2}X_1X_{1+2n}F_{2}(X_\delta)-q^{-4}X_1X_{2n-1}-(q^{-\frac{7}{2}}+ q^{-\frac{5}{2}}+q^{-\frac{3}{2}}+q^{-\frac{1}{2}})X_1.
\end{align*}

If $n$ is even, then
\begin{align*}
&q^{-2}X_1X_{1+2n}F_{2}(X_\delta)\\
=&q^{2n-2}X_{1+n}^{2}F_{2}(X_\delta) +\sum\limits_{k=1}^{n-1} (\sum\limits_{l=1}^{4\text{min}(k,n-k)}q^{-\frac{5}{2}+l})X_{2n+1-2k}F_{2}(X_\delta)\\
&+ \sum\limits_{k\geq1}q^{-2}c_{n,k}F_{2n-2k}(X_\delta)F_{2}(X_\delta).
\end{align*}

For convenience, we set
\begin{align*}
A=&q^{2n-2}X_{1+n}^{2}F_{2}(X_\delta),\\
B=& \sum\limits_{k=1}^{n-1} (\sum\limits_{l=1}^{4\text{min}(k,n-k)}q^{-\frac{5}{2}+l})X_{2n+1-2k}F_{2}(X_\delta),\\
C=&\sum\limits_{k\geq1}q^{-2}c_{n,k}F_{2n-2k}(X_\delta)F_{2}(X_\delta), \\
D=&q^{-4}X_1X_{2n-1},\\
E=& (q^{-\frac{7}{2}}+ q^{-\frac{5}{2}}+q^{-\frac{3}{2}}+q^{-\frac{1}{2}})X_1.
\end{align*}

%$$
%A=q^{2n-2}X_{1+n}^{2}F_{2}(X_\delta),\ \
%B=\sum\limits_{k=1}^{n-1} (\sum\limits_{l=1}^{4\text{min}(k,n-k)}q^{-\frac{5}{2}+l})X_{2n+1-2k}F_{2}(X_\delta),
%$$
%$$
%C=\sum\limits_{k\geq1}q^{-2}c_{n,k}F_{2n-2k}(X_\delta)F_{2}(X_\delta),\
%D=q^{-4}X_1X_{2n-1},\
%E= (q^{-\frac{7}{2}}+ q^{-\frac{5}{2}}+q^{-\frac{3}{2}}+q^{-\frac{1}{2}})X_1.$$

Then we have
\begin{align*}
&A=q^{2n-2}X_{1+n}(q^{-2}X_{n-1}+q^2X_{n+3}+(q^{-\frac{3}{2}}+q^{-\frac{1}{2}}+q^{\frac{1}{2}}+q^{\frac{3}{2}}))\\
=&q^{2n-6}X_{n}^{4}+q^{2n+2}X_{n+2}^{4}+q^{2n-4}+q^{2n}
+ (q^{2n-\frac{7}{2}}+q^{2n-\frac{5}{2}}+q^{2n-\frac{3}{2}}+q^{2n-\frac{1}{2}})X_{n+1},
\end{align*}
\begin{align*}
&B=\sum\limits_{k=1}^{n-1} \sum\limits_{l=1}^{4\text{min}(k,n-k)}q^{-\frac{5}{2}+l} (q^{-2}X_{2n-1-2k}+q^2X_{2n+3-2k} +q^{-\frac{3}{2}}+q^{-\frac{1}{2}}+q^{\frac{1}{2}}+q^{\frac{3}{2}})\\
=&\sum\limits_{k=1}^{n-1} (\sum\limits_{l=1}^{4\text{min}(k,n-k)}q^{-\frac{9}{2}+l})X_{2n-1-2k} +\sum\limits_{k=1}^{n-1} (\sum\limits_{l=1}^{4\text{min}(k,n-k)}q^{-\frac{1}{2}+l})X_{2n+3-2k}\\
&+\sum\limits_{k=1}^{n-1} \sum\limits_{l=1}^{4\text{min}(k,n-k)}(q^{-4+l}+ q^{-3+l}+q^{-2+l}+q^{-1+l}),
\end{align*}
\begin{align*}
&C=\sum\limits_{k=1}^{n-2}c_{n+1,k}[F_{2n-2-2k}(X_\delta)+F_{2n+2-2k}(X_\delta)]+c_{n+1,n-1}(F_{4}(X_\delta)+2)+c_{n+1,n}F_2(X_\delta)
\end{align*}
and
$$
D=q^{2n-6}X_{n}^{4}+\sum\limits_{k=1}^{n-2}(\sum\limits_{l=1}^{\text{min}(k,n-1-k)}q^{-\frac{9}{2}+l})X_{2n-1-2k} +\sum\limits_{k\geq1}c_{n+1,k}F_{2n-2-2k}(X_\delta).
$$

It follows that
\begin{align*}
&X_1X_{3+2n}=A+B+C-D-E\\
=&q^{2n-4}+q^{2n}+q^{2n+2}X_{n+2}^{4}+(q^{2n-\frac{7}{2}}+q^{2n-\frac{5}{2}}+q^{2n-\frac{3}{2}} +q^{2n-\frac{1}{2}}) X_{n+1}\\
&+\sum\limits_{k=1}^{n-1}(\sum\limits_{l=1}^{4\text{min}(k,n-k)}q^{-\frac{9}{2}+l})X_{2n-1-2k} +\sum\limits_{k=1}^{n-1}(\sum\limits_{l=1}^{4\text{min}(k,n-k)}q^{-\frac{1}{2}+l})X_{2n+3-2k}\\
&+\sum\limits_{k=1}^{n-1}\sum\limits_{l=1}^{4\text{min}(k,n-k)}(q^{-4+l}+q^{-3+l}+q^{-2+l}+q^{-1+l}) +\sum\limits_{k=1}^{n-2}c_{n+1,k}F_{2n+2-2k}(X_\delta)\\
&+c_{n+1,n-1}(F_{4}(X_\delta)+1)+c_{n+1,n}F_2(X_\delta) -\sum\limits_{k=1}^{n-2}(\sum\limits_{l=1}^{4\text{min}(k,n-1-k)}q^{-\frac{9}{2}+l})X_{2n-1-2k}\\
&-(q^{-\frac{7}{2}}+q^{-\frac{5}{2}}+q^{-\frac{3}{2}}+q^{-\frac{1}{2}})X_1
\end{align*}
\begin{align*}
=&q^{2n-4}+q^{2n}+q^{2n+2}X_{n+2}^{4}+(q^{2n-\frac{7}{2}}+q^{2n-\frac{5}{2}}+q^{2n-\frac{3}{2}} +q^{2n-\frac{1}{2}}) X_{n+1}\\
&+\sum\limits_{k=1}^{n-2}(\sum\limits_{l=1}^{4\text{min}(k,n-k)}q^{-\frac{9}{2}+l})X_{2n-1-2k} +\sum\limits_{k=1}^{n-1}(\sum\limits_{l=1}^{4\text{min}(k,n-k)}q^{-\frac{1}{2}+l})X_{2n+3-2k}\\
&+\sum\limits_{k=1}^{n-1}\sum\limits_{l=1}^{4\text{min}(k,n-k)}(q^{-4+l}+q^{-3+l}+q^{-2+l}+q^{-1+l}) +\sum\limits_{k=1}^{n}c_{n+1,k}F_{2n+2-2k}(X_\delta)\\
&+c_{n+1,n-1} -\sum\limits_{k=1}^{n-2}(\sum\limits_{l=1}^{4\text{min}(k,n-1-k)}q^{-\frac{9}{2}+l})X_{2n-1-2k}.
\end{align*}

Since
\begin{align*}
\text{min}(k,n-1-k)=
\left\{
\begin{aligned}
k,{\hskip 1.3cm}&~\text{if~}k\leq \frac{n}{2}-1;\\
n-1-k,&~\text{if~} k\geq \frac{n}{2},
\end{aligned}
\right.
\end{align*}
\begin{align*}
\text{min}(k,n-k)=
\left\{
\begin{aligned}
k,{\hskip 0.6cm} &~\text{if~}k\leq \frac{n}{2}-1;\\
n-k,&~\text{if~} k\geq \frac{n}{2},
\end{aligned}
\right.
\end{align*}
and
\begin{align*}
\text{min}(k,n+1-k)=
\left\{
\begin{aligned}
k,{\hskip 1.3cm}&~\text{if~}k\leq \frac{n}{2};\\
n+1-k,&~\text{if~} k\geq \frac{n}{2}+1,
\end{aligned}
\right.
\end{align*}
we have that

\begin{align*}
&\sum\limits_{k=1}^{n-2}(\sum\limits_{l=1}^{4\text{min}(k,n-k)}q^{-\frac{9}{2}+l})X_{2n-1-2k} -\sum\limits_{k=1}^{n-2}(\sum\limits_{l=1}^{4\text{min}(k,n-1-k)}q^{-\frac{9}{2}+l})X_{2n-1-2k}\\
=&\sum\limits_{k=\frac{n}{2}}^{n-2}(\sum\limits_{l=1}^{4(n-k)}q^{-\frac{9}{2}+l})X_{2n-1-2k} -\sum\limits_{k=\frac{n}{2}}^{n-2}(\sum\limits_{l=1}^{4(n-1-k)}q^{-\frac{9}{2}+l})X_{2n-1-2k}\\
=&\sum\limits_{k=\frac{n}{2}}^{n-2}(\sum\limits_{l=4(n-k)-3}^{4(n-k)}q^{-\frac{9}{2}+l})X_{2n-1-2k}.
\end{align*}
Note that
\begin{align*}
&(q^{2n-\frac{7}{2}}+q^{2n-\frac{5}{2}}+q^{2n-\frac{3}{2}} +q^{2n-\frac{1}{2}}) X_{n+1} +\sum\limits_{k=\frac{n}{2}}^{n-2}(\sum\limits_{l=4(n-k)-3}^{4(n-k)}q^{-\frac{9}{2}+l})X_{2n-1-2k}\\
&+\sum\limits_{k=1}^{n-1}(\sum\limits_{l=1}^{4\text{min}(k,n-k)}q^{-\frac{1}{2}+l})X_{2n+3-2k}\\
=&\sum\limits_{k=\frac{n}{2}+1}^{n}(\sum\limits_{l=4(n-k)+1}^{4(n+1-k)}q^{-\frac{1}{2}+l})X_{2n+3-2k} +\sum\limits_{k=1}^{n-1}(\sum\limits_{l=1}^{4\text{min}(k,n-k)}q^{-\frac{1}{2}+l})X_{2n+3-2k}\\
=&\sum\limits_{k=1}^{n}(\sum\limits_{l=1}^{4\text{min}(k,n+1-k)}q^{-\frac{1}{2}+l})X_{2n+3-2k}.
\end{align*}
Therefore
\begin{align*}
&X_1X_{3+2n}\\
=&q^{2n+2}X_{n+2}^{4} +\sum\limits_{k=1}^{n}(\sum\limits_{l=1}^{4\text{min}(k,n+1-k)}q^{-\frac{1}{2}+l})X_{2n+3-2k} +\sum\limits_{k=1}^{n}c_{n+1,k}F_{2n+2-2k}(X_\delta) \\ &+q^{2n-4}+q^{2n}+\sum\limits_{k=1}^{n-1}\sum\limits_{l=1}^{4\text{min}(k,n-k)}(q^{-4+l}+q^{-3+l}+q^{-2+l}+q^{-1+l}) +c_{n+1,n-1}.
\end{align*}

We need to show that
\begin{align}\label{equation7}
q^{2n-4}+q^{2n}+\sum\limits_{k=1}^{n-1}\sum\limits_{l=1}^{4\text{min}(k,n-k)}(q^{-4+l}+q^{-3+l}+q^{-2+l}+q^{-1+l}) +c_{n+1,n-1}
=c_{n+1,n+1}.
\end{align}
Note that $c_{n+1,n-1}=\sum\limits_{i=1}^{n-1}a_i(q^{-2(n-i)-3}+q^{2(n-i)-5})+\sum\limits_{i=1}^{n-2}b_i(q^{-2(n-i)-2}+q^{2(n-i)-6}) +b_{n-1}q^{-4}$.

Let $P_n:=\sum\limits_{k=1}^{n-1}\sum\limits_{l=1}^{4\text{min}(k,n-k)}(q^{-4+l}+q^{-3+l}+q^{-2+l}+q^{-1+l})$.

We consider the coefficients of $q^{x}$ in the left hand side of (\ref{equation7}) for $0\leq x\leq 2n$. When $n=2$, we have that $c_{3,1}=q^{-4}$ and $P_2=q^{-3}+2q^{-2}+3q^{-1}+4+3q^{}+2q^{2}+q^{3}$. Thus $c_{3,3}=1+q^{4}+c_{3,1}+P_2$.

When $n=4$, $c_{5,3}=q^{-8}+q^{-7}+2q^{-6}+3q^{-5}+5q^{-4}+3q^{-3}+2q^{-2}+q^{-1}+1$ and $P_4= 3q^{-3}+6q^{-2}+9q^{-1}+12 +10q+8q^{2}+6q^{3}+4q^{4}+3q^{5}+2q^{6}+q^{7}$. Hence $c_{5,5}=q^{4}+q^8+c_{5,3}+P_4$.

Let $n\geq6$, we have that
\begin{enumerate}
\item[(i)]when $x\geq 2n-7$, the coefficients of $q^{x}$ in $c_{n+1,n-1}$ are $0$. The coefficients of $q^{x}$ in $P_n$ are $0$ for $x\geq2n$. For $0\leq x\leq 2n-4$, the coefficients of $q^{x}$ in the polynomial $P_n$ are $4n-4-2x$;
\item[(ii)]it is easy to see that the coefficients of $q^{2n-4}$, $q^{2n-3},q^{2n-2},q^{2n-1}$ and $q^{2n}$ in $P_n$ are $4$, $3$, $2$, $1$ and $0$, respectively. Thus the coefficients of $q^{2n-4},q^{2n-3},q^{2n-2},q^{2n-1}$ and $q^{2n}$ in the left hand side of (\ref{equation7}) are $5,3,2,1$ and $1$, respectively;
\item[(iii)] when $0\leq x\leq 2n-8$ and $x\equiv0~(\text{mod}~4)$, the coefficients of $q^{x}$ in $c_{n+1,n-1}$ are $b_{n-3-\frac{x}{2}}$, then the coefficients of $q^{x}$ in the left hand side of (\ref{equation7}) are $b_{n-3-\frac{x}{2}}+(4n-4-2x)=b_{n+1-\frac{x}{2}}$ since $b_{n-3-\frac{x}{2}}=\frac{1}{2}(n^2-(6+x)n+(\frac{1}{4}x^{2}+3x+10))$ and $b_{n+1-\frac{x}{2}} =\frac{1}{2}(n^2+(2-x)n+(\frac{1}{4}x^{2}-x+2))$;
\item[(iv)]when $0\leq x\leq 2n-8$ and $x\equiv2~(\text{mod}~4)$, the coefficients of $q^{x}$ in $c_{n+1,n-1}$ are $b_{n-3-\frac{x}{2}}$, then the coefficients of $q^{x}$ in the left hand side of (\ref{equation7}) are $b_{n-3-\frac{x}{2}}+(4n-4-2x)=b_{n+1-\frac{x}{2}}$ since $b_{n-3-\frac{x}{2}}=\frac{1}{2}(n^2-(6+x)n)+(\frac{1}{4}x^2 +3x+9)$ and $b_{n+1-\frac{x}{2}} =\frac{1}{2}(n^2+(2-x)n+(\frac{1}{4}x^{2}-x+1))$;
\item[(v)]when $1\leq x\leq 2n-7$ and $x$ are odd, the coefficients of $q^{x}$ in $c_{n+1,n-1}$ are $a_{n-\frac{x+5}{2}}$, so the coefficients of $q^{x}$ in the left hand side of (\ref{equation7}) are $a_{n-\frac{x+5}{2}}+(4n-4-2x)=\frac{1}{2}(n^{2}-(x+6)n+(\frac{1}{4}x^2+3x+\frac{35}{4})) +(4n-4-2x) =\frac{1}{2}(n^{2}+(2-x)n+(\frac{1}{4}x^2-x+\frac{3}{4})) =a_{n+\frac{3-x}{2}}$;
\item[(vi)]the coefficient of $q^{2n-6}$ in $P_n$ and $c_{n+1,n-1}$ are $8$ and $0$ respectively. It follows that the coefficient of $q^{2n-6}$ in the left hand side of (\ref{equation7}) is $b_4=8$. The coefficient of $q^{2n-5}$ in $P_n$ and $c_{n+1,n-1}$ are $6$ and $0$ respectively. Thus the coefficient of $q^{2n-5}$ in the left hand side of (\ref{equation7}) is $a_4=6$.

\end{enumerate}

It follows that the coefficients of $q^{x}$ in the left hand side of (\ref{equation7}) are
\begin{align*}
\left\{
\begin{aligned}
b_{n+1-\frac{x}{2}},&~\text{if}~ x\in\{0,2,\ldots,2n\},\\
a_{n-\frac{x-3}{2}},&~\text{if}~ x\in\{1,3,\ldots,2n-1\}.
\end{aligned}
\right.
\end{align*}

We consider the coefficients of $q^{-x}$ for $1\leq x\leq 2n$.
We have that
\begin{enumerate}
\item[(i)]when $4\leq x\leq 2n$, the coefficients of $q^{-x}$ in the polynomial $P_n$ are $0$;
\item[(ii)]when $x=1,2,3$, the coefficients of $q^{-1},q^{-2},q^{-3}$ in the polynomial $P_n$ are $3(n-1)$, $2(n-1)$ and $n-1$ respectively. Note that the coefficients of $q^{-1},q^{-2},q^{-3}$ in $c_{n+1,n-1}$ are $a_{n-2},b_{n-2}$ and $a_{n-1}$ respectively. Hence the coefficients of $q^{-1},q^{-2},q^{-3}$ in the left hand side of (\ref{equation7}) are $a_{n+1},b_n$ and $a_n$ respectively;
\item[(iii)]when $x=4,6,8,\ldots,2n$, the coefficients of $q^{-x}$ in $c_{n+1,n-1}$ are $b_{n+1-\frac{x}{2}}$. Thus the coefficients of $q^{-x}$ in the left hand side of (\ref{equation7}) are $b_{n+1-\frac{x}{2}}$ for $x=4,6,8,\ldots,2n$;
\item[(iv)]when $x=5,7,\ldots,2n-1$, the coefficients of $q^{-x}$ in $c_{n+1,n-1}$ are $a_{n-\frac{x-3}{2}}$. We obtain that the coefficients of $q^{-x}$ in the left hand side of (\ref{equation7}) are $a_{n-\frac{x-3}{2}}$ for $x=5,7,\ldots,2n-1$.

\end{enumerate}

Hence the coefficients of $q^{-x}$ in the left hand side of (\ref{equation7}) are
\begin{align*}
\left\{
\begin{aligned}
b_{n+1-\frac{x}{2}},&~\text{if}~ x\in\{2,\ldots,2n\},\\
a_{n-\frac{x-3}{2}}, &~\text{if}~ x\in\{1,3,\ldots,2n-1\}.
\end{aligned}
\right.
\end{align*}
The identity (\ref{equation7}) is true. This proceeds the induction and finishes the proof.

If $n$ is odd, by a similar detailed calculation, we can proceed the proof. Thus we can draw the conclusion that (\ref{equation6}) is true.

The entire proof of the theorem is now completed.
\end{proof}
\begin{remark}
According to the definition of bar-invariant and Theorem ~\ref{theorem2}, we can easily obtain
the similar cluster multiplication formulas for $F_n(X_{\delta})X_{m}$ and $X_{n}X_{m}$ in other cases.
\end{remark}

As an immediate consequence of the above theorem, we can naturally obtain  the following result proved in ~\cite{cds} by a simple way.
\begin{corollary}
The sets $\mathcal{B}$, $\mathcal{S}$ and $\mathcal{D}$ are bar-invariant $\ZZ[q^{\pm\frac{1}{2}}]$-bases of $\A_{q}(1,4)$.
\end{corollary}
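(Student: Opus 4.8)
The plan is to deduce that $\mathcal{B}$, $\mathcal{S}$ and $\mathcal{D}$ are $\ZZ[q^{\pm\frac{1}{2}}]$-bases from Theorem~\ref{theorem2} together with the results already recalled from~\cite{cds}. First I would observe that all three sets share the common ``cluster monomial part'' $\mathcal{M}=\{q^{-\frac{1}{2}ab}X_m^aX_{m+1}^b\mid m\in\ZZ,(a,b)\in\ZZ_{\geq0}^2\}$, and differ only in the ``imaginary part'': $\{F_n(X_\delta)\mid n\geq1\}$, $\{S_n(X_\delta)\mid n\geq1\}$, or $\{X_\delta^n\mid n\geq1\}$ respectively (the $n=0$ element being $1\in\mathcal{M}$). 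Since $F_n$, $S_n$ and $x^n$ are each monic of degree $n$ in $x$, the three imaginary parts span the same $\ZZ[q^{\pm\frac{1}{2}}]$-submodule of $\A_q(1,4)$, namely $\ZZ[q^{\pm\frac{1}{2}}][X_\delta]$ minus its constant term; so it suffices to prove the statement for one of them, say $\mathcal{D}$, and the other two follow by the unitriangular change of basis given by the Chebyshev recursions. This reduces everything to showing that $\mathcal{M}\cup\{X_\delta^n\mid n\geq1\}$ is a $\ZZ[q^{\pm\frac{1}{2}}]$-basis.

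Next I would establish linear independence. Here I would invoke the denominator-vector bookkeeping recalled from~\cite{cds}: every element of $\{X_n\ (n\notin\{1,2\}),F_n(X_\delta),S_n(X_\delta)\}$ has a well-defined minimal term $X^{(a,b)}$ with respect to the partial order $\leq$ on $\ZZ^2$, hence a denominator vector $(-a,-b)$; the same then holds for $X_\delta^n$ since its leading behaviour agrees with that of $F_n(X_\delta)$ up to lower terms. One checks that the map sending a basis element to (the exponent of) its minimal monomial is injective on $\mathcal{M}\cup\{X_\delta^n\}$ — the cluster monomials $q^{-\frac12 ab}X_m^aX_{m+1}^b$ from a fixed cluster occupy disjoint cones of denominator vectors, consecutive clusters overlap only in their shared cluster variable, and the vectors $n\delta$ attached to $X_\delta^n$ lie on the ``imaginary ray'' inside none of these cones. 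Distinct elements therefore have distinct minimal monomials in $\mathcal{T}$, which forces $\ZZ[q^{\pm\frac12}]$-linear independence in the quantum torus, hence in $\A_q(1,4)$.

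For spanning I would argue as follows. By Proposition~\ref{prop-gene}, $\A_q(1,4)$ is generated as a $\ZZ[q^{\pm\frac12}]$-algebra by $X_m,X_{m+1},X_{m+2},X_{m+3}$ for any $m$; combined with the exchange relations and $X_{2n}X_\delta=q^{-\frac12}X_{2n-2}+q^{\frac12}X_{2n+2}$, it follows that $\A_q(1,4)$ is spanned by products of the form (cluster monomial)$\cdot$(power of $X_\delta$). It then remains to rewrite each such product back in terms of $\mathcal{D}$. This is exactly where Theorem~\ref{theorem2} does the work: formulas \eqref{equation1}--\eqref{equation6} (together with their bar-conjugates, noted in the Remark) express $X_mF_n(X_\delta)$, $X_mX_{m'}$ and the products $F_n(X_\delta)F_m(X_\delta)$ as $\ZZ[q^{\pm\frac12}]$-linear combinations of cluster monomials and of $F_k(X_\delta)$'s — i.e. of elements of $\mathcal{B}$, hence of $\mathcal{D}$ after the triangular change of basis. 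Iterating these reductions, every monomial in the generators collapses into the span of $\mathcal{D}$; a short argument bounding the number of reduction steps (e.g. by an induction on the ``width'' of the cluster monomial factor, which strictly decreases under each application of the exchange relation rewriting a product of non-adjacent $X_m$'s) guarantees termination.

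The main obstacle I expect is not linear independence — which is essentially a repackaging of the minimal-term analysis of~\cite{cds} — but the spanning/termination step: one must be careful that the rewriting procedure using Theorem~\ref{theorem2} actually terminates, since applying \eqref{equation3} or \eqref{equation6} to $X_mX_{m+2n}$ produces new cluster variables $X_{m+n-\text{something}}$ whose mutual ``distance'' is smaller but which must still be processed, and the imaginary terms $F_k(X_\delta)$ must be recognized as already lying in the basis rather than being further expanded. Once the correct monovariant (decreasing index-spread of the cluster-monomial factor) is identified, the rest is bookkeeping, and the corollary follows.
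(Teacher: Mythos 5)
Your proposal is correct and follows essentially the same route as the paper: reduce to a single one of the three sets via the unitriangular (Chebyshev) change of basis, prove linear independence by the denominator-vector/minimal-term bookkeeping from \cite{cds} together with the partial order on $\ZZ^2$ (a maximal denominator vector in any finite supposed relation forces its coefficient to vanish), and get spanning from Theorem~\ref{theorem2}. The only difference is one of emphasis: the paper declares that it ``suffices to show linear independence,'' treating the spanning step as immediate from the multiplication formulas, whereas you spell out the rewriting and termination details of that step.
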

\begin{proof}
Since there exist unipotent transformations between $F_{n}(X_\delta)$, $S_{n}(X_\delta)$ and $X^{n}_\delta$ for $n\geq 1$, we only need to prove that the set $\mathcal{B}$ is a
bar-invariant $\ZZ[q^{\pm\frac{1}{2}}]$-basis of $\A_{q}(1,4)$. It suffices to show that $\mathcal{B}$ are linearly independent.  Note that the denominator vectors of $F_n(X_\delta)$ in $\A_q(1,4)$ are $(n,2n)$ for $n\in\mathbb{N}$ which are bijective to  the set of positive imaginary roots  of the corresponding Lie algebra denoted by $\Phi_{+}^{\rm{im}}$. By \cite[Proposition 3.1]{SZ}, there exists a bijection between the set of all denominator vectors of cluster monomials and $\mathcal{Q}-\Phi_{+}^{\rm{im}}$  with $\mathcal{Q}:=\mathbb{Z}^2$ being a lattice of rank
2 with a fixed basis of two simple roots $\{\alpha_1,\alpha_2\}$. Thus the denominator vectors of cluster monomials and $F_n(X_\delta)$ are different from each other. Assume that $S$ is a finite set and
$$\sum\limits_{\alpha\in S} a_\alpha X_\alpha=0$$
for $X_\alpha\in\mathcal{B}$ and $a_\alpha\in\ZZ[q^{\pm\frac{1}{2}}]\setminus\{0\}$. Let $V(S)$ denote the set of denominator vectors of $X_\alpha$ for $\alpha\in S$ and let $\leq$ be the partial order on $V(S)$ inherited from $\mathbb{Z}^2$. There exists $\beta\in S$ such that the denominator vector of $X_\beta$ is a maximal denominator vector in $(V(S),\leq)$. We can then deduce that $a_\beta=0$ which is a contradiction. Therefore $\mathcal{B}$ is linearly independent.
\end{proof}
Recall that an element  $Y\in\A_q(1,4)$ is \emph{positive} if the coefficients in its Laurent expansion in the cluster variables from $\{X_m,X_{m+1}\}$ belong to $\NN[q^{\pm\frac{1}{2}}]$.
The following result can be deduced from Theorem \ref{theorem2}.
\begin{corollary}\label{corbasepos}
The elements in the $\ZZ[q^{\pm\frac{1}{2}}]$-bases $\mathcal{B}$, $\mathcal{S}$ and $\mathcal{D}$ of $\A_{q}(1,4)$ are positive.
\end{corollary}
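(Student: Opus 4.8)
The plan is to deduce positivity directly from the explicit multiplication formulas in Theorem~\ref{theorem2}, using the fact that the three bases share a common "monomial" part and differ only in the imaginary part spanned by $\{F_n(X_\delta)\},\{S_n(X_\delta)\},\{X_\delta^n\}$. First I would observe that the cluster monomials $q^{-\frac{1}{2}ab}X_m^aX_{m+1}^b$ are trivially positive, since by definition a cluster monomial is a product of cluster variables from a single cluster, hence its Laurent expansion in $\{X_m,X_{m+1}\}$ is a single monomial with coefficient a power of $q^{\pm\frac12}$; by the automorphism $\sigma_2$ it suffices to check expansions with respect to one fixed adjacent pair, and the quantum Laurent phenomenon together with the exchange relations (or the formulas in Theorem~\ref{theorem2}) shows every cluster variable $X_k$ expands positively in any chosen $\{X_m,X_{m+1}\}$.

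Next, the heart of the matter is positivity of $F_n(X_\delta)$, $S_n(X_\delta)$ and $X_\delta^n$. Since $S_n$ and $X_\delta^n$ are obtained from the $F_k(X_\delta)$ by \emph{unipotent} nonnegative-integer transformations (the transition matrices between $\{F_k\}$, $\{S_k\}$, $\{x^k\}$ have entries in $\NN$, being built from the Chebyshev recursions $F_{n+1}=F_nx-F_{n-1}$ rewritten as $x^{n}=\sum\binom{\cdot}{\cdot}F_{\cdot}$ etc.), it is enough to prove that each $F_n(X_\delta)$ is positive. For this I would argue by induction on $n$. The base cases $F_0=1$, $F_1=X_\delta$, $F_2=X_\delta^2-2$ are handled by Lemma~\ref{lem1} and its companion expression for $X_\delta$: writing $X_\delta$ in terms of the cluster $\{X_m,X_{m+1}\}$ for a well-chosen parity of $m$ exhibits it as a Laurent polynomial with coefficients in $\NN[q^{\pm\frac12}]$, and a short direct computation does the same for $X_\delta^2-2$.

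For the inductive step I would exploit the multiplication formulas of Theorem~\ref{theorem2} in the opposite direction: from \eqref{equation1}, taking $m$ even, $X_mF_n(X_\delta)=q^{-n/2}X_{m-2n}+q^{n/2}X_{m+2n}$, so $F_n(X_\delta)$ is recovered as $X_m^{-1}(q^{-n/2}X_{m-2n}+q^{n/2}X_{m+2n})$; more usefully, the identity $F_n(X_\delta)F_1(X_\delta)=F_{n+1}(X_\delta)+F_{n-1}(X_\delta)$ gives $F_{n+1}(X_\delta)=F_n(X_\delta)X_\delta-F_{n-1}(X_\delta)$, which is a \emph{difference} and so does not manifestly preserve positivity. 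The clean way around this is to instead use an already-positive cluster variable as a "probe": pick $m$ even; by \eqref{equation1} the product $X_m F_{n}(X_\delta)$ is positive (a sum of two positive cluster variables), and one shows the map $Y\mapsto X_m^{-1}\cdot(\text{positive})$ lands in the positive cone whenever the result is known a priori to lie in $\A_q(1,4)$ and its leading terms match — concretely, I would expand both $X_{m-2n}$ and $X_{m+2n}$ positively in a fixed cluster $\{X_m,X_{m+1}\}$ (using the already-established positivity of cluster variables), divide by the monomial $X_m$, and check no cancellation occurs because the unique minimal term of $F_n(X_\delta)$ (the denominator vector $(n,2n)$, recorded in the excerpt) is strictly dominated. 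Thus $F_n(X_\delta)$ is positive for all $n$, and hence so are $S_n(X_\delta)$ and $X_\delta^n$.

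Finally, positivity of the \emph{mixed} products and of arbitrary basis elements is not needed for the statement — the corollary only asserts that the basis elements themselves are positive — so the proof concludes once monomials and imaginary elements are handled. The main obstacle I anticipate is the division-by-$X_m$ step: one must argue carefully that dividing the positive Laurent polynomial $q^{-n/2}X_{m-2n}+q^{n/2}X_{m+2n}$ by the cluster variable $X_m$ cannot produce negative coefficients, which requires controlling the lowest-degree terms in the $X_m$-direction of both summands; here the denominator-vector bookkeeping from \cite{cds} recalled in the excerpt, together with the explicit $n=1,2$ computations, is exactly what makes the induction go through.
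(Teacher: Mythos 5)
Your overall strategy is the paper's: reduce $\mathcal{S}$ and $\mathcal{D}$ to $\mathcal{B}$ via the nonnegative transition matrices among $\{F_k\}$, $\{S_k\}$, $\{x^k\}$, use $\sigma_2$ to fix the expansion cluster, and then induct using the formulas of Theorem~\ref{theorem2}. But there are two genuine problems in your write-up. First, your opening step asserts that "the quantum Laurent phenomenon together with the exchange relations" shows every cluster variable $X_k$ expands positively in a fixed cluster $\{X_m,X_{m+1}\}$. The Laurent phenomenon gives Laurent polynomiality, not positivity, and the exchange relation $X_{k+1}=X_{k-1}^{-1}(q^{1/2}X_k+1)$ involves division by a non-monomial, so neither yields positivity of $X_k$ for free. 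In fact, positivity of the cluster variables in $\{X_1,X_2\}$ must itself be proved by induction via \eqref{equation3}--\eqref{equation6}, and those right-hand sides contain $F_j(X_\delta)$-terms; conversely your inductive step for $F_{n+1}(X_\delta)$ needs positivity of $X_{\pm(n+1)},X_{\pm(n+2)}$. So the induction is necessarily a \emph{simultaneous} one on the set $\{X_{-n},\dots,X_{n},\,F_1(X_\delta),\dots,F_n(X_\delta)\}$, exactly as the paper runs it; splitting it into "cluster variables first, then $F_n$" as you do leaves the first half unproved (and circular if you try to invoke Theorem~\ref{theorem2} for it).

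Second, the division step you flag as the "main obstacle" is not an obstacle at all once you choose the probe correctly, and your proposed fix (denominator-vector bookkeeping to exclude cancellation, "leading terms match") is both vague and unnecessary. The point is to multiply by a cluster variable \emph{belonging to the expansion cluster}: to prove positivity in $\{X_1,X_2\}$ one uses $X_1F_{n+1}(X_\delta)=q^{-(n+1)}X_{-n}^{\langle -n\rangle}+q^{n+1}X_{n+2}^{\langle n+2\rangle}+\sum_{k\geq 1}(\cdots)F_{n+1-2k}(X_\delta)$ from \eqref{equation2}; the right-hand side is positive by the (simultaneous) induction hypothesis, and $X_1=X^{(1,0)}$ is a single Laurent monomial of the quantum torus on $\{X_1,X_2\}$, so dividing by it manifestly preserves $\NN[q^{\pm\frac{1}{2}}]$-coefficients. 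No control of lowest-degree terms or appeal to denominator vectors is needed. With these two repairs --- setting up the intertwined induction and probing with $X_1$ (resp.\ $X_2$) for the cluster $\{X_1,X_2\}$ (and likewise for $\{X_2,X_3\}$) --- your argument becomes the paper's proof.
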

\begin{proof}
By the definitions of Chebyshev polynomials $F_{n}(x)$ and $S_{n}(x)$, we only need to prove the positivity of the elements in $\mathcal{B}$. Using the fact that $\sigma_2$ is an automorphism, it suffices to prove the positivity in the clusters $\{X_1,X_2\}$ and $\{X_2,X_3\}$. Note that
\begin{align*}
X_0=&X^{(1,-1)}+X^{(0,-1)},\\
X_{-1}=&X^{(3,-4)}+(q^{-\frac{3}{2}}+q^{-\frac{1}{2}}+q^{\frac{1}{2}}+q^{\frac{3}{2}})X^{(2,-4)} +(q^{-2}+q^{-1}+1+q^{}+q^{2})X^{(1,-4)} \\
&+(q^{-\frac{3}{2}}+q^{-\frac{1}{2}}+q^{\frac{1}{2}}+q^{\frac{3}{2}})X^{(0,-4)} +X^{(-1,0)}+X^{(-1-4)},\\
X_{-2}=&X^{(-1,1)}+X^{(2,-3)}+(q^{-1}+1+q)X^{(1,-3)}+(q^{-1}+1+q)X^{(0,-3)}+X^{(-1,-3)},\\
X_\delta=&X^{(-1,-2)}+X^{(-1,2)}+X^{(1,-2)}+(q^{-\frac{1}{2}}+q^{\frac{1}{2}})X^{(0,-2)},\\
F_2(X_\delta)=&X^{(-2,-4)}+(q^{-2}+q^{2})X^{(-2,0)}+(q^{-2}+q^{-1}+2+q+q^{2})X^{(0,-4)}\\
&+(q^{-\frac{3}{2}}+q^{-\frac{1}{2}}+q^{\frac{1}{2}}+q^{\frac{3}{2}})(X^{(-1,4)}+X^{(1,-4)}+X^{(-1,0)})+X^{(-2,4)} +X^{(2,-4)}.
\end{align*}

Hence $X_{-2},X_{-1},X_0,X_1,X_2,X_\delta$ and $F_2(X_\delta)$ are positive elements in $\{X_1,X_2\}$. Suppose that $X_{-n},X_{-n+1},\ldots,X_{-2},X_{-1},X_0,X_1,X_2,\ldots,X_{n-1},X_{n},X_\delta,F_2(X_\delta),\ldots,$ and $F_{n}(X_\delta)$ are positive. By Theorem \ref{theorem2}, when $n$ is odd, we have
\begin{align*}
X_1X_{1+n}=&\sum\limits_{1<2k<n}(\sum\limits_{l=1}^{\text{min}(4k,n-2k)}q^{-\frac{1}{2}-k+l})X_{n+1-4k}
+\left\{
\begin{aligned}
q^{\frac{n}{2}}X^{3}_{1+\frac{n}{3}},{\hskip 1.8cm}&~n\equiv 0~(\rm{mod}~3);~~~~~~~ \\
q^{\frac{n-1}{2}}X_{\lfloor 1+\frac{n}{3}\rfloor}X_{\lceil 1+\frac{n}{3}\rceil},&~\rm{otherwise,}
\end{aligned}
\right.
\end{align*}

\begin{align*}
&X_1X_{-n-1}\\=&\sum\limits_{1<2k<n+2}(\sum\limits_{l=1}^{\text{min}(4k,n+2-2k)}q^{\frac{1}{2}+k-l})X_{-n-1+4k}+\left\{
\begin{aligned}
q^{-\frac{n+2}{2}}X^{3}_{\frac{1-n}{3}},{\hskip 1.8cm}&~n\equiv 1~(\rm{mod}~3);~~~~~~~ \\
q^{-\frac{n+3}{2}}X_{\lfloor \frac{1-n}{3}\rfloor}X_{\lceil \frac{1-n}{3}\rceil},&~\rm{otherwise,}
\end{aligned}
\right.
\end{align*}
$$
X_1X_{2+n}=q^{n+1}X^{2\langle\frac{n+3}{2}\rangle}_{\frac{n+3}{2}}+\sum\limits_{k=1}^{\frac{n-1}{2}} (\sum\limits_{l=1}^{4\text{min}(k,\frac{n+1}{2}-k)}q^{-\frac{1}{2}+l})X_{n+2-2k}+\sum\limits_{k\geq1}c_{\frac{n+1}{2},k} F_{n+1-2k}(X_\delta)
$$
and
$$
X_{-n-2}X_{1}=q^{n+3}X^{2\langle-\frac{n+1}{2}\rangle}_{-\frac{n+1}{2}}+\sum\limits_{k=1}^{\frac{n+1}{2}} (\sum\limits_{l=1}^{4\text{min}(k,\frac{n+3}{2}-k)}q^{-\frac{1}{2}+l})X_{1-2k}+ \sum\limits_{k\geq1}c_{\frac{n+3}{2},k} F_{n+3-2k}(X_\delta);
$$
when $n$ is even, we have

\begin{align*}
X_1X_{1+n}=&q^{n}X^{2\langle\frac{n}{2}+1\rangle}_{\frac{n}{2}+1}+\sum\limits_{k=1}^{\frac{n-2}{2}} (\sum\limits_{l=1}^{4\text{min}(k,\frac{n}{2}-k)}q^{-\frac{1}{2}+l})X_{n+1-2k}+\sum\limits_{k\geq1}c_{\frac{n}{2},k} F_{n-2k}(X_\delta),\\
X_{-n-1}X_{1}=&q^{n+2}X^{2\langle-\frac{n}{2}\rangle}_{-\frac{n}{2}}+\sum\limits_{k=1}^{\frac{n}{2}} (\sum\limits_{l=1}^{4\text{min}(k,\frac{n}{2}+1-k)}q^{-\frac{1}{2}+l})X_{1-2k}+ \sum\limits_{k\geq1}c_{\frac{n}{2}+1,k} F_{n+2-2k}(X_\delta),\\
X_2X_{2+n}=&q^{\frac{n}{4}}X_{2+\frac{n}{2}}^{\langle 2+\frac{n}{2}\rangle}+\sum\limits_{k\geq1}(\sum\limits_{l=1}^{2k-1} q^{-\frac{n+2}{4}+l})F_{\frac{n}{2}+1-2k}(X_\delta),\\
X_{-n-2}X_{2}=&q^{\frac{n+4}{4}}X_{-\frac{n}{2}}^{\langle-\frac{n}{2}\rangle}+\sum\limits_{k\geq1}(\sum\limits_{l=1}^{2k-1} q^{-\frac{n+6}{4}+l})F_{\frac{n}{2}+3-2k}(X_\delta).
\end{align*}

We deduce that $X_{-n-2},X_{-n-1},X_{n+1}$ and $X_{n+2}$ are positive in $\{X_1,X_2\}$. According to Theorem \ref{theorem2}, we have that
\begin{align*}%\label{equation3}
& X_1F_{n+1}(X_\delta)\\=&q^{-(n+1)}X_{-n}^{\langle -n\rangle}+q^{n+1}X_{n+2}^{\langle n+2\rangle}\nonumber+\sum\limits_{k\geq1} (\sum\limits_{l=1}^{k} (q^{-\frac{4l-1}{2}}+q^{-\frac{4l-3}{2}}+q^{\frac{4l-3}{2}}+q^{\frac{4l-1}{2}}))F_{n+1-2k}(X_{\de}).
\end{align*}
It follows that $F_{n+1}(X_\delta)$ is positive in $\{X_1,X_2\}$. By induction, each element in $\mathcal{B}$ is positive in $\{X_1,X_2\}$. Similarly, each element in $\mathcal{B}$ is positive in $\{X_2,X_3\}$. The proof is completed.
\end{proof}

A $\ZZ[q^{\pm\frac{1}{2}}]$-basis $\mathcal{C}$ of $\A_q(1,4)$ is called the \emph{canonical} basis of $\A_q(1,4)$ if every positive element of $\A_q(1,4)$ is the $\NN[q^{\pm\frac{1}{2}}]$-linear combination of elements of $\mathcal{C}$.
\begin{theorem}
The basis $\mathcal{B}$ is the canonical basis of $\A_q(1,4)$.
\end{theorem}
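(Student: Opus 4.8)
The plan is to show that $\mathcal{B}$ has the defining property of the canonical basis: every positive element of $\A_q(1,4)$ is an $\NN[q^{\pm\frac{1}{2}}]$-linear combination of elements of $\mathcal{B}$. Since we already know from the preceding corollary that $\mathcal{B}$ is a $\ZZ[q^{\pm\frac{1}{2}}]$-basis whose elements are positive, it remains only to prove that a positive element $Y$ expanded in $\mathcal{B}$ has coefficients in $\NN[q^{\pm\frac{1}{2}}]$ rather than merely in $\ZZ[q^{\pm\frac{1}{2}}]$. So write $Y=\sum_{\alpha\in S}a_\alpha X_\alpha$ with $X_\alpha\in\mathcal{B}$, $a_\alpha\in\ZZ[q^{\pm\frac{1}{2}}]$, and suppose for contradiction that some $a_\alpha\notin\NN[q^{\pm\frac{1}{2}}]$; the goal is to exhibit, in one of the clusters $\{X_m,X_{m+1}\}$, a Laurent monomial appearing in $Y$ with a negative coefficient.

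First I would exploit the denominator-vector / leading-term structure already recorded in the excerpt. Each element of $\mathcal{B}$ — a cluster monomial $q^{-\frac12 ab}X_m^aX_{m+1}^b$ or an $F_n(X_\delta)$ — has, according to the partial order $\leq$ on $\ZZ^2$, a unique minimal (``leading'') term $X^{(a,b)}$, and these leading terms are pairwise distinct across $\mathcal{B}$ (this is exactly the separation of denominator vectors used in the linear-independence argument of the previous corollary, via $\mathcal{Q}-\Phi_+^{\rm im}$ and \cite[Proposition 3.1]{SZ}). The key structural fact to establish is that the leading term of an element of $\mathcal{B}$ occurs with coefficient exactly $1$, and that in a fixed cluster $\{X_1,X_2\}$ (say) these leading monomials are $\leq$-minimal \emph{in a compatible way}: one wants that if $\beta$ is a $\leq$-maximal denominator vector among $\{$denominator vectors of $X_\alpha:\alpha\in S\}$, then the Laurent monomial $X^{(a_\beta,b_\beta)}$ (the leading monomial of $X_\beta$) cannot be cancelled by, nor cancel against, any contribution of the other $X_\alpha$. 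Then the coefficient of that monomial in $Y$ equals $a_\beta$ up to a power of $q^{\frac12}$; positivity of $Y$ forces $a_\beta\in\NN[q^{\pm\frac12}]$, and we may subtract $a_\beta X_\beta$ (again positive, by the earlier corollary) and induct on $|S|$, or on the maximal denominator vector, to conclude that all $a_\alpha\in\NN[q^{\pm\frac12}]$.

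Concretely the induction would run: among the $X_\alpha$ with $a_\alpha\neq 0$, pick $\beta$ with $\leq$-maximal denominator vector $d_\beta=(-a_\beta,-b_\beta)$; by the separation of denominator vectors no other $X_\alpha$ has the monomial $X^{(a_\beta,b_\beta)}$ in its support (every monomial of $X_\alpha$ is $\geq$ its own leading monomial, which is $\not\leq$ that of $X_\beta$ by maximality, and a short case check using the explicit expansions — e.g. the ones for $X_{-1},X_{-2},X_\delta,F_2(X_\delta)$ displayed above, together with Theorem~\ref{theorem2} to propagate to all $n$ — shows the supports of distinct basis elements meet only below the maximal level); hence the coefficient of $X^{(a_\beta,b_\beta)}$ in $Y$, computed in the cluster $\{X_1,X_2\}$, is $q^{\frac12 a_\beta b_\beta}a_\beta$ (or the analogous expression), so $a_\beta\in\NN[q^{\pm\frac12}]$ by positivity; now $Y-a_\beta X_\beta$ is again positive (sum of positives) with strictly smaller support, and induction finishes it. If $\beta$ turns out to be an imaginary class $F_n(X_\delta)$, one uses instead its own displayed leading term $X^{(-n,-2n)}$, which by the discussion in the excerpt is disjoint from all cluster-monomial denominator vectors.

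The main obstacle I anticipate is the \textbf{disjointness-of-supports} step: it is not a priori obvious that the full Laurent supports (not just the leading terms) of distinct elements of $\mathcal{B}$ are ``triangular'' with respect to $\leq$ in a single fixed cluster, so that a maximal-denominator element genuinely contributes an uncancellable extreme monomial. One has to check that every monomial of a cluster monomial $q^{-\frac12 ab}X_m^aX_{m+1}^b$, when rewritten in $\{X_1,X_2\}$, dominates (in $\leq$, or in the relevant directed sense coming from the $g$-vector / tropical grading) its leading monomial, and similarly for the $F_n(X_\delta)$; this should follow from the explicit multiplication formulas of Theorem~\ref{theorem2} and an induction of the same shape as in the proof of Corollary~\ref{corbasepos}, but it is the part that requires genuine care rather than formal manipulation. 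Everything else — that $\mathcal{B}$ is a basis, that its elements are positive, that denominator vectors separate the basis — has already been supplied above and can simply be invoked.
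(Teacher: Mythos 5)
There is a genuine gap, and it is not quite where you located it. Your argument does correctly recover the coefficient $a_\beta$ of the $\leq$-maximal element: its minimal Laurent monomial $X^{(a_\beta,b_\beta)}$ occurs in no other $X_\alpha$ with $\alpha\in S$, so $a_\beta\in\NN[q^{\pm\frac{1}{2}}]$ by positivity of $Y$. But the continuation fails twice over. First, $Y-a_\beta X_\beta$ is a \emph{difference} of positive elements, not a sum, and there is no reason for it to remain positive; asserting that it does is essentially equivalent to the theorem you are trying to prove, so the induction does not close. Second, if you avoid subtracting and argue directly, the leading (minimal) monomials are simply not exclusive to their basis elements: for instance the minimal term $X^{(0,-2)}$ of the cluster monomial $X_0^2=X^{(2,-2)}+(q^{-\frac{1}{2}}+q^{\frac{1}{2}})X^{(1,-2)}+X^{(0,-2)}$ also occurs in $X_\delta$ with coefficient $q^{-\frac{1}{2}}+q^{\frac{1}{2}}$, so the coefficient of $X^{(0,-2)}$ in $Y$ has the form $a_{X_0^2}+(q^{-\frac{1}{2}}+q^{\frac{1}{2}})a_{X_\delta}+\cdots$, and knowing that this total and $a_{X_\delta}$ lie in $\NN[q^{\pm\frac{1}{2}}]$ does not force $a_{X_0^2}\in\NN[q^{\pm\frac{1}{2}}]$. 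The triangularity you set up has nonnegative but nonzero off-diagonal entries, which is fatal for sign conclusions below the top level; this, rather than the disjointness of supports you flag as the main obstacle, is where the approach breaks down.

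The paper's proof instead attaches to each basis element an \emph{exclusive} monomial, one occurring with coefficient $1$ in that element and coefficient $0$ in every other element of $\mathcal{B}$, so that each $a_Z$ literally equals a single Laurent coefficient of $Y$ and positivity of $Y$ finishes the argument with no induction and no subtraction. For a cluster monomial one applies $\sigma_2$ to place it in the cluster $\{X_1,X_2\}$ (or $\{X_2,X_3\}$), where it is the single monomial $X^{(c,d)}$ with $c,d\geq0$; that no other basis element contains a Laurent monomial with both exponents nonnegative is the $q=1$ support statement of Sherman--Zelevinsky (\cite[Propositions 3.6 and 5.2(2)]{SZ}), lifted to the quantum setting by Corollary \ref{corbasepos}: since all coefficients lie in $\NN[q^{\pm\frac{1}{2}}]$, a coefficient vanishes if and only if its specialization at $q=1$ vanishes. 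For $F_n(X_\delta)$ the exclusive monomial is $X^{(n,-2n)}$, handled via \cite[Propositions 5.1 and 5.2(1), Corollary 3.7]{SZ} and bar-invariance in the same way. To repair your proof you should replace the minimal terms by these exclusive monomials; the positivity of the basis elements, which you already have from Corollary \ref{corbasepos}, is exactly the tool that transfers the classical support statements to the quantum ones.
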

\begin{proof}
Let $Y$ be a positive element of $\A_q(1,4)$. Since $\mathcal{B}$ is a $\ZZ[q^{\pm\frac{1}{2}}]$-basis of $\A_q(1,4)$, we have $Y=\sum\limits_{Z\in\mathcal{B}}a_ZZ$ for $a_Z\in\ZZ[q^{\pm\frac{1}{2}}]\setminus\{0\}$. It suffices to prove that each $a_Z$ equals some coefficient in the Laurent expansion of $Y$ with respect to $X_m$ and $X_{m+1}$ for $m\in\ZZ$.

First of all, we consider the coefficient of the cluster monomial $q^{-\frac{1}{2}ab}X_{m}^{a}X_{m+1}^{b}$, where $a,b\in\NN$. Since $\sigma_{2}$ is an automorphism, it suffices to show that the coefficient $a_{X^{(c,d)}}$ of $X^{(c,d)}$ in $Y=\sum\limits_{Z\in\mathcal{B}}a_ZZ$ equals the coefficient of $X^{(c,d)}$ in the Laurent expansion of $Y$ with respect to $\{X_1,X_{2}\}$ (showing that the coefficient of the cluster monomial $q^{-\frac{1}{2}cd}X_{2}^{c}X_{3}^{d}$ in $Y=\sum\limits_{Z\in\mathcal{B}}a_ZZ$ equals the coefficient of $q^{-\frac{1}{2}cd}X_{2}^{c}X_{3}^{d}$ in the Laurent expansion of $Y$ with respect to $\{X_2,X_3\}$ uses the same method). For $m\neq1$, let
$$
q^{-\frac{1}{2}ab}X_{m}^{a}X_{m+1}^{b}=\sum\limits_{(c,d)\in\ZZ^{2}}M_{cd}(q)X^{(c,d)}
$$
be the Laurent expansion of the cluster monomial $q^{-\frac{1}{2}ab}X_{m}^{a}X_{m+1}^{b}$ with respect to $X_1$ and $X_{2}$, where $M_{cd}(q)\in\ZZ[q^{\pm\frac{1}{2}}]$. For $n\geq1$, let
$$
F_n(X_\delta)=\sum\limits_{(e,f)\in\ZZ^{2}}N_{ef}(q)X^{(e,f)}
$$
be the Laurent expansion of $F_n(X_\delta)$ with respect to $X_1$ and $X_2$, where $N_{ef}(q)\in\ZZ[q^{\pm\frac{1}{2}}]$. By Corollary \ref{corbasepos}, $M_{cd}(q)$, $N_{ef}(q)\in\NN[q^{\pm\frac{1}{2}}]$. We only need to show that if $c,d,e,f\in\NN$ then $M_{cd}(q)=N_{ef}(q)=0$. If there exists some $M_{cd}(q)\neq0$ for some $c,d\in\NN$, then $M_{cd}(1)>0$ which contradicts the statement in \cite[Proposition 3.6]{SZ}. Thus $M_{cd}(q)=0$ for $c,d\geq0$. If there exists some $N_{ef}(q)\neq0$ for $e,f\in\NN$, then $N_{ef}(1)>0$ which contradicts the statement in \cite[Proposition 5.2(2)]{SZ}. Hence $N_{ef}(q)=0$ for $e,f\geq0$.

It remains to consider the coefficient of $F_n(X_\delta)$ in $Y=\sum\limits_{Z\in\mathcal{B}}a_ZZ$ for $n\geq1$. Without loss of generality, we can assume that the cluster variables $X_m$ occurring in $Y=\sum\limits_{Z\in\mathcal{B}}a_ZZ$ satisfy that $m\geq3$ since $\sigma_{2}$ is an automorphism. It suffices to show that, the coefficient of the cluster monomial $X^{(n,-2n)}$ ($n\geq1$) in the Laurent expansion of $F_n(X_\delta)$ with respect to the cluster $\{X_1,X_2\}$ is $1$, but the coefficients of $X^{(n,-2n)}$ in the Laurent expansions of $F_k(X_\delta)$ or $q^{-\frac{1}{2}n_1n_2}X_{m}^{n_1}X_{m+1}^{n_2}$ for $k\neq n$ and $m\geq3$ are $0$. Let $R_1(q)$ denote the coefficient of $X^{(n,-2n)}$ in the Laurent expansion of $F_k(X_\delta)$ with respect to $\{X_1,X_2\}$. Note that $R_1(q)\in\NN[q^{\pm\frac{1}{2}}]$ since $F_n(X_\delta)$ is positive. Using the fact that $F_n(X_\delta)$ is bar-invariant and $R_1(1)=1$ by \cite[Proposition 5.1]{SZ}, it follows that $R_1(q)=1$. Let $R_2(q)\in\NN[q^{\pm\frac{1}{2}}]$ denote the coefficient of $X^{(n,-2n)}$ in the Laurent expansion of $F_k(X_\delta)$ with respect to $\{X_1,X_2\}$. If $R_2(q)\neq0$, then $R_2(1)\neq0$, which contradicts \cite[Proposition 5.2(1)]{SZ}. Thus $R_2(q)=0$. Let $R_3(q)\in\NN[q^{\pm\frac{1}{2}}]$ denote the coefficient of $X^{(n,-2n)}$ in the Laurent expansion of $q^{-\frac{1}{2}}X_{m}^{n_1}X_{m+1}^{n_2}$ with respect to $\{X_1,X_2\}$. If $R_3(q)\neq0$, then $R_3(1)\neq0$, which contracts the statement in \cite[Corollary 3.7]{SZ}. Hence $R_3(q)=0$. This finishes the proof.
\end{proof}

\section{triangular bases}
In \cite{BZ2014}, the choice of the exchange matrix and the skew-symmetric matrix are $B=\left(
  \begin{array}{cc}
    0 & -b \\
    c & 0 \\
  \end{array}
\right)$ for $b,c>0$ and $\Lambda=\left(
  \begin{array}{cc}
    0 & -1 \\
    1 & 0 \\
  \end{array}
\right)$, respectively. Though our exchange matrix $B$ and the skew-symmetric matrix $\Lambda$ differ from those of \cite{BZ2014} by a sign, let $v=q^{-\frac{1}{2}}$ to reconcile the formal variables $q$ and $v$, then Theorem \ref{theorem2} is still true.

The following theorem is a version of Lusztig's Lemma.
\begin{theorem}\cite[Theorem 1.1]{BZ2014}.\label{Lusztiglem}
Let $(L,\prec)$ be a partially order set and satisfy that the lengths of chains with the top element $u\in L$ in $L$ are bounded from above. Let $\A$ be a free $\ZZ[q^{\pm\frac{1}{2}}]$-module spanned by the basis $\{E_u~|~u\in L\}$. The bar-involution on $\A$ is the $\ZZ$-linear map $x\mapsto \overline{x}$ such that $\overline{fx}=\overline{f}\overline{x}$ for $f\in\ZZ[q^{\pm\frac{1}{2}}]$ and $x\in\A$, where $\overline{f}(q^{\frac{1}{2}})=f(q^{-\frac{1}{2}})$. If
\begin{equation}\label{conditionlusztig}
\bar{E}_{u}-E_u\in \bigoplus\limits_{u^\prime\prec u}\ZZ[q^{\pm\frac{1}{2}}]E_{u^\prime}
\end{equation}
for $u\in L$, then there exists a unique element $C_u\in\A$ satisfies that
\begin{equation}\label{equlusztiglem1}
\bar{C}_u=C_u,
\end{equation}
\begin{equation}\label{equlusztiglem2}
C_u-E_u\in\bigoplus\limits_{u^\prime\in L}q^{-\frac{1}{2}}\ZZ[q^{-\frac{1}{2}}]E_{u^\prime}.
\end{equation}
More precisely, (\ref{equlusztiglem2}) can be replaced by
\begin{equation}\label{equlusztiglem3}
C_u-E_u\in\bigoplus\limits_{u^\prime\prec u}q^{-\frac{1}{2}}\ZZ[q^{-\frac{1}{2}}]E_{u^\prime}.
\end{equation}
Then $\{C_u~|~u\in L\}$ is a $\ZZ[q^{\pm\frac{1}{2}}]$-basis in $\A$.
\end{theorem}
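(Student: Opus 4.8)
The plan is to run the standard Kazhdan--Lusztig-type argument: construct $C_u$ by induction on $\mathrm{rk}(u)$, the maximal length of a chain in $L$ with top element $u$ (finite by hypothesis), and then deduce the sharp form \eqref{equlusztiglem3} and the basis property formally.

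For the inductive step, suppose $C_{u'}$ has been constructed for every $u'\prec u$, with $\overline{C}_{u'}=C_{u'}$, with finite $E$-support, and with $C_{u'}-E_{u'}\in\bigoplus_{u''\prec u'}q^{-\frac{1}{2}}\ZZ[q^{-\frac{1}{2}}]E_{u''}$. I would look for $C_u$ in the form $C_u=E_u+\sum_{u'\prec u}p_{u'}C_{u'}$ with unknown coefficients $p_{u'}\in q^{-\frac{1}{2}}\ZZ[q^{-\frac{1}{2}}]$, only finitely many nonzero. Applying the bar-involution and using $\overline{C}_{u'}=C_{u'}$, the condition $\overline{C}_u=C_u$ becomes $\overline{E}_u-E_u=\sum_{u'\prec u}(p_{u'}-\overline{p_{u'}})C_{u'}$. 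By \eqref{conditionlusztig} the left-hand side lies in the span of finitely many $E_{u'}$ with $u'\prec u$; since by the induction hypothesis the transition matrix between $\{E_{u'}\}$ and $\{C_{u'}\}$ over the finite down-set involved is unitriangular with respect to $\prec$, hence invertible over $\ZZ[q^{\pm\frac{1}{2}}]$, we may rewrite $\overline{E}_u-E_u=\sum_{u'\prec u}b_{u'}C_{u'}$ for unique $b_{u'}\in\ZZ[q^{\pm\frac{1}{2}}]$. The task then reduces to solving $p_{u'}-\overline{p_{u'}}=b_{u'}$ for each $u'$.

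The elementary crux is that $\overline{E}_u-E_u$ is anti-invariant under the bar-involution; combined with the $C_{u'}$ being bar-invariant and linearly independent, this forces $\overline{b_{u'}}=-b_{u'}$ for every $u'$. For such an anti-invariant $b\in\ZZ[q^{\pm\frac{1}{2}}]$ there is a unique $p\in q^{-\frac{1}{2}}\ZZ[q^{-\frac{1}{2}}]$ with $p-\overline{p}=b$: writing $b=\sum_{k\geq1}c_k\bigl(q^{k/2}-q^{-k/2}\bigr)$ with $c_k\in\ZZ$ (almost all zero), one takes $p=-\sum_{k\geq1}c_kq^{-k/2}$, and uniqueness holds because a bar-invariant element of $q^{-\frac{1}{2}}\ZZ[q^{-\frac{1}{2}}]$ is zero. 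This pins down all the $p_{u'}$, hence $C_u$, uniquely; the base case $\mathrm{rk}(u)=0$ is immediate since then \eqref{conditionlusztig} gives $\overline{E}_u=E_u$ and $C_u=E_u$. Substituting the $C_{u'}$ back in terms of the $E$'s and using that $q^{-\frac{1}{2}}\ZZ[q^{-\frac{1}{2}}]$ is closed under products and under scaling by $\ZZ$ yields \eqref{equlusztiglem3}, and in particular \eqref{equlusztiglem2}; uniqueness of $C_u$ subject to \eqref{equlusztiglem1} and \eqref{equlusztiglem2} then follows because two candidates would differ by a bar-invariant element of $\bigoplus_{u'}q^{-\frac{1}{2}}\ZZ[q^{-\frac{1}{2}}]E_{u'}$, which must vanish.

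Finally, $\{C_u\mid u\in L\}$ is a $\ZZ[q^{\pm\frac{1}{2}}]$-basis: the unitriangularity of $C_u=E_u+(\text{lower terms in }\prec)$ lets one solve for every $E_v$ as a $\ZZ[q^{\pm\frac{1}{2}}]$-combination of the $C_{u'}$ with $u'\preceq v$ (upward induction on $\mathrm{rk}$), giving spanning, while for linear independence one takes a finite relation $\sum a_uC_u=0$ and inspects a $\prec$-maximal element of its support, whose $E_u$-coefficient is exactly $a_u$, forcing $a_u=0$. I expect the only point needing genuine care to be the bookkeeping forced by $L$ being infinite and merely partially ordered --- in particular checking that each constructed element has finite $E$-support and that the relevant transition matrices are invertible over $\ZZ[q^{\pm\frac{1}{2}}]$; the bounded-chain hypothesis on $L$ is precisely what localizes all of these verifications to a finite sub-poset below $u$.
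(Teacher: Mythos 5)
The paper does not prove this statement: it is quoted verbatim as \cite[Theorem 1.1]{BZ2014} (a version of Lusztig's Lemma), so there is no internal proof to compare against. Your argument is correct and is essentially the standard one used in the cited source --- induction on the (finite) rank $\mathrm{rk}(u)$, reduction to solving $p-\overline{p}=b$ for bar-anti-invariant $b$ with the unique solution $p\in q^{-\frac{1}{2}}\ZZ[q^{-\frac{1}{2}}]$, and unitriangularity for the basis property; the only loose phrase is ``finite sub-poset below $u$'' (the down-set below $u$ need not be finite --- what is finite is the $E$-support of $\overline{E}_u-E_u$ and, recursively, of the $C_{u'}$, which is all your argument actually uses).
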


Let $\A$ be the quantum cluster algebra associated with an acyclic quantum seed. By \cite[Theorem 1.4, Theorem 1.6]{BZ2014}, the basis $\{C_u~|~u\in L\}$ of $\A$ is uniquely determined by (\ref{equlusztiglem1}) and (\ref{equlusztiglem2}). The basis $\{C_u~|~u\in L\}$ is called the \emph{canonical triangular} basis in $\A$ (see \cite[Section 1]{BZ2014} for more detail).

For $x\in\ZZ$, the function $[x]_+$ is defined by $[x]_+=x$ if $x>0$ and $[x]_+=0$ otherwise.

In the rest of this section, we set $L=\ZZ^2$ and $\A=\A_q(1,4)$. For $(a,b),(a^\prime,b^\prime)\in\ZZ^{2}$, recall that the partial order $\prec$ on $\ZZ^{2}$ used in \cite[(6.16)]{BZ2014} is defined by:
\begin{equation}\label{partialorder}
(a^\prime,b^\prime)\prec(a,b)\Longleftrightarrow[-a^\prime]_+<[-a]_+,~[-b^{\prime}]_+<[-b]_+.
\end{equation}
Using the same argument as in \cite[Section 2]{BZ2014}, it follows that $\A_q(1,4)$ satisfies the condition (\ref{conditionlusztig}) for the partial order $\prec$.

The standard monomials in $\A_q(1,4)$ are defined by
\begin{equation}\label{standardmonomial}
E_{(a,b)}=q^{-\frac{1}{2}ab}X_{3}^{[-a]_+}X_{1}^{[a]_+}X_{2}^{[b]_+}X_{0}^{[-b]_+}
\end{equation}
for $a,b\in\ZZ$ (\cite[Section 1]{BZ2014}). As in \cite{BZ2014}, the \emph{crystal lattice} $\A_+\subset\A_q(1,4)$ is defined by
$$
\A_+=\bigoplus\limits_{a,b\in\ZZ}\ZZ[q^{-\frac{1}{2}}]E_{(a,b)}.
$$

Recall that in \cite{BZ2014} for the the quantum cluster algebra of Kronecker type $\A_q(2,2)$, for $a,b\in\ZZ$, Berenstein and Zelevinsky defined the elements $$
E^{\prime}_{(a,b)}: =q^{-\frac{1}{2}ab}X_{2}^{[-b]_+}X_{0}^{[b]_+}X_{1}^{[a]_+}X_{-1}^{[-a]_+} \in\A_q(2,2).$$
 Similarly, we define the elements $E^{\prime}_{(a,b)}$ and $\mu_1E_{(a,b)}$ in $\A_q(1,4)$ as follows
$$
E^{\prime}_{(a,b)}=q^{-\frac{1}{2}ab}X_{2}^{[-b]_+}X_{0}^{[b]_+}X_{1}^{[a]_+}X_{-1}^{[-a]_+}~\text{and}~
\mu_1E_{(a,b)}=q^{-\frac{1}{2}ab}X_{4}^{[-b]_+}X_{2}^{[b]_+}X_{3}^{[a]_+}X_{1}^{[-a]_+}.
$$

A new partial order $\preceq$ on $\ZZ^{2}$ is defined by:
\begin{equation*}\label{partialorder}
(a^\prime,b^\prime)\preceq (a,b)\Longleftrightarrow[-a^\prime]_+\leq[-a]_+,~[-b^{\prime}]_+\leq[-b]_+.
\end{equation*}

%We use the notation $(a^\prime,b^\prime)\preceq(a,b)$ to say that $[-a^\prime]_+\leq[-a]_+$ and $[-b^{\prime}]_+\leq[-b]_+$.
\begin{remark}
The partial order $\preceq$ is used to prove Lemma \ref{lem4.1} and Lemma \ref{lempsi} and the partial order $\prec$ is used in Lusztig's Lemma.
\end{remark}

\begin{lemma}\label{lem4.1}
For $a,b\in\ZZ$, we have that
\begin{enumerate}
  \item $q^{\frac{1}{2}a}E_{(a,b)}X_0-E_{(a,b-1)}\in \bigoplus\limits_{(a^{\prime},b^{\prime})\preceq(a+1,b-1)}q^{-\frac{1}{2}}\ZZ[q^{-\frac{1}{2}}] E_{(a^{\prime},b^{\prime})}$;
  \item $q^{\frac{1}{2}b}X_3E_{(a,b)}-E_{(a-1,b)}\in \bigoplus\limits_{(a^{\prime},b^{\prime})\preceq(a-1,b+4)}q^{-\frac{1}{2}}\ZZ[q^{-\frac{1}{2}}] E_{(a^{\prime},b^{\prime})}$;
  \item $q^{\frac{1}{2}b}E_{(a,b)}X_1-E_{(a+1,b)}\in \bigoplus\limits_{(a^{\prime},b^{\prime})\preceq(a+1,b+4)}q^{-\frac{1}{2}}\ZZ[q^{-\frac{1}{2}}] E_{(a^{\prime},b^{\prime})}$;
  \item $q^{-\frac{1}{2}a}X_4E_{(a,b)}-E_{(a,b-1)}\in \bigoplus\limits_{(a^{\prime},b^{\prime})\preceq(a-1,b-1)}q^{-\frac{1}{2}}\ZZ[q^{-\frac{1}{2}}] E_{(a^{\prime},b^{\prime})}$ if $b>0$;
  \item $q^{-\frac{1}{2}(a-b)}X_4E_{(a,b)}-E_{(a-1,b-1)}\in \bigoplus\limits_{(a^{\prime},b^{\prime})\preceq(a-1,b+3)}q^{-\frac{1}{2}}\ZZ[q^{-\frac{1}{2}}] E_{(a^{\prime},b^{\prime})}$ if $a>0$ and $b\leq0$;
  \item $q^{-\frac{1}{2}(a-b)}X_4E_{(a,b)}-E_{(a-1,b-1)}\in \bigoplus\limits_{(a^{\prime},b^{\prime})\preceq(a,b)}q^{-\frac{1}{2}}\ZZ[q^{-\frac{1}{2}}] E_{(a^{\prime},b^{\prime})}$ if $a\leq0$ and $b\leq0$.
\end{enumerate}
\end{lemma}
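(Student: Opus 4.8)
The plan is to verify the six containments by direct computation in the quantum torus, splitting each part into cases according to the signs of $a$ and $b$ (which control which of $X_1,X_3$ and which of $X_2,X_0$ actually occur in $E_{(a,b)}$). Throughout we use the exchange relations in both orders — $X_0X_2=q^{\frac12}X_1+1$, $X_2X_0=q^{-\frac12}X_1+1$, $X_1X_3=q^{2}X_2^{4}+1$, $X_3X_1=q^{-2}X_2^{4}+1$, $X_2X_4=q^{\frac12}X_3+1$, $X_4X_2=q^{-\frac12}X_3+1$ — the intra-cluster quasi-commutations $X_1X_2=qX_2X_1$, $X_0X_1=qX_1X_0$, $X_2X_3=qX_3X_2$, $X_3X_4=qX_4X_3$ (so $X_4X_3^{k}=q^{-k}X_3^{k}X_4$, etc.), the factorizations $X_0=(q^{\frac12}X_1+1)X_2^{-1}=X_2^{-1}(q^{-\frac12}X_1+1)$, $X_4=(q^{-\frac12}X_3+1)X_2^{-1}=X_2^{-1}(q^{\frac12}X_3+1)$, and their iterates such as $X_0^{m}=\big(\prod_{i=1}^{m}(q^{\frac{2i-1}{2}}X_1+1)\big)X_2^{-m}$. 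All of these follow at once from the defining relations, from $\Lambda$, and from the Laurent expansions of $X_0,X_3,X_4$ already recorded; recall also (cf.\ \cite{cds,BZ2014}) that $\{E_{(a',b')}\}_{(a',b')\in\ZZ^{2}}$ is a $\ZZ[q^{\pm\frac12}]$-basis of $\A_q(1,4)$.

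In each case one multiplies out the left-hand side, pushes the incoming variable to the right past $E_{(a,b)}$ using the quasi-commutations, and then \emph{straightens} the outcome into the basis $\{E_{(a',b')}\}$: a block $X_3^{k}X_1^{j}$ is reduced by iterating $X_3X_1=q^{-2}X_2^{4}+1$, and a block $X_2^{r}X_0^{m}$ by iterating $X_2X_0=q^{-\frac12}X_1+1$, which together give the closed form
$$X_2^{r}X_0^{m}=\Big(\prod_{i=1}^{\min(r,m)}\big(q^{\frac{2i-1-2r}{2}}X_1+1\big)\Big)X_2^{[r-m]_+}X_0^{[m-r]_+},$$
and likewise with $X_4$ in place of $X_0$. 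In the straightened expression the unique term with coefficient $1$ is the asserted standard monomial — the prefactors $q^{\pm a/2},q^{\pm b/2},q^{\pm(a-b)/2}$ in the statement are chosen precisely to normalise it, and in several cases the difference is already $0$ — while each remaining term is then inspected: under the sign hypothesis of the part its coefficient turns out to be a polynomial in $q^{-\frac12}$ with no constant term, and its index satisfies the stated bound for $\preceq$. For instance in (1): if $a\geq0,\ b\geq1$ then $q^{\frac a2}E_{(a,b)}X_0-E_{(a,b-1)}=q^{-\frac b2}E_{(a+1,b-1)}$; if $a<0,\ b\geq1$ straightening $X_3^{-a}X_1$ produces the additional term $q^{2a-\frac b2}E_{(a+1,b+3)}$; and if $b\leq0$ the difference vanishes. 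Parts (2) and (3) are entirely parallel, using $X_3X_1^{a}=q^{-4a+2}X_1^{a-1}X_2^{4}+X_1^{a-1}$ and then the above $X_2^{r}X_0^{m}$-identity with $r=4$ to re-straighten $X_2^{4}X_0^{-b}$ when $b<0$.

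Parts (4)–(6) apply the same template to $X_4E_{(a,b)}$; the new feature is that $X_4=X^{(-1,3)}+X^{(-1,-1)}+X^{(0,-1)}$ has three summands, so the dominant contribution depends on where $(a,b)$ lies — which is exactly why the hypotheses split into $b>0$, $\{a>0,\,b\leq0\}$, $\{a\leq0,\,b\leq0\}$. When $b>0$ the summand $X^{(0,-1)}$ lowers the $X_2$-degree and yields $q^{-\frac a2}X_4E_{(a,b)}\equiv E_{(a,b-1)}$; when $b\leq0$ the block $X_0^{-b}$ recombines with the $X_2^{-1}$-part of $X_4$ via $(q^{\frac12}X_1+1)X_2^{-1}X_0^{-b}=X_0^{1-b}$, so the leading monomial becomes $E_{(a-1,b-1)}$ with prefactor $q^{-\frac{a-b}{2}}$, and the sign of $a$ — deciding whether $E_{(a,b)}$ carries an $X_1$-block or an $X_3$-block, hence how the remaining overflow straightens — is what separates the two error bounds $(a-1,b+3)$ and $(a,b)$. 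These two parts are the most delicate; besides the relations above, it is convenient to use the auxiliary identity $X_4X_0=q^{-1}X_2^{2}+q^{\frac12}X_\delta$ (a short direct check) together with the already-recorded $X_{2n}X_\delta=q^{-\frac12}X_{2n-2}+q^{\frac12}X_{2n+2}$.

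The only real obstacle is the bookkeeping — checking, in each of the roughly four cases per part, that the straightening produces \emph{exactly} the claimed leading monomial with coefficient $1$ and that every other coefficient is a purely negative power of $q^{\frac12}$ — which in the end reduces to a short list of elementary exponent identities such as $\tfrac12(a+1)(b-1)-\tfrac12 ab+\tfrac a2-b+\tfrac12=-\tfrac b2$. A handful of cases could be deduced from others by invoking the automorphism $\sigma_2$ and the fact that the bar-involution is an anti-automorphism fixing each $X_i$, but it is cleanest to carry out the computation directly.
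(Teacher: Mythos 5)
Your strategy for parts (1)--(5) is exactly the paper's: a case-by-case direct computation according to the signs of $a$ and $b$, straightening each product into the standard-monomial basis via the exchange relations and quasi-commutations, and checking that the unique coefficient-$1$ term is the asserted $E$ and that all other terms carry strictly negative powers of $q^{\frac12}$ and indices within the stated $\preceq$-bound. Your sample identities for part (1) (the extra term $q^{-\frac b2}E_{(a+1,b-1)}$ for $a\ge 0$, the additional $q^{2a-\frac b2}E_{(a+1,b+3)}$ for $a<0$, and the vanishing of the difference for $b\le 0$) agree with the paper's formulas, and the same template does carry parts (2)--(5), where the paper likewise just lists the explicit expansions in each sign regime.

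The gap is in part (6). There the paper does \emph{not} straighten $X_4E_{(a,b)}$ directly: for $a\le 0$, $b\le 0$ it argues by a double induction on $(a,b)$, passing from $(a,b)$ to $(a-1,b)$ by left-multiplying by $X_3$ and to $(a,b-1)$ by right-multiplying by $X_0$, and controlling how the error term $\Delta$ propagates by feeding each of its summands back into parts (1) and (2) of the lemma. Your plan instead asserts that ``the same template'' of direct straightening applies, aided by $X_4X_0=q^{-1}X_2^{2}+q^{\frac12}X_\delta$. That identity is correct, but it introduces $X_\delta$ (and, upon iterating against further factors of $X_0$, the even cluster variables $X_{-2k}$), whose expansions in the basis $\{E_{(a',b')}\}$ are precisely what is \emph{not} yet available at this point — controlling them is essentially the content of the later Lemmas \ref{lem4.3} and \ref{lemsn2}. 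Equivalently, a fully direct expansion of $X_3^{-a}X_4X_0^{-b}$ forces a cascade of re-straightenings ($X_2^{r}X_0^{m}$ creates $X_1$-blocks, which against $X_3^{-a}$ create $X_2^{4}$-blocks, which must again meet the remaining $X_0$'s), and you give no closed form or uniform estimate showing that every resulting index satisfies $[-a']_+\le[-a]_+$ and $[-b']_+\le[-b]_+$ with a strictly negative $q^{\frac12}$-power. So part (6) needs either that missing closed form or, as in the paper, an induction that bootstraps from parts (1) and (2); as written, your argument for it is not complete.
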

\begin{proof}
(1) (i) If $a\geq0,b>0$, then
\begin{equation}\label{equ4.1.1i}
q^{\frac{1}{2}a}E_{(a,b)}X_0-E_{(a,b-1)}=q^{-\frac{1}{2}b}E_{(a+1,b-1)}.
\end{equation}

(ii) If $a\geq0,b\leq0$, then
\begin{equation}\label{equ4.1.1ii}
q^{\frac{1}{2}a}E_{(a,b)}X_0=q^{-\frac{1}{2}(ab-b)}X_{1}^{a}X_{0}^{-b+1}=E_{(a,b-1)}.
\end{equation}

(iii) If $a<0,b\leq0$, then
\begin{equation}\label{equ4.1.1iii}
q^{\frac{1}{2}a}E_{(a,b)}X_0=q^{-\frac{1}{2}(ab-b)}X_{3}^{-a}X_{0}^{-b+1}=E_{(a,b-1)}.
\end{equation}

(iv) If $a<0,b>0$, then
\begin{equation}\label{equ4.1.1iv}
q^{\frac{1}{2}a}E_{(a,b)}X_0-E_{(a,b-1)}=q^{-\frac{1}{2}b}E_{(a+1,b-1)}+q^{-\frac{1}{2}(b-4a)}E_{(a+1,b+3)}.
\end{equation}

(2) (i) If $a\leq0,b\geq0$, then
\begin{equation}\label{equ4.1.2i}
q^{\frac{1}{2}b}X_3E_{(a,b)}=q^{-\frac{1}{2}(ab-b)}X_{3}^{1-a}X_{2}^{b}=E_{(a-1,b)}.
\end{equation}

(ii) If $a\leq0,b<0$, then
\begin{equation}\label{equ4.1.2ii}
q^{\frac{1}{2}b}X_3E_{(a,b)}=q^{-\frac{1}{2}(a-1)b}X_{3}^{1-a}X_{0}^{-b}=E_{(a-1,b)}.
\end{equation}

(iii) If $a>0,b\geq0$, then
\begin{equation}\label{equ4.1.2iii}
q^{\frac{1}{2}b}X_3E_{(a,b)}-E_{(a-1,b)}=q^{-\frac{1}{2}(ab+8a-b-4)}X_{1}^{a-1}X_{2}^{b+4}=q^{-2a}E_{(a-1,b+4)}.
\end{equation}

(iv) If $a>0,b=-1$, then
\begin{equation}\label{equ4.1.2iv}
q^{-\frac{1}{2}}X_3E_{(a,-1)}-E_{(a-1,-1)}=q^{-2a}E_{(a-1,3)}+q^{-2a-2}E_{(a,3)}.
\end{equation}

(v) If $a>0,b=-2$, then
\begin{align}\label{equ4.1.2v}
q^{-1}X_3E_{(a,-2)}-E_{(a-1,-2)}=q^{-2a}E_{(a-1,2)}+q^{-2a}(q^{-\frac{5}{2}}+q^{-\frac{3}{2}})E_{(a,2)} +q^{-2a-4}E_{(a+1,2)}.
\end{align}

(vi) If $a>0,b=-3$, then
\begin{align}\label{equ4.1.2vi}
&q^{-\frac{3}{2}}X_3E_{(a,-3)}-E_{(a-1,-3)}\nonumber \\
=&q^{-2a}E_{(a-1,1)}+q^{-2a}(q^{-3}+q^{-2}+q^{-1})E_{(a,1)}+q^{-2a}(q^{-5}+q^{-4}+q^{-3})E_{(a+1,1)} \nonumber\\
& +q^{-2a-6}E_{(a+2,1)}.
\end{align}

(vii) If $a>0,b\leq-4$, then
\begin{align}\label{equ4.1.2vii}
&q^{\frac{1}{2}b}X_3E_{(a,b)}-E_{(a-1,b)} \nonumber\\
=&q^{-2a}E_{(a-1,b+4)} +q^{-\frac{1}{2}(4a-b-3)}(q^{-3}+q^{-2}+q^{-1}+1)E_{(a,b+4)} \nonumber\\
&+q^{-2a+b+2}(q^{-4}+q^{-3}+2q^{-2}+q^{-1}+1)E_{(a+1,b+4)} \nonumber\\
&+q^{-\frac{1}{2}(4a-3b-3)}(q^{-3}+q^{-2}+q^{-1}+1)E_{(a+2,b+4)}+ q^{-2a+2b}E_{(a+3,b+4)}.
\end{align}

(3) (i) If $a\geq0,b\geq0$, then
\begin{equation}\label{equ4.1.3i}
q^{\frac{1}{2}b}E_{(a,b)}X_1=q^{-\frac{1}{2}(a+1)b}X_{1}^{a+1}X_{2}^{b}=E_{(a+1,b)}.
\end{equation}

(ii) If $a\geq0,b<0$, then
\begin{equation}\label{equ4.1.3ii}
q^{\frac{1}{2}b}E_{(a,b)}X_1=q^{-\frac{1}{2}(a+1)b}X_{1}^{a+1}X_{0}^{-b}=E_{(a+1,b)}.
\end{equation}

(iii) If $a<0,b\geq0$, then
\begin{equation}\label{equ4.1.3iii}
q^{\frac{1}{2}b}E_{(a,b)}X_1-E_{(a+1,b)}=q^{2a}E_{(a+1,b+4)}.
\end{equation}

(iv) If $a=-1,b<0$, then
$$
q^{\frac{1}{2}b}E_{(-1,b)}X_1-E_{(0,b)}=q^{-2}X_{2}^{4}X_{0}^{-b}=q^{-2}E_{(0,4)}X_{0}^{-b}.
$$
When $a=-1,b=-1$, we have that
\begin{equation}\label{equ4.1.3iv1}
q^{-\frac{1}{2}}E_{(-1,-1)}X_1-E_{(0,-1)}=q^{-2}E_{(0,3)}+q^{-4}E_{(1,3)}.
\end{equation}
When $a=-1,b=-2$, we have that
\begin{equation}\label{equ4.1.3iv2}
q^{-1}E_{(-1,-2)}X_1-E_{(0,-2)}=q^{-2}E_{(0,2)}+(q^{-\frac{9}{2}}+q^{-\frac{7}{2}})E_{(1,2)}+ q^{-6}E_{(2,2)}
\end{equation}
When $a=-1,b=-3$, we have that
\begin{align}\label{equ4.1.3iv3}
&q^{-\frac{3}{2}}E_{(-1,-3)}X_1-E_{(0,-3)} \nonumber \\
=&q^{-2}E_{(0,1)} +(q^{-5}+q^{-4}+q^{-3})E_{(1,1)}+(q^{-7}+q^{-6}+q^{-5})E_{(2,1)} +q^{-8}E_{(3,1)}.
\end{align}
When $a=-1,b\leq-4$, we have that
\begin{align}\label{equ4.1.3iv4}
&q^{\frac{1}{2}b}E_{(-1,b)}X_1-E_{(0,b)}\nonumber\\
=&q^{-2}E_{(0,b+4)} +q^{\frac{1}{2}b}(q^{-\frac{7}{2}}+q^{-\frac{5}{2}}+q^{-\frac{3}{2}}+q^{-\frac{1}{2}})E_{(1,b+4)} \nonumber\\
&+q^{b}(q^{-4} +q^{-3}+2q^{-2}+q^{-1}+1)E_{(2,b+4)}+ q^{\frac{3}{2}b}(q^{-\frac{7}{2}} +q^{-\frac{5}{2}} +q^{-\frac{3}{2}}
+q^{-\frac{1}{2}})E_{(3,b+4)} \nonumber\\
&+q^{2b-2}E_{(4,b+4)}.
\end{align}

(v) If $a\leq-2,b<0$, then
$$
q^{\frac{1}{2}b}E_{(a,b)}X_1-E_{(a+1,b)}=q^{-\frac{1}{2}(ab+b+4)}X_{3}^{-a-1}X_{2}^{4}X_{0}^{-b} =q^{2a-\frac{1}{2}(a+1)b}E_{(a+1,4)}X_{0}^{-b}.
$$
When $a=-2,b=-1$, we have that
$$
q^{-\frac{1}{2}}E_{(-2,-1)}X_1-E_{(-1,-1)}=q^{-\frac{5}{2}}X_3X_{2}^{4}X_0=q^{-6}E_{(0,3)} +q^{-8}E_{(0,7)}+q^{-4}E_{(-1,3)}.
$$
When $a\leq-2$, by repeatedly using (\ref{equ4.1.1iii}) and (\ref{equ4.1.1iv}), we know that $q^{2a}E_{(a+1,4+b)}$ is a term of $q^{2a-\frac{1}{2}(a+1)b}E_{(a+1,4)}X_{0}^{-b}$. We only need to consider the expansion of
$$
q^{2a-\frac{1}{2}(a+1)b}E_{(a+1,4)}X_{0}^{-b} -q^{2a}E_{(a+1,4+b)}
$$
with respect to the standard monomials $E_{(c,d)}$.

Note that
\begin{align}\label{equ4.1.3v}
\left\{
\begin{aligned}
E_{(c,d)}X_0=q^{-\frac{1}{2}c}E_{(c,d-1)}+q^{-\frac{1}{2}(c+d)}E_{(c+1,d-1)},{\hskip 3.3cm}&~\text{if~}c\geq0,d>0;\\
E_{(c,d)}X_0=q^{-\frac{1}{2}c}E_{(c,d-1)},{\hskip 6.85cm} &~\text{if~}c\geq0,d\leq0;\\
E_{(c,d)}X_0=q^{-\frac{1}{2}c}E_{(c,d-1)},{\hskip 6.85cm}&~\text{if~}c<0,d\leq0;\\
E_{(c,d)}X_0=q^{-\frac{1}{2}c}E_{(c,d-1)}+q^{-\frac{1}{2}(c+d)}E_{(c+1,d-1)} +q^{\frac{1}{2}(3c-d)}E_{(c+1,d+3)},&~\text{if~}c<0,d>0.
\end{aligned}
\right.
\end{align}
By applying (\ref{equ4.1.3v}) repeatedly, we have that
$$
q^{2a-\frac{1}{2}(a+1)b}E_{(a+1,4)}X_{0}^{-b}-q^{2a}E_{(a+1,4+b)}\in q^{-\frac{1}{2}}\A_+
$$
and every term $kq^{-\frac{1}{2}\alpha}E_{(c,d)}$ ($k\in\NN\setminus\{0\}$) in $q^{2a-\frac{1}{2}(a+1)b}E_{(a+1,4)}X_{0}^{-b}-q^{2a}E_{(a+1,4+b)}$ satisfies the conditions that $\alpha\geq -4a>0$, $c\geq a+1$ and $d\geq 4+b$. Namely
$$
q^{\frac{1}{2}b}E_{(a,b)}X_1-E_{(a+1,b)}=q^{2a-\frac{1}{2}(a+1)b}E_{(a+1,4)}X_{0}^{-b}\in \bigoplus\limits_{(a^{\prime},b^{\prime})\preceq(a+1,b+4)}q^{-\frac{1}{2}}\ZZ[q^{-\frac{1}{2}}]E_{(a^{\prime},b^{\prime})}.
$$

(4) (i) If $a>0,b>0$, then
\begin{equation}\label{equ4.1.4i}
q^{-\frac{1}{2}a}X_4E_{(a,b)}-E_{(a,b-1)}=q^{-\frac{1}{2}b}E_{(a-1,b-1)}+q^{-\frac{1}{2}(4a+b)}E_{(a-1,b+3)}.
\end{equation}

(ii) If $a\leq0,b>0$, then
\begin{equation}\label{equ4.1.4ii}
q^{-\frac{1}{2}a}X_4E_{(a,b)}-E_{(a,b-1)}=q^{-\frac{1}{2}b}E_{(a-1,b-1)}.
\end{equation}

(5) (i) If $a>0,b=0$, then
\begin{equation}\label{equ4.1.5i}
q^{-\frac{1}{2}a}X_4E_{(a,0)}-E_{(a-1,-1)}=q^{-2a}E_{(a-1,3)}.
\end{equation}

(ii) If $a>0,b=-1$, then
\begin{equation}\label{equ4.1.5ii}
q^{-\frac{1}{2}(a+1)}X_4E_{(a,-1)}-E_{(a-1,-2)}=q^{-2a}E_{(a-1,2)}+q^{-\frac{1}{2}(4a+3)}E_{(a,2)}.
\end{equation}

(iii) If $a>0,b=-2$, then
\begin{equation}\label{equ4.1.5iii}
q^{-\frac{1}{2}(a+2)}X_4E_{(a,-2)}-E_{(a-1,-3)}=q^{-2a}E_{(a-1,1)}+q^{-2a}(q^{-2}+q^{-1})E_{(a,1)}+q^{-2a-3}E_{(a+1,1)}.
\end{equation}

(iv) If $a>0,b\leq-3$, then
\begin{align}\label{equ4.1.5iv}
&q^{-\frac{1}{2}(a-b)}X_4E_{(a,b)}-E_{(a-1,b-1)}\nonumber\\
=&q^{-2a}E_{(a-1,b+3)}+q^{-\frac{1}{2}(4a-b-3)}(q^{-\frac{5}{2}}+q^{-\frac{3}{2}}+q^{-\frac{1}{2}})E_{(a,b+3)}\nonumber \\
&+q^{-\frac{1}{2}(4a-2b-3)}(q^{-\frac{5}{2}}+q^{-\frac{3}{2}}+q^{-\frac{1}{2}})E_{(a+1,b+3)} +q^{-\frac{1}{2}(4a-3b)}E_{(a+2,b+3)}.
\end{align}

(6) Through a direct calculation, we have that
\begin{align*}
&X_4E_{(0,0)}=X_4=E_{(-1,-1)}-q^{-2}E_{(0,3)},\\
&q^{\frac{1}{2}}X_4E_{(-1,0)}=q^{\frac{1}{2}}X_4X_3=E_{(-2,-1)}-q^{-4}E_{(-1,3)},\\
&q^{-\frac{1}{2}}X_4E_{(0,-1)}=q^{-\frac{1}{2}}X_4X_0=E_{(-1,-2)}-q^{-\frac{5}{2}}E_{(0,2)}-q^{-4}E_{(1,2)}.
\end{align*}
We will prove the statement by induction on both value $a$ and $b$. For fixed $a\leq0,b\leq0$, assume that
$$
q^{-\frac{1}{2}(a-b)}X_4E_{(a,b)}-E_{(a-1,b-1)}=:\Delta\in \bigoplus\limits_{(a^{\prime},b^{\prime})\preceq(a,b)}q^{-\frac{1}{2}}\ZZ[q^{-\frac{1}{2}}]E_{(a^{\prime},b^{\prime})}.
$$

Note that
\begin{align*}
&q^{-\frac{1}{2}(a-1-b)}X_4E_{(a-1,b)}=q^{-\frac{1}{2}(1-b)}X_3 q^{-\frac{1}{2}(a-b)} X_4E_{(a,b)} \\ =&q^{-\frac{1}{2}(1-b)}X_3E_{(a-1,b-1)}+q^{-\frac{1}{2}(1-b)}X_3\Delta
=E_{(a-2,b-1)}+q^{-\frac{1}{2}(1-b)}X_3\Delta
\end{align*}
and
\begin{align*}
&q^{-\frac{1}{2}(a-b+1)}X_4E_{(a,b-1)}=q^{-\frac{1}{2}(a-b)}X_4E_{(a,b)} q^{-\frac{1}{2}(1-a)}X_0\\
=&q^{-\frac{1}{2}(1-a)}E_{(a-1,b-1)}X_0+q^{-\frac{1}{2}(1-a)}\Delta X_0
=E_{(a-1,b-2)}+q^{-\frac{1}{2}(1-a)}\Delta X_0.
\end{align*}
Let $kq^{-\frac{1}{2}\alpha}E_{(c,d)}$ ($k\in\ZZ\setminus\{0\}$) be a term in $\Delta$. Note that $\alpha+1-b+d\geq\alpha+1>0$. By Lemma \ref{lem4.1}(2), we have that
\begin{align*}
&q^{-\frac{1}{2}(1-b)}X_3 q^{-\frac{1}{2}\alpha}E_{(c,d)}=q^{-\frac{1}{2}(\alpha+1-b+d)}(q^{\frac{1}{2}d}X_3E_{(c,d)}) \\
\in& q^{-\frac{1}{2}(\alpha+1-b+d)} ( E_{(c-1,d)}+\bigoplus\limits_{(a^{\prime},b^{\prime})\preceq(c-1,d+4)}q^{-\frac{1}{2}}\ZZ[q^{-\frac{1}{2}}] E_{(a^{\prime},b^{\prime})}) \subseteq \bigoplus\limits_{(a^{\prime},b^{\prime})\preceq(c-1,d)}q^{-\frac{1}{2}}\ZZ[q^{-\frac{1}{2}}]E_{(a^{\prime},b^{\prime})}.
\end{align*}
It follows that
$$
q^{-\frac{1}{2}(a-1-b)}X_4E_{(a-1,b)}-E_{(a-2,b-1)} \in \bigoplus\limits_{(a^{\prime},b^{\prime})\preceq(a-1,b)}q^{-\frac{1}{2}}\ZZ[q^{-\frac{1}{2}}]E_{(a^{\prime},b^{\prime})}.
$$

Note that $\alpha+c+1-a\geq\alpha+1>0$. By Lemma \ref{lem4.1}(1), we have that
\begin{align*}
&q^{-\frac{1}{2}\alpha}E_{(c,d)} q^{-\frac{1}{2}(1-a)}X_0=q^{-\frac{1}{2}(\alpha+c+1-a)}(q^{\frac{1}{2}c}E_{(c,d)}X_0)\\
\in& q^{-\frac{1}{2}(\alpha+c+1-a)}(E_{(c,d-1)}+ \bigoplus\limits_{(a^{\prime},b^{\prime})\preceq(c+1,d-1)}q^{-\frac{1}{2}}\ZZ[q^{-\frac{1}{2}}] E_{(a^{\prime},b^{\prime})})\subseteq \bigoplus\limits_{(a^{\prime},b^{\prime})\preceq(c,d-1)}q^{-\frac{1}{2}}\ZZ[q^{-\frac{1}{2}}]E_{(a^{\prime},b^{\prime})}.
\end{align*}
Thus
$$
q^{-\frac{1}{2}(a-b+1)}X_4E_{(a,b-1)}-E_{(a-1,b-2)} \in \bigoplus\limits_{(a^{\prime},b^{\prime})\preceq(a,b-1)}q^{-\frac{1}{2}}\ZZ[q^{-\frac{1}{2}}]E_{(a^{\prime},b^{\prime})}.
$$
The proof is completed.

\end{proof}

\begin{lemma}\label{lemofX3}
Let $a,b\in\ZZ_{<0}$ and $kq^{-\frac{1}{2}\alpha}E_{(c,d)}$ ($k\in\NN\setminus\{0\}$) be a term in $q^{\frac{1}{2}b}E_{(a,b)}X_1-E_{(a+1,b)}$. Then we have that $c\leq a-b+1$ if $d\geq0$ and $c-d\leq a-b+1$ if $d<0$.
\end{lemma}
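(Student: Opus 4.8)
The plan is to reduce the claim to tracking a single linear quantity through the standard-monomial expansion. From the proof of Lemma \ref{lem4.1}(3)(iv)--(v), for $a,b\in\ZZ_{<0}$ one has
$$
q^{\frac{1}{2}b}E_{(a,b)}X_1-E_{(a+1,b)}=q^{\gamma}\,E_{(a+1,4)}X_0^{-b}
$$
for a suitable half-integer $\gamma$ (namely $\gamma=-2$ if $a=-1$ and $\gamma=2a-\frac{1}{2}(a+1)b$ if $a\leq-2$); its value will not matter. Hence it suffices to bound, for every standard monomial $E_{(c,d)}$ that occurs when $E_{(a+1,4)}X_0^{-b}$ is rewritten as a $\ZZ[q^{\pm\frac{1}{2}}]$-combination of standard monomials by repeatedly applying \eqref{equ4.1.3v}, the quantity $w(c,d):=c+[-d]_+$, which equals $c$ when $d\geq0$ and $c-d$ when $d<0$. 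Indeed, the two asserted inequalities together say precisely that $w(c,d)\leq a-b+1$.

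The core of the argument is the elementary fact that one right multiplication by $X_0$ increases $w$ by at most $1$ on every standard monomial. By \eqref{equ4.1.3v}, $E_{(c,d)}X_0$ is a combination of standard monomials $E_{(c',d')}$ with $(c',d')\in\{(c,d-1),\,(c+1,d-1),\,(c+1,d+3)\}$, and --- crucially --- the last two transitions appear only in the range $d>0$. Then: if $(c',d')=(c,d-1)$, then $[-(d-1)]_+-[-d]_+\in\{0,1\}$, so $w(c',d')-w(c,d)\leq1$; if $(c',d')=(c+1,d-1)$ with $d>0$, then $[-(d-1)]_+=[-d]_+=0$, so $w(c',d')-w(c,d)=1$; if $(c',d')=(c+1,d+3)$ with $d>0$, then $[-(d+3)]_+=[-d]_+=0$, so $w(c',d')-w(c,d)=1$. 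In all cases $w(c',d')\leq w(c,d)+1$.

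Finally I would induct on the number of rewriting steps. The starting monomial $E_{(a+1,4)}$ has weight $w(a+1,4)=a+1$ since $[-4]_+=0$, and producing $E_{(a+1,4)}X_0^{-b}$ requires $-b>0$ multiplications by $X_0$; by the previous paragraph every standard monomial reachable after all $-b$ steps --- hence every standard monomial occurring in $q^{\frac{1}{2}b}E_{(a,b)}X_1-E_{(a+1,b)}$ --- has weight at most $(a+1)+(-b)=a-b+1$ (cancellations only remove terms and cannot raise the weight). Reading $w$ off according to the sign of $d$ gives $c\leq a-b+1$ if $d\geq0$ and $c-d\leq a-b+1$ if $d<0$, as desired. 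I expect the only subtle point to be the bookkeeping in the middle paragraph, and within it the single observation that the two ``$c\mapsto c+1$'' transitions in \eqref{equ4.1.3v} are confined to the region $d>0$, so that they never cause the $[-d]_+$-part of $w$ to jump; everything else is routine.
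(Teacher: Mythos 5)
Your proof is correct and follows essentially the same route as the paper: both reduce to the identity $q^{\frac{1}{2}b}E_{(a,b)}X_1-E_{(a+1,b)}=q^{\gamma}E_{(a+1,4)}X_0^{-b}$ and then track the indices through repeated right multiplication by $X_0$ via \eqref{equ4.1.3v}. Your monovariant $w(c,d)=c+[-d]_+$, increasing by at most $1$ per step, is a clean repackaging of the paper's explicit count of the three transition types (the paper parametrizes $d=4+4n+b$ by the number $n$ of $(c+1,d+3)$-steps and bounds $c$ accordingly), and it also absorbs the $a=-1$ case that the paper checks by hand.
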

\begin{proof}
By (\ref{equ4.1.3iv1}), (\ref{equ4.1.3iv2}), (\ref{equ4.1.3iv3}) and (\ref{equ4.1.3iv4}), it follows that $(c,d)\in\{(0,3),(1,3)\}$ if $a=-1$, $b=-1$; $(c,d)\in\{(0,2),(1,2),(2,2)\}$ if $a=-1$, $b=-2$; $(c,d)\in\{(0,1),(1,1),(2,1), (3,1)\}$ if $a=-1$ and $b=-3$; $(c,d)\in \{(0,b+4),(1,b+4),(2,b+4),(3,b+4),(4,b+4)\}$ if $a=-1$ and $b\leq-4$.

When $a\leq -2$ and $b<0$, $q^{\frac{1}{2}b}E_{(a,b)}X_1-E_{(a+1,b)}=q^{2a-\frac{1}{2}(a+1)b}E_{(a+1,4)}X_{0}^{-b}$. By (\ref{equ4.1.3v}), it follows that $c\in\{a+1,\ldots,a-b+1\}$, and we obverse that there exists some nonnegative integer $n\leq -a-1$ such that
$$
d=4+3n-(-b-n)=4+4n+b\leq b-4a.
$$
If $d=4+4n+b\geq0$, then $c\leq (a+1)+n+(-b-n)=a-b+1.$
If $d=4+4n+b<0$, then $c\leq (a+1)+n+(4+3n)=a+5+4n$, it follows that $c-d\leq a-b+1$. Thus $c\leq a-b+1$ if $d\geq0$ and $c-d\leq a-b+1$ if $d<0$.

\end{proof}

By using (\ref{equ4.1.4i}) and (\ref{equ4.1.4ii}), we obtain the following lemma.
\begin{lemma}\label{lem4.3+}
For $a\in\ZZ$ and $b\in\ZZ_{>0}$, every term $q^{-\frac{1}{2}\alpha}E_{(c,d)}$ in $q^{-\frac{1}{2}a}X_4E_{(a,b)}-E_{(a,b-1)}$ satisfies that $c=a-1$ and $d\geq b-1\geq0$.
\end{lemma}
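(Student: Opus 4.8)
The plan is to derive this as a direct corollary of Lemma~\ref{lem4.1}(4): it suffices to unwind the two identities \eqref{equ4.1.4i} and \eqref{equ4.1.4ii} and read off the exponent vectors of the standard monomials that actually appear on the right-hand sides.

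First I would split into two cases according to the sign of $a$, keeping in mind that $b>0$ throughout. If $a>0$, then \eqref{equ4.1.4i} writes $q^{-\frac{1}{2}a}X_4E_{(a,b)}-E_{(a,b-1)}$ as $q^{-\frac{1}{2}b}E_{(a-1,b-1)}+q^{-\frac{1}{2}(4a+b)}E_{(a-1,b+3)}$; if $a\leq0$, then \eqref{equ4.1.4ii} writes it as $q^{-\frac{1}{2}b}E_{(a-1,b-1)}$. In either case, every standard monomial $E_{(c,d)}$ occurring in the difference has first coordinate $c=a-1$, and its second coordinate $d$ is either $b-1$ or (only in the $a>0$ branch) $b+3$.

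Then I would conclude by observing that $b\in\ZZ_{>0}$ forces $b-1\geq0$, so $d\geq b-1\geq0$ for every term, the value $d=b+3$ a fortiori satisfying $d>b-1\geq0$. This yields $c=a-1$ and $d\geq b-1\geq0$ for all terms, as required. There is no genuine obstacle here; the only point to watch is not to overlook the extra monomial $E_{(a-1,b+3)}$ produced in the $a>0$ case, whose exponent vector clearly meets the stated bounds as well.
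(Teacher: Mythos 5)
Your proposal is correct and is exactly the paper's argument: the paper derives Lemma \ref{lem4.3+} directly from the identities \eqref{equ4.1.4i} and \eqref{equ4.1.4ii} of Lemma \ref{lem4.1}(4), splitting on the sign of $a$ and reading off that every occurring $E_{(c,d)}$ has $c=a-1$ and $d\in\{b-1,b+3\}$ with $b-1\geq0$. You have simply written out the one-line verification the paper leaves implicit, including the extra term $E_{(a-1,b+3)}$ in the $a>0$ case.
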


\begin{lemma}\label{lem4.3}
For $a\in\ZZ_{>0}$ (respectively, $a\in\ZZ_{\leq0}$) and $b\in\ZZ_{\leq0}$, every term $kq^{-\frac{1}{2}\alpha}E_{(c,d)}$ ($k\in\ZZ\setminus\{0\}$) in $q^{-\frac{1}{2}(a-b)}X_4E_{(a,b)}-E_{(a-1,b-1)}$ satisfies that $c\geq a-1$ (respectively, $c\geq a$), $c\leq a-b$ if $d\geq0$ and $c-d\leq a-b$ if $d<0$.
\end{lemma}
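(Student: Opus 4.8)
Both halves of the statement refine expansions already recorded in Lemma \ref{lem4.1}, and the strategy is to read those off while tracking the single auxiliary quantity $\phi(c,d):=c+[-d]_+$; note that the two inequalities asserted by the lemma say exactly that $c\ge a-1$ (resp.\ $c\ge a$) and $\phi(c,d)\le a-b$, which strengthens the membership $(c,d)\preceq(a,b)$ furnished by Lemma \ref{lem4.1}(6). For $a\in\ZZ_{>0}$ and $b\le0$ there is nothing to do but inspect the four explicit formulas in Lemma \ref{lem4.1}(5): in cases (i)--(iii) the terms occurring are $E_{(a-1,3)}$; $E_{(a-1,2)},E_{(a,2)}$; $E_{(a-1,1)},E_{(a,1)},E_{(a+1,1)}$, for which $c\ge a-1$ and $c\le a-b$ hold at sight since there $d>0$; in case (iv) ($b\le-3$) the terms are $E_{(a-1,b+3)},\dots,E_{(a+2,b+3)}$, and one checks $c\le a-b$ when $b=-3$ (so $d=0$) and $c-d=c-b-3\le a-b$ when $b\le-4$ (so $d<0$), both because $c\le a+2$.

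For $a\in\ZZ_{\le0}$ and $b\le0$ the plan is to rerun the double induction from the proof of Lemma \ref{lem4.1}(6) while additionally carrying the bound $\phi(c,d)\le a-b$. The base cases $(a,b)\in\{(0,0),(-1,0),(0,-1)\}$ are the three explicit identities there, and their two or three terms visibly satisfy $c\ge a$ and $\phi(c,d)\le a-b$. The inductive step uses, verbatim from that proof, the identities
\[
q^{-\frac{1}{2}(a-1-b)}X_4E_{(a-1,b)}-E_{(a-2,b-1)}=q^{-\frac{1}{2}(1-b)}X_3\Delta,\qquad q^{-\frac{1}{2}(a-b+1)}X_4E_{(a,b-1)}-E_{(a-1,b-2)}=q^{-\frac{1}{2}(1-a)}\Delta X_0,
\]
where $\Delta:=q^{-\frac{1}{2}(a-b)}X_4E_{(a,b)}-E_{(a-1,b-1)}$ satisfies the inductive hypothesis.

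The engine is a pair of monotonicity facts, to be extracted term by term from Lemma \ref{lem4.1}(2) and Lemma \ref{lem4.1}(1): every $E_{(c',d')}$ occurring in $q^{\frac{1}{2}d}X_3E_{(c,d)}$ satisfies $c'\ge c-1$ and $\phi(c',d')\le\phi(c,d)-1$, and every $E_{(c',d')}$ occurring in $q^{\frac{1}{2}c}E_{(c,d)}X_0$ satisfies $c'\ge c$ and $\phi(c',d')\le\phi(c,d)+1$. Granting these, a term $kq^{-\frac{1}{2}\alpha}E_{(c,d)}$ of $\Delta$ has $c\ge a$ and $\phi(c,d)\le a-b$, so $q^{-\frac{1}{2}(1-b)}X_3$ sends it to terms with $c'\ge a-1$ and $\phi(c',d')\le a-b-1=(a-1)-b$, which is the claim for $(a-1,b)$; symmetrically the $X_0$-recursion gives the claim for $(a,b-1)$. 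The normalizing powers of $q$ stay in $q^{-\frac{1}{2}}\ZZ[q^{-\frac{1}{2}}]$ for the same reason as in Lemma \ref{lem4.1}(6): from $c\ge a$ and $\phi(c,d)\le a-b$ one gets $[-d]_+\le-b$, hence $d\ge b$ and $1-b+\alpha+d\ge\alpha+1>0$, and likewise $1-a+\alpha+c\ge\alpha+1>0$.

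The only substantive work is checking the two monotonicity facts, and the genuinely delicate instances are cases (iv)--(vii) of Lemma \ref{lem4.1}(2), where $c>0$ and $d<0$: there $q^{\frac{1}{2}d}X_3E_{(c,d)}$ spreads into terms whose first coordinate climbs to $c+3$, and one must verify that $\phi$ of each of them is still at most $c-d-1=\phi(c,d)-1$ — which it is, with equality only on the extreme terms. Together with the mirror-image cases ($c<0$, $d>0$) of Lemma \ref{lem4.1}(1) on the $X_0$ side, this is a short, purely computational verification, and it is where I expect the main effort to lie.
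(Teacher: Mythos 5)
Your proposal is correct, and the first half ($a>0$) coincides with the paper's: both simply read the claim off the explicit expansions in Lemma \ref{lem4.1}(5). For the half $a\le 0$, however, you organize the induction differently. The paper inducts only on $b$ (via right multiplication by $X_0$ and the case analysis \eqref{equ4.1.1i}--\eqref{equ4.1.1iv}), and handles the $a$-direction by writing down closed-form base cases along the lines $a=0$ (all $b\le0$), $b=0$ (all $a\le0$) and $b=-1$ (all $a\le0$); it never needs to push a term of $\Delta$ through left multiplication by $X_3$. You instead run a genuine double induction from the origin using both recursions from the proof of Lemma \ref{lem4.1}(6), controlled by the invariant $\phi(c,d)=c+[-d]_+$, which must decrease by at least $1$ under $q^{\frac{1}{2}d}X_3E_{(c,d)}\mapsto E_{(c',d')}$ and increase by at most $1$ under $q^{\frac{1}{2}c}E_{(c,d)}X_0\mapsto E_{(c',d')}$. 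I checked these two monotonicity facts against all cases of Lemma \ref{lem4.1}(1) and (2); they do hold, with the extremal terms of the five-term expansions \eqref{equ4.1.2iv}--\eqref{equ4.1.2vii} realizing $\phi(c',d')=\phi(c,d)-1$ exactly, so your induction closes (and your observation that $c\ge a$, $\phi\le a-b$ forces $d\ge b$, keeping the $q$-powers in $q^{-\frac12}\ZZ[q^{-\frac12}]$, is also right). The trade-off is clear: your route avoids having to derive the three families of explicit base-case identities that the paper states without proof, at the price of verifying the $X_3$-monotonicity precisely in the spreading cases $c>0$, $d<0$ that the paper's single-variable induction never encounters; the $\phi$-formulation is a cleaner packaging of the two inequalities in the statement and would also slightly streamline the paper's own argument.
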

\begin{proof}
(1) When $a>0$, by using the identities (\ref{equ4.1.5i}), (\ref{equ4.1.5ii}), (\ref{equ4.1.5iii}) and (\ref{equ4.1.5iv}), we have that $(c,d)=(a-1,3)$ if $b=0$; $(c,d)\in\{(a-1,2),(a,2)\}$ if $b=-1$; $(c,d)\in\{(a-1,1),(a,1),(a+1,1)\}$ if $b=-2$; $(c,d)\in\{(a-1,b+3),(a,b+3),(a+1,b+3),(a+2,b+3)\}$ if $b\leq-3$. It is easy to see that $c\geq a-1$, $c<a-b$ if $d\geq0$ and $c-d< a-b$ if $d<0$.

(2) When $a\leq0$, we will prove the statement by induction. By Lemma \ref{lem4.1}(6), it follows that $c\geq a$. When $a=0$, note that
\begin{align*}
&q^{-\frac{1}{2}}X_4E_{(0,-1)}=q^{-\frac{1}{2}}X_4X_0=E_{(-1,-2)}-q^{-\frac{5}{2}}E_{(0,2)}-q^{-4}E_{(1,2)},\\
&q^{-1}X_4E_{(0,-2)}=q^{-1}X_4X_{0}^{2}=E_{(-1,-3)}-q^{-3}E_{(0,1)}-(q^{-5}+q^{-4})E_{(1,1)}-q^{-6}E_{(2,1)},
\end{align*}
and
\begin{align*}
q^{-\frac{1}{2}n}X_4E_{(0,-n)}=&E_{(-1,-n-1)}-q^{-\frac{1}{2}(n+4)}E_{(0,3-n)}-q^{-n}(q^{-3}+q^{-2}+q^{-1})E_{(1,3-n)} \nonumber\\
&-q^{-\frac{3}{2}n}(q^{-3}+q^{-2}+q^{-1})E_{(2,3-n)} -q^{-2n-2}E_{(3,3-n)}
\end{align*}
for $n\geq3$. It is easy to see that $c\leq n= a-b$ if $d\geq0$, $c-d\leq a-b$ if $d<0$.

Note that $q^{\frac{1}{2}}X_4E_{(-1,0)}=E_{(-2,-1)}-q^{-4}E_{(-1,3)}$. When $a=-m\leq-1$ and $b=0$, we have that
\begin{align*}
&q^{-\frac{1}{2}m}X_4E_{(-m,0)}=q^{-\frac{1}{2}(m-1)}X_{3}^{m-1}(q^{-\frac{1}{2}}X_3X_4) \\ =&q^{-\frac{1}{2}(m-1)}X_{3}^{m-1}E_{(-2,-1)}-q^{-\frac{1}{2}(7+m)}X_{3}^{m-1}E_{(-1,3)} \\
=&E_{(-m-1,-1)}-q^{-2(1+m)}E_{(-m,3)}.
\end{align*}

From now on, we assume that $a,b\leq-1$ for the rest of the proof. When $a=b=-1$, we have that
$$
X_4E_{(-1,-1)}=q^{-1}X_4X_3X_0=E_{(-2,-2)}-q^{-\frac{9}{2}}E_{(-1,2)}-q^{-6}E_{(0,2)}-q^{-8}E_{(0,6)}.
$$
When $a=-m,b=-1$, we have that
$$
q^{-\frac{1-m}{2}}X_4E_{(-m,-1)}=E_{(-m-1,-2)}-q^{-\frac{4m+5}{2}}E_{(-m,2)} -q^{-2m-4}E_{(1-m,2)}-q^{-4m-4}E_{(1-m,6)}.
$$
For $a=-m\in\ZZ_{<0}$ and $b=-n\in\ZZ_{<0}$, assume that every term $kq^{-\frac{1}{2}\alpha}E_{(c,d)}$ in
$$
\nabla:=q^{-\frac{n-m}{2}}X_4E_{(-m,-n)}-E_{(-m-1,-n-1)}
$$
satisfies that $c\leq n-m$ if $d\leq0$ and $c-d\leq n-m$ if $d<0$.

When $a=-m$ and $b=-n-1$, we have that
\begin{align*}
&q^{-\frac{1}{2}(n+1-m)}X_4E_{(-m,-n-1)}=q^{-\frac{1}{2}(m+1)}(q^{-\frac{1}{2}(n-m)}X_4E_{(-m,-n)})X_0\\
=&q^{-\frac{1}{2}(m+1)}(E_{(-m-1,-n-1)}+\nabla)X_0=E_{(-m-1,-n-2)}+q^{-\frac{1}{2}(m+1)}\nabla X_0.
\end{align*}

Let $k^\prime q^{-\frac{1}{2}\alpha^{\prime}}E_{(c^{\prime},d^{\prime})}$ ($k^\prime\in\ZZ\setminus\{0\}$) be a term in $q^{-\frac{m+1}{2}}(kq^{-\frac{1}{2}\alpha}E_{(c,d)}) X_0$. When $c\geq0,d>0$, it follows that $(c^{\prime},d^{\prime})\in\{(c,d-1),(c+1,d-1)\}$ by (\ref{equ4.1.1i}), i.e., $d^{\prime}\geq0$ and $c\leq c^{\prime}\leq c+1\leq n+1-m$. If $d\leq0$ then $(c^{\prime},d^{\prime})=(c,d-1)$ by (\ref{equ4.1.1ii}) and (\ref{equ4.1.1iii}), i.e., $c^\prime=c$, $d^{\prime}<0$ and $c^{\prime}-d^{\prime}= c-d+1\leq n-m+1$. If $c<0,d>0$, then $(c^{\prime},d^{\prime})\in\{(c,d-1),(c+1,d+3),(c+1,d-1)\}$ by (\ref{equ4.1.1iv}), i.e., $d^{\prime}\geq0$ and $c\leq c^{\prime}\leq c+1\leq n-m+1$.

The proof is completed.
\end{proof}

By \cite[Theorem 3.1, Proposition 4.1]{BZ2014}, we have the following lemma.
\begin{lemma}\label{lemvarphi}
For each $(a,b)\in\ZZ^2$, we have that
$$
E^{\prime}_{(a,b)}-E_{\varphi(a,b)}\in q^{-\frac{1}{2}}\A_+,
$$
where $\varphi:\ZZ^2\rightarrow\ZZ^2$ defined by $\varphi(a,b)=(a,-4[-a]_+-b)$ is a bijection.
\end{lemma}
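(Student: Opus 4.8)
\emph{Strategy.} The natural route is the one indicated just before the statement: invoke \cite[Theorem 3.1, Proposition 4.1]{BZ2014}, where the analogous fact is proved for $\A_q(2,2)$, and carry the argument over to $\A_q(1,4)$ with the (finitely many) numerical coefficients replaced accordingly; in that picture $\varphi$ is the tropicalised mutation. That $\varphi$ is a bijection is immediate: on $\{a\geq0\}$ it is $(a,b)\mapsto(a,-b)$ and on $\{a<0\}$ it is $(a,b)\mapsto(a,4a-b)$, and each of these fixes $a$ (hence its sign) and is an affine involution in $b$, so $\varphi\circ\varphi=\mathrm{id}$. It then suffices to prove $E'_{(a,b)}-E_{\varphi(a,b)}\in q^{-\frac12}\A_+$, and I would split on the sign of $a$.

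\emph{The case $a\geq0$.} Here I expect the two elements to be \emph{equal}, not merely congruent. Indeed $E'_{(a,b)}=q^{-\frac12ab}X_2^{[-b]_+}X_0^{[b]_+}X_1^{a}$ and $E_{\varphi(a,b)}=E_{(a,-b)}=q^{\frac12ab}X_1^{a}X_2^{[-b]_+}X_0^{[b]_+}$ are genuine Laurent monomials in the variables $X_0,X_1,X_2$ of the adjacent clusters $\{X_0,X_1\}$ and $\{X_1,X_2\}$, and exactly one of $X_0,X_2$ occurs since one of $[\pm b]_+$ vanishes. Moving the power $X_1^{a}$ into standard position by $X_0X_1=qX_1X_0$ and $X_2X_1=q^{-1}X_1X_2$ converts the prefactor $q^{-\frac12ab}$ into $q^{\frac12ab}$, giving $E'_{(a,b)}=E_{(a,-b)}$ exactly; in particular the containment holds.

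\emph{The case $a<0$.} This carries the real content. Now $E'_{(a,b)}=q^{-\frac12ab}X_2^{[-b]_+}X_0^{[b]_+}X_{-1}^{-a}$ and $\varphi(a,b)=(a,4a-b)$, and I would induct on $-a\geq1$. For $-a=1$ one expands $X_{-1}$ in the standard monomials of $\{X_1,X_2\}$: a direct computation from the Laurent expansion of $X_{-1}$ recalled in Section~3 (or from $X_{-1}X_1=q^2X_0^4+1$) yields $X_{-1}=E_{(-1,-4)}-\sum_{i=0}^{3}d_i(q)E_{(i,0)}$ with each $d_i(q)\in q^{-\frac12}\ZZ[q^{-\frac12}]$, so $X_{-1}-E_{(-1,-4)}\in q^{-\frac12}\A_+$; since $\varphi(-1,0)=(-1,-4)$, the subcase $b=0$ is done, and the remaining $b$ for $a=-1$ follow by multiplying this expansion (on the left) by $X_2^{[-b]_+}X_0^{[b]_+}$ and applying the multiplication rules of Lemma~\ref{lem4.1} and their analogues for left multiplication, which keep one inside $E_{(\cdot)}+q^{-\frac12}\A_+$ and single out $E_{(-1,-4-b)}$ as the leading standard monomial. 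For the inductive step one uses the recursion $E'_{(a-1,b)}=q^{\frac12b}\,E'_{(a,b)}X_{-1}$, substitutes the inductive hypothesis $E'_{(a,b)}\equiv E_{\varphi(a,b)}\pmod{q^{-\frac12}\A_+}$ together with the base-case expansion of the extra factor $X_{-1}$, and reduces everything to products of a standard monomial with the variables $X_0,X_1,X_3,X_4$; Lemmas~\ref{lem4.1}, \ref{lemofX3}, \ref{lem4.3+} and \ref{lem4.3} then ensure each such product lies in $E_{(\cdot)}+q^{-\frac12}\A_+$, and keeping track of their leading terms shows the leading standard monomial of $E'_{(a-1,b)}$ is precisely $E_{\varphi(a-1,b)}$ with coefficient $1$.

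\emph{Main obstacle.} The hard part is exactly this last bookkeeping in the $a<0$ case: after multiplying out one must check, through all the sign subcases of $b$ relative to $4a$ (which decide whether $X_0$- or $X_2$-powers appear, and how often the shift $b\mapsto b+4$ of Lemma~\ref{lem4.1}(2)--(3) is triggered), that there is a \emph{unique} $\preceq$-maximal standard monomial in the expansion, that it is $E_{\varphi(a,b)}$, and that its coefficient lies in $1+q^{-\frac12}\ZZ[q^{-\frac12}]$. Bar-invariance of the standard monomials supported in the ``acyclic region'' (those involving only two variables of a common cluster) is what pins that leading coefficient down, while the remaining terms are controlled termwise by the degree and parity estimates already packaged into Lemmas~\ref{lemofX3}--\ref{lem4.3}. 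If one prefers brevity, the entire statement follows from \cite[Theorem 3.1, Proposition 4.1]{BZ2014} essentially verbatim, the only change being the passage from the coefficients of $\A_q(2,2)$ to those of $\A_q(1,4)$.
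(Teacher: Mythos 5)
The paper offers no proof of this lemma at all beyond the citation to \cite[Theorem 3.1, Proposition 4.1]{BZ2014}, which is precisely the route you name at the beginning and end of your proposal, so your approach coincides with the paper's. Your supplementary direct sketch goes beyond what the paper attempts: the $a\geq0$ case (the exact equality $E^{\prime}_{(a,b)}=E_{(a,-b)}$ via $X_0X_1=qX_1X_0$ and $X_2X_1=q^{-1}X_1X_2$) is correct and complete, whereas the $a<0$ induction via $E^{\prime}_{(a-1,b)}=q^{\frac{1}{2}b}E^{\prime}_{(a,b)}X_{-1}$ would additionally require right-multiplication rules for $X_3$ (after expanding $X_{-1}$ through $E_{(-1,-4)}=q^{-2}X_3X_0^4$), which Lemma \ref{lem4.1} does not supply --- it only controls left multiplication by $X_3$ and $X_4$ --- so that part remains an outline; since you ultimately rest on the citation, as the paper does, this does not affect the correctness of the proposal.
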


Similarly, let $\psi:\ZZ^2\rightarrow\ZZ^2$ be a map defined by $\psi(a,b)=(-a-[-b]_+,b)$. It is easy to see that $\psi$ is a bijection.
\begin{lemma}\label{lempsi}
For each $(a,b)\in\ZZ^{2}$, we have that
$$
\mu_1E_{(a,b)}-E_{\psi(a,b)}\in q^{-\frac{1}{2}}\A_+.
$$
\end{lemma}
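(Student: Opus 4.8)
The plan is to mimic the strategy that (by the cited \cite[Theorem 3.1, Proposition 4.1]{BZ2014}) underlies Lemma \ref{lemvarphi}, but now for the mutation $\mu_1$ in the direction of the vertex attached to $X_1$, which moves the initial cluster $\{X_1,X_2\}$ to $\{X_3,X_2\}$ and hence the standard monomials $E_{(a,b)}$ (built from $X_3,X_1,X_2,X_0$) to the monomials $\mu_1 E_{(a,b)}$ (built from $X_4,X_2,X_3,X_1$). First I would unwind the definitions: write $\psi(a,b)=(-a-[-b]_+,b)$ and check directly that $\psi$ is a bijection of $\ZZ^2$ (its inverse sends $(a,b)$ to $(-a-[-b]_+,b)$ again, i.e.\ $\psi$ is an involution), so that the statement is at least well-posed. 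The substance is the containment $\mu_1E_{(a,b)}-E_{\psi(a,b)}\in q^{-\frac12}\A_+$.

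The key step is to express $\mu_1 E_{(a,b)}$ in terms of the $E_{(c,d)}$ using the multiplication rules of Lemma \ref{lem4.1}, Lemma \ref{lem4.3+} and Lemma \ref{lem4.3}, which describe exactly how left/right multiplication by $X_3$, $X_0$, $X_1$ and $X_4$ act on a standard monomial modulo $q^{-\frac12}\A_+$. Concretely, I would split into the cases $a\geq 0$ and $a<0$ (and, inside each, on the sign of $b$), because $[-a]_+$ and $[-b]_+$ change the shape of both $E_{(a,b)}$ and $\mu_1E_{(a,b)}$. When $a\geq 0$ one has $\psi(a,b)=(-a-[-b]_+,b)$, and $\mu_1 E_{(a,b)}=q^{-\frac12 ab}X_3^{[a]_+}\cdots$ should be rewritten by pushing the powers of $X_3$ (which is a genuine cluster variable, not an initial one, after mutation) past the others; the leading term is $E_{\psi(a,b)}$ and all correction terms lie in $q^{-\frac12}\A_+$ by the ``off-diagonal in $q^{-\frac12}\ZZ[q^{-\frac12}]$'' parts of Lemmas \ref{lem4.1}(2),(4),(5),(6). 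When $a<0$ the monomial $\mu_1E_{(a,b)}=q^{-\frac12 ab}X_4^{[-b]_+}X_2^{[b]_+}X_3^{[a]_+}X_1^{[-a]_+}$ already contains $X_3^0$ but a power $X_1^{-a}$, and I would instead iterate the $X_1$-multiplication identities (Lemma \ref{lem4.1}(3) together with the finer information in Lemma \ref{lemofX3}) to bring it to the form $E_{(-a-[-b]_+,b)}$ plus a tail in $q^{-\frac12}\A_+$; here the bookkeeping on which exponents $c,d$ can occur, supplied by Lemma \ref{lemofX3} and Lemma \ref{lem4.3}, is exactly what keeps the error inside the crystal lattice $\A_+$ with a strictly positive power of $q^{-\frac12}$.

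The main obstacle I anticipate is the same one that makes Lemma \ref{lem4.1} long: controlling the ``mixed'' region, i.e.\ when one of $a,b$ is positive and the other is negative, where the normal-ordering of $X_3^{[a]_+}$, $X_1^{[-a]_+}$, $X_0^{[-b]_+}$, $X_2^{[b]_+}$ produces many standard-monomial terms, and one must verify that every one of them is $\preceq$-dominated appropriately and carries a factor in $q^{-\frac12}\ZZ[q^{-\frac12}]$. I would handle this by an induction on $\min(-a,0)$ (peeling off one factor of $X_1$ at a time using Lemma \ref{lem4.1}(3) and (\ref{equ4.1.3v})), exactly as in the proof of Lemma \ref{lem4.3}, so that at each step the inductive hypothesis plus one application of the basic identities closes the argument. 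Once the containment is established for all $(a,b)$, the lemma follows, and (as with Lemma \ref{lemvarphi}) it will feed into the verification that $\mathcal S$ satisfies the hypotheses of Lusztig's Lemma (Theorem \ref{Lusztiglem}) and is therefore the triangular basis.
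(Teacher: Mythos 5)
Your overall strategy coincides with the paper's: observe that $\psi$ is an involution, split according to the signs of $a$ and $b$, and reduce everything to the single-factor multiplication rules of Lemma \ref{lem4.1} together with the exponent bookkeeping of Lemmas \ref{lemofX3}, \ref{lem4.3+} and \ref{lem4.3}. One small misplacement of emphasis: when $b\geq 0$ there are no correction terms at all, since $\mu_1E_{(a,b)}=E_{(-a,b)}$ and $\mu_1E_{(-a,b)}=E_{(a,b)}$ hold exactly for $a,b\geq0$; so ``pushing the powers of $X_3$ past the others modulo $q^{-\frac12}\A_+$'' is not where the content lies. All of the work is concentrated in $b<0$, where $\mu_1E_{(a,b)}$ carries the factor $X_4^{[-b]_+}$ of a non-initial cluster variable.

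The genuine gap is in your induction scheme. An induction on $\min(-a,0)$ alone, peeling off factors of $X_1$, leaves untouched the second exponent $[-b]_+$ on $X_4$, and Lemmas \ref{lem4.3+} and \ref{lem4.3} only control a \emph{single} left multiplication by $X_4$. The paper therefore runs a double induction, on $[a]_+$ (resp.\ $[-a]_+$) and on $[-b]_+$ simultaneously, with separate subcases according to which of the two exponents dominates, and --- crucially --- it carries along a strengthened inductive hypothesis: every term $kq^{-\frac12\alpha}E_{(c,d)}$ of the error satisfies explicit bounds of the form $c\le a_1$ if $d\ge0$ and $c-d\le a_1$ if $d<0$ (together with lower bounds on $c$ in some subcases). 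These bounds are precisely what guarantee that the rescaling exponent $\alpha-a_1-c$ (or $\alpha-a_1-c+d$) produced when the next factor of $X_4$ hits $E_{(c,d)}$ remains strictly positive, so that the accumulated error stays inside $q^{-\frac12}\A_+$. They are not consequences of the bare containment $\mu_1E_{(a,b)}-E_{\psi(a,b)}\in q^{-\frac12}\A_+$, so they must be formulated and propagated as part of the induction; without them the inductive step in the $X_4$-direction does not close, and that step is the bulk of the paper's proof. You cite the right auxiliary lemmas, but the induction in which they can actually be applied is not the one you set up.
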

\begin{proof}
Let $a_1,a_2\in\ZZ_{\geq0}$, it is clear that
$$
\mu_1E_{(a_1,a_2)}=q^{-\frac{1}{2}a_1a_2}X_{2}^{a_2}X_{3}^{a_1}=E_{(-a_1,a_2)}\text{~and~}
\mu_1E_{(-a_1,a_2)}=q^{-\frac{1}{2}a_1a_2}X_{1}^{a_1}X_{2}^{a_2}=E_{(a_1,a_2)}.
$$
We need only consider $\mu_1E_{(a_1,-a_2)}-E_{(-a_1-a_2,-a_2)}$ and $\mu_1E_{(-a_1,-a_2)}-E_{(a_1-a_2,-a_2)}$.

(1) When $a_1=1,a_2=0$, we have that $\mu_1E_{(1,0)}=X_3=E_{(-1,0)}$. When $a_1=0,a_2=1$,
$$
\mu_1E_{(0,-1)}-E_{(-1,-1)}=X_4-q^{-\frac{1}{2}}X_3X_0=-q^{-2}X_{2}^{3}=-q^{-2}E_{(0,3)}.
$$
When $a_1=1,a_2=1$, we have that
$$
\mu_1E_{(1,-1)}-E_{(-2,-1)}=q^{-\frac{1}{2}}X_3X_4-q^{-1}X_{3}^{2}X_0=-q^{-\frac{11}{2}}X_{2}^{3}X_3=-q^{-4}E_{(-1,3)}.
$$

We will prove the statement by induction on both $a_1$ and $a_2$. Assume that
\begin{align*}
&\mu_1E_{(a_1,-a_2)}-E_{(-a_1-a_2,-a_2)}\\
=&q^{-\frac{1}{2}a_1a_2}X_{3}^{a_1}X_{4}^{a_2}-q^{-\frac{1}{2}(a_1+a_2)a_2}X_{3}^{a_1+a_2}X_{0}^{a_2}
=:C_1\in \bigoplus\limits_{(a^{\prime},b^{\prime})\preceq(-a_1-a_2,-a_2)}q^{-\frac{1}{2}} \ZZ[q^{-\frac{1}{2}}]E_{(a^{\prime},b^{\prime})},
\end{align*}
i.e., every term $kq^{-\frac{1}{2}\alpha}E_{(c,d)}$ ($k\in\ZZ\setminus\{0\}$) in $C_1$ satisfies $[-c]_+\leq a_1+a_2$ and $[-d]_+\leq a_2$.

(i) By Lemma \ref{lem4.1}(2), we have that
$$
\mu_1E_{(a_1+1,-a_2)}=q^{-\frac{1}{2}a_2}X_3E_{(-a_1-a_2,-a_2)}+q^{-\frac{1}{2} a_2}X_3C_1 =E_{(-a_1-a_2-1,-a_2)}+q^{-\frac{1}{2}a_2}X_3C_1
$$
and $q^{-\frac{1}{2}(\alpha+a_2)}X_3E_{(c,d)}=q^{-\frac{1}{2}(\alpha+a_2+d)}(q^{\frac{1}{2}d}X_3E_{(c,d)})$. Note that  $\alpha+a_2+d\geq\alpha>0$, by Lemma \ref{lem4.1}(2), we have that $q^{-\frac{1}{2}(\alpha+a_2)}X_3E_{(c,d)}\in \bigoplus\limits_{(a^{\prime},b^{\prime})\preceq(c-1,d)}q^{-\frac{1}{2}}\ZZ[q^{-\frac{1}{2}}]E_{(a^{\prime},b^{\prime})}$.
Thus
$$
\mu_1E_{(a_1+1,-a_2)}-E_{(-a_1-a_2-1,-a_2)}\in \bigoplus\limits_{(a^{\prime},b^{\prime})\preceq(-a_1-a_2-1,-a_2)}q^{-\frac{1}{2}}\ZZ[q^{-\frac{1}{2}}] E_{(a^{\prime},b^{\prime})}.
$$

(ii) Note that
$$
\mu_1E_{(a_1,-a_2-1)}=q^{\frac{a_1}{2}}X_4(q^{-\frac{1}{2}a_1a_2}X_{3}^{a_1}X_{4}^{a_2}) =q^{\frac{a_1}{2}}X_4E_{(-a_1-a_2,-a_2)}+q^{\frac{a_1}{2}}X_4C_1.
$$
By Lemma \ref{lem4.1}(6),
$$
q^{\frac{a_1}{2}}X_4E_{(-a_1-a_2,-a_2)}\in E_{(-a_1-a_2-1,-a_2-1)}+ \bigoplus\limits_{(a^{\prime},b^{\prime})\preceq(-a_1-a_2,-a_2)}q^{-\frac{1}{2}}\ZZ[q^{-\frac{1}{2}}] E_{(a^{\prime},b^{\prime})}.
$$

When $d\geq0$, we have that $q^{-\frac{1}{2}(\alpha-a_1)}X_4E_{(c,d)}=q^{-\frac{1}{2}(\alpha-a_1-c)}(q^{-\frac{1}{2}c}X_4E_{(c,d)})$. When $d<0$, we have that $q^{-\frac{1}{2}(\alpha-a_1)}X_4E_{(c,d)}=q^{-\frac{1}{2}(\alpha-a_1-c+d)}(q^{-\frac{1}{2}(c-d)}X_4E_{(c,d)})$. By Lemma \ref{lem4.3}, we know that $c\leq -a_1$ if $d\geq0$, and $c-d\leq -a_1$ if $d<0$. Thus $\alpha-a_1-c\geq \alpha>0$ if $d\geq0$, and $\alpha-a_1-c+d\geq \alpha>0$ if $d<0$. By using the induction hypothesis and Lemma \ref{lem4.1}(4), (5), (6), we have that
$$
q^{\frac{1}{2}a_1}X_4C_1\in \bigoplus\limits_{(a^{\prime},b^{\prime})\preceq(-a_1-a_2-1,-a_2-1)}q^{-\frac{1}{2}}\ZZ[q^{-\frac{1}{2}}] E_{(a^{\prime},b^{\prime})}.
$$
Hence $\mu_1E_{(a_1,-a_2-1)}-E_{(-a_1-a_2-1,-a_2-1)}\in \bigoplus\limits_{(a^{\prime},b^{\prime})\preceq(-a_1-a_2-1,-a_2-1)}q^{-\frac{1}{2}}\ZZ[q^{-\frac{1}{2}}] E_{(a^{\prime},b^{\prime})}$.

(2) When $a_1=0,a_2>0$, we have that
$$
\mu_1E_{(0,-a_2)}-E_{(-a_2,-a_2)}\in \bigoplus\limits_{(a^{\prime},b^{\prime})\preceq (-a_2,-a_2)}q^{-\frac{1}{2}}\ZZ[q^{-\frac{1}{2}}] E_{(a^{\prime},b^{\prime})}
$$
by (1). When $a_1>0,a_2=0$, we have that $\mu_1E_{(-a_1,0)}=X_{1}^{a_1}=E_{(a_1,0)}$.

When $a_1=a_2=1$,  we have that $\mu_1E_{(-1,-1)}=X_0+q^{-2}X_{2}^{3}=E_{(0,-1)}+q^{-2}E_{(0,3)}$.

When $a_1=1,a_2=2$, we have that $\mu_1E_{(-1,-2)}-E_{(-1,-2)}=q^{-4}E_{(-1,2)}-q^{-4}E_{(1,2)}$.

When $a_1=2,a_2=1$, we have that $\mu_1E_{(-2,-1)}-E_{(1,-1)}=q^{-4}E_{(1,3)}$.

When $a_1=2,a_2=2$, we have that
$$
\mu_1E_{(-2,-2)}-E_{(0,-2)}=(q^{-6}+q^{-2})E_{(0,2)}+q^{-8}E_{(0,6)} +(q^{-\frac{9}{2}}+q^{-\frac{7}{2}})E_{(1,2)}.
$$

When $a_1=3,a_2=1$, we have that $\mu_1E_{(-3,-1)}-E_{(2,-1)}=q^{-6}E_{(2,3)}$.

When $a_1=3,a_2=2$, we have that
$$
\mu_1E_{(-3,-2)}-E_{(1,-2)}=(q^{-8}+q^{-4})E_{(1,2)}+q^{-12}E_{(1,6)}+(q^{-\frac{13}{2}}+q^{-\frac{11}{2}})E_{(2,2)}.
$$

We will proceed the proof by induction on both $a_1$ and $a_2$.

(i) When $a_1\geq a_2\geq1$, we assume that
\begin{align*}
&\mu_1E_{(-a_1,-a_2)}-E_{(a_1-a_2,-a_2)}
=q^{-\frac{1}{2}a_1a_2}X_{4}^{a_2}X_{1}^{a_1}- q^{-\frac{1}{2}(a_1-a_2)a_2}X_{0}^{a_2}X_{1}^{a_1-a_2}\\
=:&C_2\in \bigoplus\limits_{(a^{\prime},b^{\prime})\preceq(0,-a_2)}q^{-\frac{1}{2}}\ZZ[q^{-\frac{1}{2}}]E_{(a^{\prime},b^{\prime})}
\end{align*}
and every term $k_1q^{-\frac{1}{2}\alpha}E_{(c,d)}$ ($k_1\in\ZZ\setminus\{0\}$) in $C_2$ satisfies that $c\geq0$, $c\leq a_1$ if $d\geq0$ and $c-d\leq a_1$ if $d<0$. Note that
\begin{align*}
&\mu_1E_{(-a_1-1,-a_2)}=q^{-\frac{1}{2}a_2}\mu_1E_{(-a_1,-a_2)}X_1 =q^{-\frac{1}{2}a_2}E_{(a_1-a_2,-a_2)}X_1 +q^{-\frac{1}{2}a_2}C_2X_1 \\
=&E_{(a_1+1-a_2,-a_2)}+q^{-\frac{1}{2}a_2}C_2X_1.
\end{align*}
By (\ref{equ4.1.3i}) and (\ref{equ4.1.3ii}), $q^{-\frac{1}{2}(a_2+\alpha)}E_{(c,d)}X_1=q^{-\frac{1}{2}(a_2+\alpha+d)} E_{(c+1,d)}$.

Note that $a_2+\alpha+d\geq\alpha>0$ and $c+1>c\geq0$. Thus
$$
q^{-\frac{1}{2}a_2}C_2X_1\in \bigoplus\limits_{(a^{\prime},b^{\prime})\preceq(0,-a_2)} q^{-\frac{1}{2}}\ZZ[q^{-\frac{1}{2}}]E_{(a^{\prime},b^{\prime})} \subseteq q^{-\frac{1}{2}}\A_+.
$$
For $a_1\geq a_2\geq1$, it follows that
$$
\mu_1E_{(-a_1-1,-a_2)}- E_{(a_1+1-a_2,-a_2)} \in \bigoplus\limits_{(a^{\prime},b^{\prime})\preceq(0,-a_2)}q^{-\frac{1}{2}}\ZZ[q^{-\frac{1}{2}}] E_{(a^{\prime},b^{\prime})} \subseteq q^{-\frac{1}{2}}\A_+
$$
and every term $k_1^{\prime}q^{-\frac{1}{2}\alpha^{\prime}}E_{(c^{\prime},d^{\prime})}=k_1q^{-\frac{1}{2}(a_2+\alpha+d)}E_{(c+1,d)}$ in $\mu_1E_{(-a_1-1,-a_2)}- E_{(a_1+1-a_2,-a_2)}$ satisfies that $c^\prime=c+1>0$, $c^\prime\leq a_1+1$ if $d^\prime=d\geq0$ and $c^\prime-d^\prime\leq a_1+1$ if $d^\prime=d<0$.

(ii) When $a_2>a_1\geq1$, we assume that
\begin{align*}
&\mu_1E_{(-a_1,-a_2)}-E_{(a_1-a_2,-a_2)} \\
=&q^{-\frac{1}{2}a_1a_2}X_{4}^{a_2}X_{1}^{a_1}- q^{-\frac{1}{2}(a_2-a_1)a_2}X_{3}^{a_2-a_1}X_{0}^{a_2}
=:C_3\in \bigoplus\limits_{(a^{\prime},b^{\prime})\preceq(a_1-a_2,-a_2)}q^{-\frac{1}{2}}\ZZ[q^{-\frac{1}{2}}]E_{(a^{\prime},b^{\prime})}.
\end{align*}
and every term $k_2q^{-\frac{1}{2}\beta}E_{(e,f)}$ ($k_2\in\ZZ\setminus\{0\}$) in $C_3$ satisfies that $e\leq a_1$ if $f\geq0$ and $e-f\leq a_1$ if $f<0$. Note that
\begin{align*}
\mu_1E_{(-a_1-1,-a_2)}=q^{-\frac{1}{2}a_2}\mu_1E_{(-a_1,-a_2)}X_1=q^{-\frac{1}{2}a_2}E_{(a_1-a_2,-a_2)}X_1 +q^{-\frac{1}{2}a_2}C_3X_1.
\end{align*}
By Lemma \ref{lem4.1}(3), we obtain that
$$
q^{-\frac{1}{2}a_2}E_{(a_1-a_2,-a_2)}X_1\in E_{(a_1+1-a_2,-a_2)}+\bigoplus\limits_{(a^{\prime},b^{\prime})\preceq(a_1+1-a_2,4-a_2)}q^{-\frac{1}{2}} \ZZ[q^{-\frac{1}{2}}]E_{(a^{\prime},b^{\prime})}.
$$
Note that $a_2+\beta+f>0$ since $[-f]_+\leq a_2$ and $\beta>0$. Then we have that
\begin{align*}
&q^{-\frac{1}{2}(a_2+\beta)}E_{(e,f)}X_1\in q^{-\frac{1}{2}(a_2+\beta+f)}E_{(e+1,f)} +\bigoplus\limits_{(a^{\prime},b^{\prime})\preceq(e+1,f+4)} q^{-\frac{1}{2}}\ZZ[q^{-\frac{1}{2}}] E_{(a^{\prime},b^{\prime})}\\
\subseteq& \bigoplus\limits_{(a^{\prime},b^{\prime})\preceq(e+1,f)} q^{-\frac{1}{2}}\ZZ[q^{-\frac{1}{2}}]E_{(a^{\prime},b^{\prime})}.
\end{align*}
We obtain that $q^{-\frac{1}{2}a_2}C_3X_1\in \bigoplus\limits_{(a^{\prime},b^{\prime})\preceq(a_1+1-a_2,-a_2)} q^{-\frac{1}{2}}\ZZ[q^{-\frac{1}{2}}]E_{(a^{\prime},b^{\prime})}$.

Let $k_{2}^{\prime}q^{-\frac{1}{2}\beta^\prime}E_{(e^\prime,f^\prime)}$ ($k_2\in\ZZ\setminus\{0\}$) be a term in $\mu_1E_{(-a_1-1,-a_2)}-E_{(a_1-a_2+1,-a_2)}$.

When $k_{2}^{\prime}q^{-\frac{1}{2}\beta^\prime}E_{(e^\prime,f^\prime)}$ is a term in $q^{-\frac{1}{2}a_2}E_{(a_1-a_2,-a_2)}X_1-E_{(a_1+1-a_2,-a_2)}$, we obtain that $e^\prime\leq a_1+1$ if $f^\prime\geq0$ and $e^\prime-f^\prime\leq a_1+1$ if $f^\prime<0$, by Lemma \ref{lemofX3}.

Now we consider the case that $k_{2}^{\prime}q^{-\frac{1}{2}\beta^\prime}E_{(e^\prime,f^\prime)}$ is a term in $k_2q^{-\frac{1}{2}(a_2+\beta)}E_{(e,f)}X_1$, where $k_2q^{-\frac{1}{2}\beta} E_{(e,f)}$ is a term in $C_3$. Note that $a_2+\beta +f\geq\beta>0$ and
$$
q^{-\frac{1}{2}(a_2+\beta)}E_{(e,f)}X_1=q^{-\frac{1}{2}(a_2+\beta+f)}(q^{\frac{1}{2}f}E_{(e,f)}X_1).
$$
When $e\geq0$, we have $(e^{\prime},f^{\prime})=(e+1,f)$ by (\ref{equ4.1.3i}) and (\ref{equ4.1.3ii}). When $e<0$ and $f\geq0$, we have $(e^{\prime},f^{\prime})=(e+1,f)$ or $(e+1,f+4)$ by (\ref{equ4.1.3iii}). When $e<0$ and $f<0$, we obtain that $e^\prime\leq e-f+1\leq a_1+1$ if $f^\prime\geq0$ and $e^\prime-f^\prime\leq e-f+1\leq a_1+1$ if $f^\prime<0$ by Lemma \ref{lemofX3}.

Hence
$$
\mu_1E_{(-a_1-1,-a_2)}-E_{(a_1-a_2+1,-a_2)}\in\bigoplus\limits_{(a^{\prime},b^{\prime})\preceq(a_1-a_2+1,-a_2)} q^{-\frac{1}{2}}\ZZ[q^{-\frac{1}{2}}]E_{(a^{\prime},b^{\prime})}\subseteq q^{-\frac{1}{2}}A_+
$$
for $a_2>a_1\geq1$, and every term $k_{2}^{\prime}q^{-\frac{1}{2}\beta^\prime}E_{(e^\prime,f^\prime)}$ in $\mu_1E_{(-a_1-1,-a_2)}-E_{(a_1+1-a_2,-a_2)}$ satisfies that $e^\prime\leq a_1+1$ if $f^\prime\geq0$, $e^\prime-f^\prime\leq a_1+1$ if $f^\prime<0$.

(iii) When $a_1>a_2\geq1$, we assume that
\begin{align*}
\mu_1E_{(-a_1,-a_2)}-E_{(a_1-a_2,-a_2)}=:C_4\in \bigoplus\limits_{(a^{\prime},b^{\prime})\preceq(0,-a_2)}q^{-\frac{1}{2}}\ZZ[q^{-\frac{1}{2}}] E_{(a^{\prime},b^{\prime})}\subseteq q^{-\frac{1}{2}}\A_+
\end{align*}
and every term $k_3q^{-\frac{1}{2}\gamma}E_{(g,h)}$ ($k_3\in\ZZ\setminus\{0\}$) in $C_4$ satisfies that $g\geq a_1-a_2$, $g\leq a_1$ if $h\geq0$ and $g-h\leq a_1$ if $h<0$. We obverse that
$$
\mu_1E_{(-a_1,-a_2-1)}=q^{-\frac{1}{2}a_1}X_4\mu_1E(-a_1,-a_2)=q^{-\frac{1}{2}a_1}X_4E_{(a_1-a_2,-a_2)} +q^{-\frac{1}{2}a_1}X_4C_4
$$
and $q^{-\frac{1}{2}a_1}X_4E_{(a_1-a_2,-a_2)}\in E_{(a_1-a_2-1,-a_2-1)}+ \bigoplus\limits_{(a^{\prime},b^{\prime})\preceq(0,3-a_2)}q^{-\frac{1}{2}}\ZZ[q^{-\frac{1}{2}}]E_{(a^{\prime},b^{\prime})}$ by Lemma \ref{lem4.1}(5).

Similar to the proof of Case (2)(ii), according to Lemma \ref{lem4.3+} and Lemma \ref{lem4.3}, we can prove that every term $k^{\prime}_{3}q^{-\frac{1}{2}\gamma^{\prime}}E_{(g^{\prime},h^{\prime})}$ in $\mu_1E_{(-a_1,-a_2-1)}-E_{(a_1-a_2-1,-a_2-1)}$ satisfies the conditions that $g^\prime\geq a_1-a_2-1\geq0$, $g^{\prime}\leq a_1$ if $h^{\prime}\geq0$, $g^{\prime}-h^{\prime}\leq a_1$ if $h^{\prime}<0$ and
$$
\mu_1E_{(-a_1,-a_2-1)}-E_{(a_1-a_2-1,-a_2-1)}\in \bigoplus\limits_{(a^{\prime},b^{\prime})\preceq(0,-a_2-1)}q^{-\frac{1}{2}}\ZZ[q^{-\frac{1}{2}}] E_{(a^{\prime},b^{\prime})}\subseteq q^{-\frac{1}{2}}\A_+
$$
for $a_1>a_2\geq1$.

(iv) When $a_2\geq a_1\geq1$, we assume that
\begin{align*}
&\mu_1E_{(-a_1,-a_2)}-E_{(a_1-a_2,-a_2)}=q^{-\frac{1}{2}a_1a_2}X_{4}^{a_2}X_{1}^{a_1}- q^{-\frac{1}{2}(a_2-a_1)a_2}X_{3}^{a_2-a_1}X_{0}^{a_2}\\
=:&C_5\in \bigoplus\limits_{(a^{\prime},b^{\prime})\preceq(a_1-a_2,-a_2)}q^{-\frac{1}{2}}\ZZ[q^{-\frac{1}{2}}] E_{(a^{\prime},b^{\prime})}\subseteq q^{-\frac{1}{2}}\A_+
\end{align*}
and every term $k_4q^{-\frac{1}{2}\delta}E_{(i,j)}$ ($k_4\in\ZZ\setminus\{0\}$) in $C_5$ satisfies that $i\leq a_1$ if $j\geq0$ and $i-j\leq a_1$ if $j<0$. Then
$$
\mu_1E_{(-a_1,-a_2-1)}=q^{-\frac{1}{2}a_1}X_4\mu_1E(-a_1,-a_2)=q^{-\frac{1}{2}a_1}X_4E_{(a_1-a_2,-a_2)} +q^{-\frac{1}{2}a_1}X_4C_5
$$
and $q^{-\frac{1}{2}a_1}X_4E_{(a_1-a_2,-a_2)}\in E_{(a_1-a_2-1,-a_2-1)} +\bigoplus\limits_{(a^\prime,b^\prime)\preceq(a_1-a_2,-a_2)} q^{-\frac{1}{2}}\ZZ[q^{-\frac{1}{2}}]E_{(a^\prime,b^\prime)}$ by Lemma \ref{lem4.1}(6).

Let $k_{4}^{\prime}q^{-\frac{1}{2}\delta^\prime}E_{(i^\prime,j^\prime)}$ ($k_{4}^{\prime}\in\ZZ\setminus\{0\}$) be a term in $\mu_1E_{(-a_1,-a_2-1)}-E_{(a_1-a_2-1,-a_2-1)}$. Similar to the proof of Case (2)(ii), according to Lemma \ref{lem4.1}(5), (6), Lemma \ref{lem4.3+} and Lemma \ref{lem4.3}, it follows that
$$
q^{-\frac{1}{2}a_1}X_4C_5\in\bigoplus\limits_{(a^\prime,b^\prime)\preceq(a_1-a_2-1,-a_2-1)} q^{-\frac{1}{2}}\ZZ[q^{-\frac{1}{2}}]E_{(a^\prime,b^\prime)}
$$
and every term $k_{4}^{\prime}q^{-\frac{1}{2}\delta^\prime}E_{(i^\prime,j^\prime)}$ in $q^{-\frac{1}{2}a_1}X_4C_5$ satisfies that $i^\prime\leq a_1$ if $j^{\prime}\geq0$ and $i^\prime-j^\prime\leq a_1$ if $j^{\prime}<0$.

In a summary, we obtain that
$$
\mu_1E_{(-a_1,-a_2-1)}-E_{(a_1-a_2-1,-a_2-1)}\in\bigoplus\limits_{(a^\prime,b^\prime)\preceq(a_1-a_2-1,-a_2-1)} q^{-\frac{1}{2}}\ZZ[q^{-\frac{1}{2}}]E_{(a^\prime,b^\prime)}\subseteq q^{-\frac{1}{2}}\A_+
$$
and every term $k_{4}^{\prime}q^{-\frac{1}{2}\delta^{\prime}}E_{(i^\prime,j^\prime)}$ in $\mu_1E_{(-a_1,-a_2-1)}-E_{(a_1-a_2-1,-a_2-1)}$ satisfies that $i^\prime\leq a_1$ if $j^{\prime}\geq0$ and $i^\prime-j^\prime\leq a_1$ if $j^{\prime}<0$.

This completes the proof of Lemma \ref{lempsi}.
\end{proof}

For $n\in\ZZ$, the notation $\alpha(n)$ is defined by
\begin{align*}
\langle n\rangle\alpha(n)=\left\{
\begin{aligned}
(2-n,2(3-n)),&~\rm{if}~n\geq2; \\
(n,2(n-1)),{\hskip 0.5cm}&~\rm{if}~n\leq1.
\end{aligned}
\right.
\end{align*}
Recall that $C_{(a,b)}$ is the element of the canonical triangular basis for any $a,b\in\ZZ$.

\begin{proposition}\label{propstandardmonomial}
For $n\in\ZZ$ and $a_1,a_2\in\ZZ_{\geq0}$, we have
$$
C_{a_1\alpha(n)+a_2\alpha(n+1)}=q^{-\frac{1}{2}a_1a_2}X_{n}^{a_1}X_{n+1}^{a_2}.
$$
\end{proposition}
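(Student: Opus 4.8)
The plan is to recognize the cluster monomial as the unique element produced by Lusztig's Lemma. Write $Y_n=q^{-\frac{1}{2}a_1a_2}X_n^{a_1}X_{n+1}^{a_2}$ and $u_n=a_1\alpha(n)+a_2\alpha(n+1)$. Since every pair $\{X_m,X_{m+1}\}$ is a cluster with $X_mX_{m+1}=qX_{m+1}X_m$, the element $Y_n$ belongs to the bar-invariant basis $\mathcal{B}$ of \cite{cds}, so $\overline{Y_n}=Y_n$; and $\A_q(1,4)$ satisfies \eqref{conditionlusztig} as recalled above. Hence, by the uniqueness part of Theorem~\ref{Lusztiglem}, the proposition will follow once we show
$$
Y_n-E_{u_n}\in q^{-\frac{1}{2}}\A_+=\bigoplus_{(a,b)\in\ZZ^2}q^{-\frac{1}{2}}\ZZ[q^{-\frac{1}{2}}]E_{(a,b)} ,
$$
for this exhibits $Y_n$ as the unique bar-invariant element congruent to $E_{u_n}$ modulo $q^{-\frac{1}{2}}\A_+$, i.e. $Y_n=C_{u_n}$.

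First I would settle a window $-1\le n\le 3$. For $n\in\{0,1,2\}$ the commutation relations $X_0X_1=qX_1X_0$, $X_1X_2=qX_2X_1$, $X_2X_3=qX_3X_2$ give $Y_n=E_{u_n}$ outright (e.g. $Y_1=q^{-\frac{1}{2}a_1a_2}X_1^{a_1}X_2^{a_2}=E_{(a_1,a_2)}$ and $u_1=(a_1,a_2)$). For $n=3$ one checks $Y_3=q^{-\frac{1}{2}a_1a_2}X_3^{a_1}X_4^{a_2}=\mu_1E_{(a_1,-a_2)}$, whence Lemma~\ref{lempsi} gives $Y_3-E_{\psi(a_1,-a_2)}\in q^{-\frac{1}{2}}\A_+$ with $\psi(a_1,-a_2)=(-a_1-a_2,-a_2)=u_3$; dually $Y_{-1}=q^{-\frac{1}{2}a_1a_2}X_{-1}^{a_1}X_0^{a_2}=E'_{(-a_1,a_2)}$, so Lemma~\ref{lemvarphi} gives $Y_{-1}-E_{\varphi(-a_1,a_2)}\in q^{-\frac{1}{2}}\A_+$ with $\varphi(-a_1,a_2)=(-a_1,-4a_1-a_2)=u_{-1}$. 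In each case the leading standard monomial is exactly $E_{u_n}$, which is how $\alpha(n)$ is fixed.

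The remaining values I would reach by induction on $n$, increasing from $n=3$ and decreasing from $n=-1$. The point of the inductive step is to propagate Lemmas~\ref{lempsi} and \ref{lemvarphi} from a seed $\{X_m,X_{m+1}\}$ to the adjacent one: modulo $q^{-\frac{1}{2}}\A_+$, the standard monomial $Y_n$ of the seed $\{X_n,X_{n+1}\}$ is congruent to $E_w$ where $w$ is obtained from $(a_1,a_2)$ by the bijection of $\ZZ^2$ recording the passage of that seed back to $\{X_1,X_2\}$. Concretely, passing one step amounts to left/right multiplication by a cluster variable (and, via Lemma~\ref{lem1} together with $X_\delta\in\ZZ[q^{\pm\frac{1}{2}}]\langle X_0,X_1,X_2,X_3\rangle$, to realizing $X_{n+2}$ or $X_{n-1}$ inside the standard-monomial span at each stage); Lemmas~\ref{lem4.1}, \ref{lemofX3}, \ref{lem4.3+} and \ref{lem4.3} describe exactly how multiplication by the boundary variables $X_0,X_1,X_3,X_4$ acts on a standard monomial $E_{(c,d)}$, namely it moves the leading index by a fixed shift and adds a remainder supported in a controlled region. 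Feeding the inductive hypothesis $Y_n-E_{u_n}\in q^{-\frac{1}{2}}\A_+$ through one such step, the leading term passes to $E_{u_{n\pm1}}$ — the identity between the new leading index and $u_{n\pm1}$ being forced by additivity of leading indices under products and by the definition of $\alpha$ — while the remainder stays in $q^{-\frac{1}{2}}\A_+$.

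The hard part will be this last propagation. After one mutation the remainder is a $\ZZ[q^{-\frac{1}{2}}]$-combination of standard monomials each already carrying a factor $q^{-\frac{1}{2}}$, and one must ensure that applying the next generator-multiplication cannot, through a cancellation, lift a coefficient into $q^{0}\ZZ[q^{\frac{1}{2}}]$ and so out of $q^{-\frac{1}{2}}\A_+$. This is precisely why Lemmas~\ref{lem4.1}--\ref{lem4.3} record not only the leading shift but the exact region in which the lower standard monomials may lie: on that region the relevant multiplications are themselves $q^{-\frac{1}{2}}$-triangular, so the containment is preserved. Put differently, the inductive step reduces to the stability of the crystal lattice $\A_+$ under the mutations linking consecutive clusters, and it is exactly the package of Lemmas~\ref{lem4.1}--\ref{lem4.3}, together with the bijectivity of $\varphi$ and $\psi$ used to rename indices, that delivers this stability; granting it, a routine index bookkeeping identifies the leading index at each stage with $u_{n\pm1}$ and the induction closes.
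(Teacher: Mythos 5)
Your framing is the right one (bar-invariance of the cluster monomial plus a congruence $Y_n-E_{u_n}\in q^{-\frac{1}{2}}\A_+$ plus uniqueness in Theorem~\ref{Lusztiglem}), and your base window is correct: for $n\in\{0,1,2\}$ the quasi-commutation gives $Y_n=E_{u_n}$ outright, and for $n=3$, $n=-1$ the identifications $Y_3=\mu_1E_{(a_1,-a_2)}$ and $Y_{-1}=E^{\prime}_{(-a_1,a_2)}$ reduce the congruence to Lemmas~\ref{lempsi} and \ref{lemvarphi}, with $\psi(a_1,-a_2)=u_3$ and $\varphi(-a_1,a_2)=u_{-1}$ as you say. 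The gap is in the inductive step for $|n|$ beyond this window. Passing from the cluster $\{X_n,X_{n+1}\}$ to $\{X_{n+1},X_{n+2}\}$ does not amount to multiplying $Y_n$ by a cluster variable: $Y_{n+1}$ is a new monomial with new exponents, and to control its standard-monomial expansion you would need an analogue of the entire multi-case Lemma~\ref{lempsi} for every successive cluster, since Lemmas~\ref{lem4.1}--\ref{lem4.3} only describe multiplication by the boundary variables $X_0,X_1,X_3,X_4$ acting on standard monomials of the initial seed. Your own phrasing (``the hard part will be this last propagation'', ``a routine index bookkeeping'') concedes exactly the step that is not routine and is not supplied.

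The paper closes this gap without any further triangularity computations by exploiting the shift automorphism $\sigma_{\pm2}$. One first checks $\sigma_{-2}(\mu_1E_{(a,b)})=E^{\prime}_{(a,b)}$, so that Lemmas~\ref{lempsi} and \ref{lemvarphi} combine formally to give
\begin{equation*}
E_{\varphi\psi(a,b)}-\sigma_{-2}(E_{(a,b)})\in q^{-\frac{1}{2}}\A_+ \quad\text{and}\quad \sigma_{2}(E_{(a,b)})-E_{\psi\varphi(a,b)}\in q^{-\frac{1}{2}}\A_+ .
\end{equation*}
Since $\sigma_{\pm2}$ commutes with the bar-involution, uniqueness in Theorem~\ref{Lusztiglem} then yields $\sigma_{-2}(C_{(a,b)})=C_{\varphi\psi(a,b)}$ and $\sigma_{2}(C_{(a,b)})=C_{\psi\varphi(a,b)}$; applying powers of $\sigma_{\pm2}$ to the three base identities $C_{(a_1,a_2)}=E_{(a_1,a_2)}$, $C_{(a_1,-a_2)}=E_{(a_1,-a_2)}$, $C_{(-a_1,a_2)}=E_{(-a_1,a_2)}$ sweeps out all clusters $\{X_n,X_{n+1}\}$ at once, and the iterates of $\varphi\psi$ and $\psi\varphi$ on the cluster-monomial cones are exactly what the vectors $\alpha(n)$ encode. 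If you replace your mutation-by-mutation induction with this automorphism argument (which uses only the two one-step Lemmas you already invoke), your proof becomes complete; as written, the induction beyond $n=3$ and $n=-1$ is not established.
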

\begin{proof}
Through a direct calculation, we obtain that $\sigma_{-2}(\mu_1E_{(a,b)})=E^{\prime}_{(a,b)}$. Recall that $\varphi$ and $\psi$ are bijective. By Lemma \ref{lempsi}, $\mu_1E_{\psi(a,b)}-E_{(a,b)}\in q^{-\frac{1}{2}}\A_+$. Thus $E^{\prime}_{\psi(a,b)}-\sigma_{-2}(E_{(a,b)})\in q^{-\frac{1}{2}}\A_+$. By Lemma \ref{lemvarphi}, we obtain that
\begin{equation}\label{equ4.1}
E_{\varphi\psi(a,b)}-\sigma_{-2}(E_{(a,b)})\in q^{-\frac{1}{2}}\A_+.
\end{equation}

By Lemma \ref{lempsi}, $E^{\prime}_{(a,b)}-\sigma_{-2}(E_{\psi(a,b)})\in q^{-\frac{1}{2}}\A_+$. It follows that $E_{\varphi(a,b)}-\sigma_{-2}(E_{\psi(a,b)})\in q^{-\frac{1}{2}}\A_+$ and $E_{(a,b)}-\sigma_{-2}(E_{\psi\varphi(a,b)})\in q^{-\frac{1}{2}}\A_+$. Hence
\begin{equation}\label{equ4.2}
\sigma_{2}(E_{(a,b)})-E_{\psi\varphi(a,b)}\in q^{-\frac{1}{2}}\A_+.
\end{equation}

Using (\ref{equ4.1}) and (\ref{equ4.2}), it follows that
\begin{equation}\label{equ4.3}
\sigma_{-2}(C_{(a,b)})=C_{\varphi\psi(a,b)}=C_{(-a-[-b]_+,-4[a+[-b]_+]_+-b)}
\end{equation}
and
\begin{equation}\label{equ4.4}
\sigma_{2}(C_{(a,b)})=C_{\psi\varphi(a,b)}=C_{(-a-[4[-a]_++b]_+,-4[-a]_+-b)}.
\end{equation}

According to the definition of the triangular basis, it follows that
\begin{align}\label{equtribasis3}
\left\{
\begin{aligned}
&C_{(a_1,a_2)}=E_{(a_1,a_2)}=q^{-\frac{1}{2}a_1a_2}X_{1}^{a_1}X_{2}^{a_2}; \\
&C_{(a_1,-a_2)}=E_{(a_1,-a_2)}=q^{-\frac{1}{2}a_1a_2}X_{0}^{a_2}X_{1}^{a_1};\\
&C_{(-a_1,a_2)}=E_{(-a_1,a_2)}=q^{-\frac{1}{2}a_1a_2}X_{2}^{a_2}X_{3}^{a_1}
\end{aligned}
\right.
\end{align}
for $a_1,a_2\in\ZZ_{\geq0}$

From (\ref{equ4.3}), (\ref{equ4.4}) and (\ref{equtribasis3}), it is deduced that $C_{a_1\alpha(n)+a_2\alpha(n+1)}=q^{-\frac{1}{2}a_1a_2}X_{n}^{a_1}X_{n+1}^{a_2}$ for $a_1,a_2\in\ZZ_{\geq0}$ and $n\in\ZZ$.

\end{proof}

Now we need only consider $C_{(-n,-2n)}$ for positive integers $n$.
\begin{lemma}\label{lemsn}
If $n$ is a positive integer, then
\begin{align}\label{equsn}
S_{n}(X_\de)
=&q^{-n}X_{n+2}^{\langle n\rangle}X_{0}^{2}-q^{-(n+2)}X_{n+1}^{\langle n+1\rangle}X_1 -q^{-(n+1)}(q^{-\frac{1}{2}}+q^{\frac{1}{2}}) \sum\limits_{k=1}^{\lfloor \frac{n}{2}\rfloor+1}X_{n+3-2k}^{\langle n+1\rangle}\nonumber \\
&-\sum\limits_{k\geq1}kq^{-2k}(q^{-\frac{1}{2}}+q^{\frac{1}{2}})^{2}S_{n-2k}(X_\de).
\end{align}
\end{lemma}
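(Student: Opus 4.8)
The plan is to proceed by induction on $n$, using the three-term recurrence $S_{n+1}(X_\delta) = S_n(X_\delta)X_\delta - S_{n-1}(X_\delta)$ for the Chebyshev polynomials $S_n$, and reducing everything to identities that follow from Theorem~\ref{theorem2} together with the basic exchange and transition relations already recorded (in particular $X_{2m}X_\delta = q^{-\frac{1}{2}}X_{2m-2}+q^{\frac{1}{2}}X_{2m+2}$ and the expansions of products $X_j X_k$ in Theorem~\ref{theorem2}(2)--(4)). First I would check the base cases $n=1$ and $n=2$ by direct computation: for $n=1$ one has $S_1(X_\delta)=X_\delta$, and the claimed right-hand side should reduce to the formula for $X_\delta$ via Lemma~\ref{lem1} (rewriting $X_3^{\langle 3\rangle}X_0^2$ etc.\ in terms of the cluster $\{X_1,X_2,X_3,X_4\}$ using the exchange relations); for $n=2$ one uses $S_2(X_\delta)=X_\delta^2-1$ and the multiplication formulas to expand. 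These base computations also fix the precise powers of $q$ and the coefficient $(q^{-1/2}+q^{1/2})$ appearing in the statement.

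For the inductive step, I would multiply the induction hypothesis for $S_n(X_\delta)$ by $X_\delta$ on the right, and subtract the hypothesis for $S_{n-1}(X_\delta)$. The terms $q^{-n}X_{n+2}^{\langle n\rangle}X_0^2$ and $q^{-(n+2)}X_{n+1}^{\langle n+1\rangle}X_1$ produce, upon right multiplication by $X_\delta$, shifted versions of the same type of terms (using $X_{n+2}^{\langle n\rangle}X_0^2 \cdot X_\delta$ expanded via the even/odd cases of Theorem~\ref{theorem2}(1) applied to the factor involving $X_\delta$, together with $X_0 X_\delta$-type relations); similarly the finite sum $\sum_k X_{n+3-2k}^{\langle n+1\rangle}$ and the Chebyshev tail $\sum_k k q^{-2k}(q^{-1/2}+q^{1/2})^2 S_{n-2k}(X_\delta)$ transform predictably, the latter using $S_{n-2k}(X_\delta)X_\delta = S_{n-2k+1}(X_\delta)+S_{n-2k-1}(X_\delta)$. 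The heart of the argument is then a bookkeeping lemma: after collecting all contributions, the coefficient of each $S_m(X_\delta)$ must match $m' q^{-2m'}(q^{-1/2}+q^{1/2})^2$ with $m'$ the appropriate index $\frac{n+1-m}{2}$, and the ``honest'' Laurent terms (those not of Chebyshev type) must reorganize into exactly $q^{-(n+1)}X_{n+3}^{\langle n+1\rangle}X_0^2 - q^{-(n+3)}X_{n+2}^{\langle n+2\rangle}X_1$ minus the new finite $X$-sum. I would parity-split on $n$ even versus odd, since $\langle n\rangle$ and the floor $\lfloor n/2\rfloor$ behave differently, and treat the two cases in parallel.

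The main obstacle I anticipate is the coefficient identity for the Chebyshev tail: tracking how $k q^{-2k}$ from the $S_n$-term, $(k-1)q^{-2(k-1)}$ from the $S_{n-1}$-term, and the cross-term contributions from multiplying the genuinely Laurent part of $S_n(X_\delta)$ by $X_\delta$ (which itself produces new $S_m(X_\delta)$'s via the quadratic relations $X_j^{\langle j\rangle}X_k \cdot X_\delta$) combine to give $(k+1)q^{-2(k+1)}$ — getting the telescoping right, including the ``boundary'' contribution $S_0 = 1$ and the doubled constant term when an index hits zero, is where sign and power-of-$q$ errors are easy to make. A secondary difficulty is verifying that the finite sum $\sum_{k=1}^{\lfloor n/2\rfloor+1} X_{n+3-2k}^{\langle n+1\rangle}$ shifts correctly under the recursion and that the extremal terms at $k=1$ and $k=\lfloor n/2\rfloor+1$ align with the new leading/trailing terms $X_{n+3}^{\langle n+1\rangle}X_0^2$ and $X_{n+2}^{\langle n+2\rangle}X_1$; this is routine once the pattern is identified but requires careful treatment of the endpoints. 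Everything else reduces to the already-established multiplication formulas, so no new structural input is needed.
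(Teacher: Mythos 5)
Your proposal is correct and follows essentially the same route as the paper: induction on $n$ via the recurrence $S_{n+1}(X_\delta)=S_n(X_\delta)X_\delta-S_{n-1}(X_\delta)$, a parity split on $n$, expansion of the products with $X_\delta$ using the already-established multiplication relations, and a telescoping of the Chebyshev tail in which the new top coefficient $\tfrac{n+1}{2}q^{-(n+1)}(q^{-1/2}+q^{1/2})^2S_0$ arises exactly from the constant terms of $X_\delta X_{n+3-2k}^{\langle n+1\rangle}$ — the boundary effect you flag. The only cosmetic differences are that the paper multiplies by $X_\delta$ on the left (immaterial since $S_n(X_\delta)$ commutes with $X_\delta$) and verifies the $n=1$ base case by citing an identity from \cite{cds} rather than rederiving it from Lemma~\ref{lem1}.
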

\begin{proof}
We will prove the lemma by induction on $n$. When $n=1$, by \cite[Lemma~3.6]{cds}, we have that
$$
X_\de=q^{-1}X_3X_{0}^2-q^{-3}X_{2}^2X_1-q^{-2}(q^{-\frac{1}{2}}+q^{\frac{1}{2}})X_{2}^2.
$$

When $n$ is even, by the induction hypothesis,
\begin{align*}
&S_{n+1}(X_\de)=X_\de S_{n}(X_\de)-S_{n-1}(X_\de) \\
=& q^{-n}X_\de X_{n+2}^2X_{0}^2-q^{-(n+2)}X_\de X_{n+1}X_1 -q^{-(n+1)}(q^{-\frac{1}{2}}+q^{\frac{1}{2}}) \sum\limits_{k=1}^{\frac{n}{2}+1}X_\de X_{n+3-2k}\\
&-\sum\limits_{k\geq1}kq^{-2k}(q^{-\frac{1}{2}}+q^{\frac{1}{2}})^2X_\de S_{n-2k}(X_\de) -q^{-(n-1)}X_{n+1}X_{0}^{2}+q^{-(n+1)}X_{n}^2X_1\\
&+q^{-n}(q^{-\frac{1}{2}}+q^{\frac{1}{2}})\sum\limits_{k=1}^{\frac{n}{2}}X_{n+2-2k}^{2} +\sum\limits_{k\geq1}kq^{-2k}(q^{-\frac{1}{2}}+q^{\frac{1}{2}})^2S_{n-1-2k}(X_\de).
\end{align*}
A direct calculation shows that
\begin{align*}
&q^{-n}X_\de X_{n+2}^2X_{0}^2=q^{1-n}X_{n+1}X_{0}^2+q^{-n-1}X_{n+3}X_{0}^2+q^{-n}(q^{-\frac{1}{2}}+q^{\frac{1}{2}})X_{0}^2,\\
&q^{-(n+2)}X_\de X_{n+1}X_1=q^{-(n+1)}X_{n}^2X_1+q^{-(n+3)}X_{n+2}^2X_1,\\
&X_\de X_{n+3-2k}=qX_{n+2-2k}+q^{-1}X_{n+4-2k}.
\end{align*}
Note that
$$
q^{-n}(q^{-\frac{1}{2}}+q^{\frac{1}{2}})\sum\limits_{k=1}^{\frac{n}{2}+1} X_{n+2-2k}^2= q^{-n}(q^{-\frac{1}{2}}+q^{\frac{1}{2}})\sum\limits_{k=1}^{\frac{n}{2}} X_{n+2-2k}^2 +q^{-n}(q^{-\frac{1}{2}}+q^{\frac{1}{2}})X_{0}^{2}.
$$
Thus
\begin{align*}
S_{n+1}(X_\de)
=&q^{-(n+1)}X_{n+3}X_{0}^2-q^{-(n+3)}X_{n+2}^2X_1 -q^{-(n+2)}(q^{-\frac{1}{2}}+q^{\frac{1}{2}})\sum\limits_{k=1}^{\frac{n}{2}+1}X_{n+4-2k}^2 \\
&- \sum\limits_{k\geq1}kq^{-2k}(q^{-\frac{1}{2}}+q^{\frac{1}{2}})^2S_{n+1-2k}(X_\de).
\end{align*}

When $n$ is odd, by the induction hypothesis,
\begin{align*}
S_{n+1}(X_\de)=&X_\de S_{n}(X_\de)-S_{n-1}(X_\de) \\
=& q^{-n}X_\de X_{n+2}X_{0}^2-q^{-(n+2)}X_\de X_{n+1}^2X_1 -q^{-(n+1)}(q^{-\frac{1}{2}}+q^{\frac{1}{2}}) \sum\limits_{k=1}^{\frac{n+1}{2}}X_\de X_{n+3-2k}^2\\
&-\sum\limits_{k\geq1}kq^{-2k}(q^{-\frac{1}{2}}+q^{\frac{1}{2}})^2X_\de S_{n-2k}(X_\de) -q^{-(n-1)}X_{n+1}^2X_{0}^{2}+q^{-(n+1)}X_{n}X_1\\
&+q^{-n}(q^{-\frac{1}{2}}+q^{\frac{1}{2}})\sum\limits_{k=1}^{\frac{n+1}{2}}X_{n+2-2k} +\sum\limits_{k\geq1}kq^{-2k}(q^{-\frac{1}{2}}+q^{\frac{1}{2}})^2S_{n-1-2k}(X_\de).
\end{align*}
We have that
\begin{align*}
&q^{-n}X_\de X_{n+2}X_{0}^2=q^{-(n-1)}X_{n+1}^2X_{0}^{2} +q^{-(n+1)}X_{n+3}^2X_{0}^{2},\\
&q^{-(n+2)}X_\de X_{n+1}^2X_1=q^{-(n+1)}X_nX_1+q^{-(n+3)}X_{n+2}X_1+q^{-(n+2)}(q^{-\frac{1}{2}}+q^{\frac{1}{2}})X_1,\\
&q^{-(n+1)}(q^{-\frac{1}{2}}+q^{\frac{1}{2}})\sum\limits_{k=1}^{\frac{n+1}{2}}X_\de X_{n+3-2k}^2 \\ =&q^{-n}(q^{-\frac{1}{2}}+q^{\frac{1}{2}})\sum\limits_{k=1}^{\frac{n+1}{2}}X_{n+2-2k} +q^{-(n+2)}(q^{-\frac{1}{2}}+q^{\frac{1}{2}})\sum\limits_{k=1}^{\frac{n+1}{2}}X_{n+4-2k} +\frac{n+1}{2}q^{-(n+1)} (q^{-\frac{1}{2}}+q^{\frac{1}{2}})^2
\end{align*}
and
$$
\sum\limits_{k\geq1}kq^{-2k}(q^{-\frac{1}{2}}+q^{\frac{1}{2}})^2\big(X_\de S_{n-2k}(X_\de)-S_{n-1-2k}(X_\de)\big) =\sum\limits_{k=1}^{\frac{n-1}{2}}kq^{-2k}(q^{-\frac{1}{2}}+q^{\frac{1}{2}})^2S_{n+1-2k}(X_\de).
$$
Hence we obtain that
\begin{align*}
S_{n+1}(X_\de)=&q^{-(n+1)}X_{n+3}^2X_{0}^2-q^{-(n+3)}X_{n+2}X_1 -q^{-(n+2)}(q^{-\frac{1}{2}}+q^{\frac{1}{2}})\sum\limits_{k=1}^{\frac{n+3}{2}}X_{n+4-2k}\\ &-\sum\limits_{k\geq1}kq^{-2k}(q^{-\frac{1}{2}}+q^{\frac{1}{2}})^2S_{n+1-2k}(X_\de).
\end{align*}
The proof is completed. % \ref{lemsn}.
\end{proof}

\begin{lemma}\label{lemsn2}
For $n\in\ZZ_{\geq0}$, we have that $C_{(-n,-2n)}=S_n(X_\delta)$.
\end{lemma}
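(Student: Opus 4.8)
The plan is to verify the two defining conditions of the canonical triangular basis from Theorem~\ref{Lusztiglem} for the element $S_n(X_\delta)$ at the point $(-n,-2n)\in\ZZ^2$, namely bar-invariance~\eqref{equlusztiglem1} and the triangularity~\eqref{equlusztiglem2}. Bar-invariance is immediate: $X_\delta$ is bar-invariant by its explicit expression, the Chebyshev polynomial $S_n$ has integer coefficients, and hence $\overline{S_n(X_\delta)}=S_n(X_\delta)$. By the uniqueness part of Lusztig's Lemma it then suffices to show that $S_n(X_\delta)-E_{(-n,-2n)}$ lies in $\bigoplus_{u'\in\ZZ^2}q^{-\frac12}\ZZ[q^{-\frac12}]E_{u'}$; combined with the already-established relations~\eqref{equ4.3},~\eqref{equ4.4} and Proposition~\ref{propstandardmonomial} (which identify $C_{(a,b)}$ for all $(a,b)$ with a nonnegative coordinate or coming from a cluster monomial), this pins down $C_{(-n,-2n)}=S_n(X_\delta)$.

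The heart of the argument is therefore the expansion of $S_n(X_\delta)$ in the standard monomial basis $\{E_{(a,b)}\}$, and this is exactly what Lemma~\ref{lemsn} is designed to deliver. Recall $E_{(a,b)}=q^{-\frac12 ab}X_3^{[-a]_+}X_1^{[a]_+}X_2^{[b]_+}X_0^{[-b]_+}$, so $E_{(-n,-2n)}=q^{-n^2}X_3^{\,n}X_0^{\,2n}$. First I would use Lemma~\ref{lemsn} to write
\begin{align*}
S_n(X_\delta)=&\,q^{-n}X_{n+2}^{\langle n\rangle}X_0^2-q^{-(n+2)}X_{n+1}^{\langle n+1\rangle}X_1
-q^{-(n+1)}(q^{-\frac12}+q^{\frac12})\sum_{k=1}^{\lfloor n/2\rfloor+1}X_{n+3-2k}^{\langle n+1\rangle}\\
&-\sum_{k\geq1}kq^{-2k}(q^{-\frac12}+q^{\frac12})^2S_{n-2k}(X_\delta),
\end{align*}
and then, by an inner induction on $n$, expand each of the four groups of terms into standard monomials. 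The first summand $q^{-n}X_{n+2}^{\langle n\rangle}X_0^2$ is the one that should produce the ``leading'' term $E_{(-n,-2n)}$: using the exchange relations and Proposition~\ref{prop-gene}, one expresses $X_{n+2}$ in the cluster $\{X_0,X_1,X_2,X_3\}$ and checks that the lowest term (with respect to the relevant order) is precisely $q^{-n^2}X_3^{\,n}X_0^{\,2n}=E_{(-n,-2n)}$, while all remaining terms carry a positive power of $q^{-1/2}$. The remaining three groups — involving $X_{n+1}^{\langle n+1\rangle}X_1$, the sum of the $X_{n+3-2k}^{\langle n+1\rangle}$, and the recursive $S_{n-2k}(X_\delta)$ terms — must all be shown to lie in $q^{-\frac12}\A_+$; for the last group this is the induction hypothesis applied to $S_{n-2k}(X_\delta)$ together with the fact that multiplication by $q^{-2k}(q^{-\frac12}+q^{\frac12})^2$ preserves $q^{-\frac12}\A_+$, and for the first two one expands the cluster variables $X_{n+1},X_{n+3-2k}$ in the standard-monomial-adjacent clusters and invokes the sign/degree bookkeeping of Lemma~\ref{lem4.1}, Lemma~\ref{lem4.3+} and Lemma~\ref{lem4.3}.

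The main obstacle I anticipate is the bookkeeping in the last paragraph: one must verify uniformly in $n$ (and with the parity cases $\langle n\rangle=1$ versus $2$ handled separately, as in Lemma~\ref{lemsn}) that after expanding $X_{n+2}^{\langle n\rangle}X_0^2$ exactly one monomial equals $E_{(-n,-2n)}$ with coefficient $1$ and every other monomial — there and in the three correction terms — appears with a coefficient in $q^{-\frac12}\ZZ[q^{-\frac12}]$, so that no cancellation can resurrect a ``$q^0$'' or positive-power-of-$q$ contribution outside the leading term. This is where the detailed $\preceq$-estimates of Lemmas~\ref{lem4.1}--\ref{lem4.3} do the real work, exactly as they did in the proof of Lemma~\ref{lempsi}. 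Once that containment $S_n(X_\delta)-E_{(-n,-2n)}\in q^{-\frac12}\A_+$ is in hand, Lusztig's Lemma gives $C_{(-n,-2n)}=S_n(X_\delta)$ for all $n\in\ZZ_{\geq0}$ (the case $n=0$ being $C_{(0,0)}=1=S_0(X_\delta)$), completing the proof.
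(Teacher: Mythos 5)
Your overall strategy is the same as the paper's: feed the expansion of Lemma \ref{lemsn} into the uniqueness part of Lusztig's Lemma, use bar-invariance of $S_n(X_\delta)$ (which you justify correctly), and induct on $n$ so that the terms $kq^{-2k}(q^{-\frac{1}{2}}+q^{\frac{1}{2}})^2S_{n-2k}(X_\delta)$ land in $q^{-\frac{1}{2}}\A_+$ by hypothesis. The whole proof therefore reduces, exactly as you say, to showing $S_n(X_\delta)-E_{(-n,-2n)}\in q^{-\frac{1}{2}}\A_+$, with the leading term coming from $q^{-n}X_{n+2}^{\langle n\rangle}X_0^2$.

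The gap is in how you propose to establish that containment for the three non-recursive groups of terms. You want to ``express $X_{n+2}$ in the cluster $\{X_0,X_1,X_2,X_3\}$ using the exchange relations and Proposition \ref{prop-gene}'' and then check that the lowest term is $E_{(-n,-2n)}$ while every other coefficient lies in $q^{-\frac{1}{2}}\ZZ[q^{-\frac{1}{2}}]$. Proposition \ref{prop-gene} only asserts generation and gives no control whatsoever over coefficients, so this claim is precisely the nontrivial content of the step and is left unproved; carrying it out directly would amount to re-deriving, uniformly in $n$, the standard-monomial expansions of arbitrary powers of cluster variables. The paper closes this by a different and essentially free observation: by Proposition \ref{propstandardmonomial} the elements $X_{n+2}^{\langle n\rangle}$, $X_{n+1}^{\langle n+1\rangle}$ and $X_{n+3-2k}^{\langle n+1\rangle}$ are already triangular basis elements $C_{(\cdot,\cdot)}$ (e.g.\ $X_{n+2}^{\langle n\rangle}=C_{(-n,2-2n)}$), so the triangularity condition \eqref{equlusztiglem3} gives $X_{n+2}^{\langle n\rangle}\in E_{(-n,2-2n)}+q^{-\frac{1}{2}}\A_+$ with no computation; Lemma \ref{lem4.1}(1) and (3) then control right multiplication by $X_0$ and $X_1$, and the leading term drops out of the one-line identity $q^{-n}E_{(-n,2-2n)}X_0^2=E_{(-n,-2n)}$ coming from \eqref{equ4.1.1iii}, while the $X_{n+1}^{\langle n+1\rangle}X_1$ and $X_{n+3-2k}^{\langle n+1\rangle}$ groups are pushed into $q^{-\frac{1}{2}}\A_+$ by the explicit prefactors $q^{-(n+2)}$ and $q^{-(n+1)}$. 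If you replace your direct-expansion step by this appeal to Proposition \ref{propstandardmonomial} together with \eqref{equlusztiglem3} (and supply the base case $n=1$, which the paper gets from Theorem \ref{theorem2} via $E_{(-1,-2)}=X_\delta+(q^{-\frac{5}{2}}+q^{-\frac{3}{2}})E_{(0,2)}+q^{-4}E_{(1,2)}$), your argument becomes the paper's; as written, the key containment is asserted rather than proved.
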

\begin{proof}
We will prove the statement by induction on $n$. It is trivial for $n=0$. For $n=1$, by Theorem \ref{theorem2}, we have that
$$
E_{(-1,-2)}=X_\de+(q^{-\frac{5}{2}}+q^{-\frac{3}{2}})E_{(0,2)}+q^{-4}E_{(1,2)}.
$$
Therefore $C_{(-1,-2)}=X_\de$. Assume that $C_{(-r,-2r)}=S_r(X_\de)$ for $0\leq r\leq n$. When $n$ is even, we have that
\begin{align*}
S_n(X_\de)=&q^{-n}X_{n+2}^{2}X^{2}_0-q^{-(n+2)}X_{n+1}X_1 -q^{-(n+1)}(q^{-\frac{1}{2}}+q^{\frac{1}{2}})\sum\limits_{k=1}^{\frac{n}{2}+1}X_{n+3-2k} \\ &-\sum\limits_{k\geq1}kq^{-2k}(q^{-\frac{1}{2}}+q^{\frac{1}{2}})^{2}S_{n-2k}(X_\delta).
\end{align*}
By Proposition \ref{propstandardmonomial}, we know that $X_{n+2}^{2}=C_{2\alpha(n+2)}=C_{(-n,2-2n)}$ and $X_{n+1}=C_{\alpha(n+1)}=C_{(1-n,4-2n)}$. By using Lemma \ref{lem4.1}(1), (3) and the condition (\ref{equlusztiglem3}) in Lusztig's Lemma, we obtain that
$$
q^{-n}C_{(-n,2-2n)}X_{0}^2\in q^{-n}E_{(-n,2-2n)}X_{0}^2+q^{-\frac{1}{2}}\A_+
$$
and
$$
q^{-(n+2)}C_{(1-n,4-2n)}X_1\in q^{-(n+2)}E_{(1-n,4-2n)}X_{1}+q^{-\frac{1}{2}}\A_+.
$$
For $n\geq 2$, $q^{-n}E_{(-n,2-2n)}X_{0}^{2}=q^{-\frac{n}{2}}(q^{-\frac{n}{2}}E_{(-n,2-2n)}X_0)X_0 =q^{-\frac{n}{2}}E_{(-n,1-2n)}X_0=E_{(-n,-2n)}$ and $q^{-(n+2)}E_{(1-n,4-2n)}X_1\in q^{-3}(E_{(2-n,4-2n)} +q^{-\frac{1}{2}}\A_+)\subseteq q^{-\frac{1}{2}}\A_+$ by (\ref{equ4.1.1iii}) and Lemma \ref{lem4.1}(3). Hence we have that
$$
q^{-n}X_{n+2}^{2}X^{2}_{0}=q^{-n}C_{(-n,2-2n)}X^{2}_0\in E_{(-n,-2n)}+q^{-\frac{1}{2}}\A_+
$$
and
$$
q^{-(n+2)}X_{n+1}X_1=q^{-(n+2)}C_{(-n,4-2n)}X_1\in q^{-\frac{1}{2}}\A_+.
$$
For $1\leq k\leq \frac{n}{2}+1$, it is clearly that
\begin{align*}
&q^{-(n+1)}(q^{-\frac{1}{2}}+q^{\frac{1}{2}})X_{n+3-2k}\\
=&q^{-(n+1)}(q^{-\frac{1}{2}}+q^{\frac{1}{2}})C_{\alpha(n+3-2k)}\in q^{-(n+1)}(q^{-\frac{1}{2}}+q^{\frac{1}{2}})(E_{(2k-n-1,4k-2n)}+q^{-\frac{1}{2}}\A_+)\subseteq q^{-\frac{1}{2}}\A_+.
\end{align*}
By using the induction hypothesis, we have that $q^{-2k}(q^{-\frac{1}{2}}+q^{\frac{1}{2}})^{2}S_{n-2k}(X_\de)\in q^{-\frac{1}{2}}A_+$ for $1\leq k\leq\frac{n}{2}$. When $n$ is even, we have that
$$
S_{n}(X_\de)\in E_{(-n,-2n)}+q^{-\frac{1}{2}}\A_+.
$$

When $n$ is odd, we have that
\begin{align*}
S_n(X_\de)=&q^{-n}X_{n+2}X^{2}_0-q^{-(n+2)}X_{n+1}^{2}X_1-q^{-(n+1)}(q^{-\frac{1}{2}}+q^{\frac{1}{2}}) \sum\limits_{k=1}^{\frac{n+1}{2}}X_{n+3-2k}^2 \\
&-\sum\limits_{k\geq1}kq^{-2k}(q^{-\frac{1}{2}}+q^{\frac{1}{2}})^{2}S_{n-2k}(X_\de),
\end{align*}
$X_{n+2}=C_{\alpha(n+2)}=C_{(-n,2-2n)}$, $X_{n+1}^2=C_{2\alpha(n+2)}=C_{(1-n,4-2n)}$
 and
$
X^{2}_{n+3-2k}=C_{2\alpha(n+3-2k)}\\ =C_{(2k-n-1,4k-2n)}
$
for $1\leq k\leq \frac{n+1}{2}$. The rest of the proof is quite similar to that given above for the case of $n$ being even and so is omitted.

Since $S_n(X_\de)$ is bar-invariant, we are led to the conclusion that $C_{(-n,-2n)}=S_n(X_\de)$ for $n\in\ZZ_{\geq0}$.
\end{proof}

Proposition \ref{propstandardmonomial} and Lemma \ref{lemsn2} imply the following theorem.
\begin{theorem}
The basis $\mathcal{S}=\{q^{-\frac{1}{2}ab}X^{a}_{m}X^{b}_{m+1}~|~m\in\ZZ,(a,b)\in\ZZ^{2}_{\geq0}\}\cup \{S_{n}(X_\delta)\}$ is the triangular basis in $\A_{q}(1,4)$.
\end{theorem}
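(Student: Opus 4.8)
The plan is to show that $\mathcal{S}$ coincides with the family $\{C_u\mid u\in\ZZ^2\}$ produced by Lusztig's Lemma (Theorem~\ref{Lusztiglem}), since by definition the latter is \emph{the} triangular basis of $\A_q(1,4)$ once the triangularity hypothesis \eqref{conditionlusztig} has been verified for the order $\prec$ (this was recorded immediately after \eqref{partialorder}). So everything reduces to computing $C_u$ for every $u\in\ZZ^2$ and then checking that these elements reassemble $\mathcal{S}$ exactly.

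First I would record the tiling of $\ZZ^2$ underlying the rank-two cluster combinatorics of type $A_2^{(2)}$. Reading off the explicit vectors $\alpha(n)$ from their definition, one checks that the rays $\ZZ_{\geq0}\alpha(n)$, $n\in\ZZ$, rotate strictly monotonically with $n$ and that their unique accumulation direction is $\ZZ_{\geq0}(-1,-2)$; up to the sign conventions of the triangular-basis indexing these $\alpha(n)$ are the denominator vectors of the cluster variables $X_n$. Hence
\[
\ZZ^2=\Bigl(\bigcup_{n\in\ZZ}\{\,a_1\alpha(n)+a_2\alpha(n+1)\mid a_1,a_2\in\ZZ_{\geq0}\,\}\Bigr)\ \cup\ \{\,(-n,-2n)\mid n\in\ZZ_{>0}\,\},
\]
where the ray $\{(-n,-2n)\mid n\in\ZZ_{>0}\}$ is disjoint from the first part, two consecutive cones overlap only along the common ray $\ZZ_{\geq0}\alpha(n)$, and on such a ray the two parametrisations yield the same lattice point and the same cluster monomial. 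This is the affine rank-two instance of \cite[Proposition~3.1]{SZ}. In particular the cluster monomials $q^{-\frac{1}{2}ab}X_m^aX_{m+1}^b$ ($m\in\ZZ$, $a,b\in\ZZ_{\geq0}$) are indexed bijectively by $\ZZ^2\setminus\{(-n,-2n)\mid n\geq1\}$, with $(0,0)$ corresponding to the constant $1=S_0(X_\delta)$.

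Then I would invoke the two preceding results. Proposition~\ref{propstandardmonomial} gives $C_{a_1\alpha(n)+a_2\alpha(n+1)}=q^{-\frac{1}{2}a_1a_2}X_n^{a_1}X_{n+1}^{a_2}$ for all $n\in\ZZ$ and $a_1,a_2\in\ZZ_{\geq0}$, so $C_u$ is the corresponding cluster monomial whenever $u$ lies in the first part of the decomposition; Lemma~\ref{lemsn2} gives $C_{(-n,-2n)}=S_n(X_\delta)$ for all $n\in\ZZ_{\geq0}$. Combining these with the tiling yields $\{C_u\mid u\in\ZZ^2\}=\{q^{-\frac{1}{2}ab}X_m^aX_{m+1}^b\mid m\in\ZZ,(a,b)\in\ZZ^2_{\geq0}\}\cup\{S_n(X_\delta)\mid n\geq1\}=\mathcal{S}$, which is the assertion. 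The only step that is not a direct quotation of an earlier result is the verification of the tiling; it is routine but must be carried out carefully on the explicitly listed $\alpha(n)$ (or transcribed from the $A_2^{(2)}$ case of \cite{SZ}), and I expect this bookkeeping, rather than any new algebra, to be the main point to get right.
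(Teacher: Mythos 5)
Your argument is exactly the paper's: the theorem is stated there with the one-line justification that Proposition~\ref{propstandardmonomial} and Lemma~\ref{lemsn2} imply it, and your proposal simply makes explicit the (correct) bookkeeping that the cones $\{a_1\alpha(n)+a_2\alpha(n+1)\}$ together with the ray $\{(-n,-2n)\mid n>0\}$ tile $\ZZ^2$, so that $\{C_u\mid u\in\ZZ^2\}=\mathcal{S}$. No discrepancy in approach; your version just spells out a step the paper leaves implicit.
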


\section*{Acknowledgments}

The first draft of the paper was written during X. Chen's visit at Nankai University from May 19 to June 18, 2017. He thanks Nankai University for the hospitality
and for creating ideal working environment. We appreciate Prof. Fan Qin's comments on the relation between the bases obtained in our paper and some other existed bases.


\begin{thebibliography}{99}

\bibitem{BCDX}
L. Bai, X. Chen, M. Ding and F. Xu.
\emph{A quantum analogue of  generalized cluster algebras.}
Algebr. Represent. Theory (2017), accepted. https://doi.org/10.1007/s10468-017-9743-7

\bibitem{BMRRT} A. Buan, R. Marsh, M. Reineke, I. Reiten and G. Todorov.
\emph{Tilting theory and cluster combinatorics.}
Adv. Math. \textbf{204} (2006), 572--618.


\bibitem{BZ2005}
A. Berenstein and A. Zelevinsky. \emph{Quantum cluster algebras}.
Adv. Math. \textbf{195} (2005), no. 2, 405--455.


\bibitem{BZ2014}
A. Berenstein and A. Zelevinsky. \emph{Triangular bases in quantum cluster algebras.}
Int. Math. Res. Not. IMRN 2014, no. \textbf{6}, 1651--1688.


%\bibitem{BZ2012}
%A. Berenstein and A. Zelevinsky. \emph{Triangular bases in quantum cluster algebras}. Int. Math. Res. Not. IMRN 2014, no. 6, 1651--1688.

\bibitem{CC}
P. Caldero and F. Chapoton.
\emph{Cluster algebras as Hall algebras of quiver representations.}
Comment. Math. Helv. \textbf{81} (2006), 596--616.


\bibitem{cds}
X. Chen, M. Ding and J. Sheng.
\emph{Bar-invariant bases of the quantum cluster algebra of type $A_{2}^{(2)}$.}
Czechoslovak Math. J. \textbf{61}(136) (2011), no. 4, 1077--1090.


\bibitem{CK}
P. Caldero and B. Keller.
\emph{From triangulated categories to cluster algebras.}
Invent. Math. \textbf{172} (2008), 169--211.


\bibitem{CK-2}
P. Caldero and B. Keller. \emph{From triangulated categories to cluster algebras II.}
Ann. Sci. {\'E}cole Norm. Sup. (4) \textbf{39} (2006), no. 6, 983--1009.


\bibitem{calzel}
P. Caldero and A. Zelevinsky. \emph{Laurent expansions in cluster algebras via quiver representations.}
Mosc. Math. J. \textbf{6} (2006), no. 3, 411--429, 587.

\bibitem{Davison}
B.~Davison. \emph{Positivity for quantum cluster algebras.}
Ann. of Math. (2) \textbf{187} (2018), no. 1, 157--219.


\bibitem{DMSS}
B. Davison, D. Maulik, J. Sch\"{u}rmann and B. Szendr\"{o}i.
\emph{Purity for graded potentials and quantum cluster positivity.}
Compos. Math. \textbf{151} (2015), no. 10, 1913--1944.


\bibitem{DWZ}
H. ~Derksen, J. Weyman and A. Zelevinsky.
\emph{Quivers with potentials and their representations II: applications to cluster algebras.}
J. Amer. Math. Soc. \textbf{23} (2010), no. 3, 749--790.


\bibitem{dx}
M. Ding and F. Xu.
\emph{Bases of the quantum cluster algebra of the Kronecker quiver.}
Acta Math. Sin. (Engl. Ser.) \textbf{28} (2012), no. 6, 1169--1178.


\bibitem{dx-2}
M. Ding and F. Xu.
\emph{A quantum analogue of generic bases for affine cluster algebras.}
Sci. China Math. \textbf{55} (2012), no. 10, 2045--2066.


\bibitem{DXX}
M. Ding, J. Xiao and F. Xu.
\emph{Integral bases of cluster algebras and representations of tame quivers.}
Algebr. Represent. Theory. \textbf{16} (2013),  no. 2, 491--525.


%\bibitem{Dup}
%G.~Dupont, \emph{Generic variables in acyclic cluster algebras and
%bases in affine cluster algebras}, arXiv:0811.2909.


\bibitem{ca1}
S. Fomin and A. Zelevinsky.
\emph{Cluster algebras. I. Foundations.}
J. Amer. Math. Soc. \textbf{15} (2002), no. 2, 497--529.


\bibitem{ca2}
S. Fomin and A. Zelevinsky.
\emph{Cluster algebras. II. Finite type classification.}
Invent. Math. \textbf{154} (2003), no. 1, 63--121.


\bibitem{Hubery2005}
A. Hubery.
\emph{Acyclic cluster algebras via Ringel-Hall algebras}, preprint.


\bibitem{KQ}
Y. Kimura and F. Qin.
\emph{Graded quiver varieties, quantum cluster algebras and dual canonical basis.}
Adv. Math. \textbf{262} (2014), 261--312.


\bibitem{LLRZ}
K. Lee, L. Li, D. Rupel and A. Zelevinsky.
\emph{Greedy bases in rank 2 quantum cluster algebras.}
Proc. Natl. Acad. Sci. USA \textbf{111} (2014), no. 27, 9712--9716.


%\bibitem{Palu1}
%Y. Palu. \emph{Cluster characters for 2-Calabi-Yau triangulated categories.} Ann. Inst. Fourier (Grenoble) 58 (2008), no. 6, 2221¨C-2248.

\bibitem{Palu2}
Y. Palu.
\emph{Cluster characters II: a multiplication formula.}
Proc. Lond. Math. Soc. (3) \textbf{104} (2012), no. 1, 57--78.


\bibitem{fanqin}
F. Qin.
\emph{Quantum cluster variables via Serre polynomials. With an appendix by Bernhard Keller.}
J. Reine Angew. Math. \textbf{668} (2012), 149--190.


\bibitem{fanqin2}
F. Qin.
\emph{Triangular bases in quantum cluster algebras and monoidal categorification conjectures.}
Duke Math. J. \textbf{166} (2017), no. 12, 2337--2442.

\bibitem{fanqin3}
F. Qin.
\emph{Compare triangular bases of acyclic quantum cluster algebras,} arXiv:1606.05604v2 [math.QA].


%\bibitem{MRZ} R. Marsh, M. Reineke,
%and A. Zelevinsky. \emph{Generalized associahedra via quiver
%representations.} Trans. Amer. Math. Soc. \textbf{355} (2003), 4171--4186.

\bibitem{rupel}
D. Rupel. \emph{On quantum analogue of the Caldero-Chapoton Formula.}
Int. Math. Res. Not. IMRN 2011, no. \textbf{14}, 3207--3236.

\bibitem{SZ}
P. Sherman and A. Zelevinsky.
\emph{Positivity and canonical bases in rank $2$ cluster algebras of finite and affine types.}
Mosc. Math. J. \textbf{4} (2004), no. 4, 947--974, 982.


\bibitem{XiaoXu}
J. Xiao and F. Xu.
\emph{Green's formula with $\mathbb{C}^{*}$-action and Caldero-Keller's formula for cluster algebras.}
Representation theory of algebraic groups and quantum groups, 313--348, Progr. Math., \textbf{284}, Birkhauser/Springer, New York, 2010.



\end{thebibliography}
\end{document}